  \protected@write\@auxout{}{%
    \string\@restatetheorem{#1}{#2}{\csname the#1\endcsname}{\detokenize\expandafter{\BODY}}%
  }%
\newcommand{\@restatetheorem}[4]{%
  \expandafter\gdef\csname beginrestatethis@#2\endcsname{%
    \begingroup
    \expandafter\def\csname the#1\endcsname{#3}%
    \begin{#1}%
  }
  \expandafter\gdef\csname bodyrestatethis@#2\endcsname{#4}%
  \expandafter\gdef\csname endrestatethis@#2\endcsname{%
    \end{#1}%
    \endgroup
  }
}
\newcommand{\restate}[1]{%
  \begingroup
  \@skiphyperreftrue 
  \let\label\@gobble 
  \csname beginrestatethis@#1\endcsname%
  \csname bodyrestatethis@#1\endcsname%
  \csname endrestatethis@#1\endcsname%
  \endgroup
  \addtocounter{thm}{-1}
}
\declaretheorem[name=Theorem, numberwithin=section]{thm}
\declaretheorem[name=Lemma, sibling=thm]{lem}
\crefname{lem}{lemma}{lemmas}
\declaretheorem[name=Proposition, sibling=thm]{prop}
\declaretheorem[name=Corollary, sibling=thm]{cor}
\declaretheorem[name=Not A Corollary, sibling=thm]{notcor}
\theoremstyle{definition}
\declaretheorem[name=Definition, sibling=thm]{defn}
\renewcommand\subsection{\@startsection{subsection}{2}%
  \z@{-.5\linespacing\@plus-.7\linespacing}{.5\linespacing}%
  {\normalfont\scshape}}
\begin{document}
\title[An Orientation Map for Height $p-1$ Real $E$ Theory]{An Orientation Map for Height $\boldsymbol{p-1}$ Real $\boldsymbol{E}$ Theory}
\author{Hood Chatham}
\date{\today}
\email{hood@mit.edu}

\begin{abstract}
\let\mylangle\mysmalllangle
\let\myrangle\mysmallrangle

Let $p$ be an odd prime and let $\EO = E_{p-1}^{hC_p}$ be the $C_p$ fixed points of height $p-1$ Morava $E$ theory.
We say that a spectrum $X$ has algebraic $\EO$ theory if the splitting of $K_*(X)$ as an $K_*[C_p]$-module lifts to a topological
splitting of $\EO \sm X$.   We develop criteria to show that a spectrum has algebraic $\EO$ theory, in particular showing that any
connective spectrum with mod $p$ homology concentrated in degrees $2k(p - 1)$ has algebraic $\EO$ theory.  As an application, we answer a question posed by Hovey and Ravenel \cite{hovey-ravenel} by producing a unital orientation $\MY_{4p-4}\to \EO$ analogous to the $\MSU$ orientation of $\KO$ at $p=2$.
\end{abstract}
\maketitle

\section{Introduction}
Let $E$ be a spectrum equipped with a unit map $S^0\to E$. A sphere bundle $V\colon Z\to \BGLS$ has a Thom spectrum $\Th(V)$ which comes with a unit map $S^0\to \Th(V)$. An $E$-orientation of the bundle $V$ is a choice of unital map $\Th(V)\to E$. If $V$ can be written as a pullback of a sphere bundle $W\colon Y\to \BGLS$, then there is a natural unital map $\Th(V)\to \Th(W)$ so an $E$-orientation of $W$ restricts to an $E$-orientation of $V$.

One strategy to understand $E$-orientations of bundles is to find an $E$-orientable bundle that is as universal as possible. We can then show that some other bundle is $E$-orientable by expressing it as the pullback of this ``universal'' orientable bundle. For instance, the map $\BSU\to \BGLS$ is $\KO$-orientable, so any bundle $V\colon Z\to \BGLS$ that factors through the map $\BSU\to \BGLS$ is orientable. This means that any sphere bundle that comes from a complex vector bundle with vanishing first Chern class is $\KO$-orientable.  Similarly, the map $\BU[6]\to \BGLS$ is $\TMF$-orientable so any sphere bundle that comes from a complex vector bundle with vanishing first two Chern classes is $\TMF$-orientable. The localizations $L_{K(1)}\KO$ and $L_{K(2)}\TMF$ are the $p=2$ and $p=3$ cases of a family of cohomology theories called higher real $E$-theories $\EO_{p-1}$. Since $\BSU = \BU[4]$ is the $4$-connective cover of $\BU$ and $\BU[6]$ is the $6$-connective cover of $\BU$, it is natural to guess that there might be an $\EO$-orientation of $\BU[2p]$. However, the standard map $\BU[2p]\to \BGLS$ is not $\EO$-orientable when $p>3$ according to an observation of Hovey \cite[Proposition 2.3.2]{hovey}.

We prove that the canonical bundle over the Wilson space $Y_{4p-4}$ is $\EO$-orientable. The Wilson space $Y_{2k}$ is obtained by starting with a $p$-local even dimensional sphere and attaching even cells to kill odd homotopy classes \cite{wilson}. The resulting spaces have even homotopy groups and torsion free even integral homology groups. Each Wilson space is an infinite loop space of $\BPn<n>$ for some appropriate $n$, for instance $Y_{4p-4}=\OS{\BPtw}{4p-4}$ is the $(4p-4)$th loop space of $\BPtw$ \cite{wilson}. The space $\BU[2p]$ has an Adams splitting
\begin{align*}
\BU[2p]
&\simeq \OS{\BPo}{2p}\times\cdots \times \OS{\BPo}{4p-4}\\
&= Y_{2p}\times Y_{2p+2}\times \OS{\BPo}{2p+4} \times \cdots \times \OS{\BPo}{4p-4}
\end{align*}
$\BU[2p]$ does not have even cohomology because $\OS{\BPo}{2k}$ doesn't have even cohomology when $k > p + 1$. We think of the Wilson space $Y_{4p-4}=\OS{\BPtw}{4p-4}$ as an even replacement for $\OS{\BPo}{4p-4}$. Hovey and Ravenel \cite{hovey-ravenel} computed the Adams Novikov spectral sequence for the Thom spectrum $MY_{4p-4}$ of the standard map $Y_{4p-4}\to BU$ through a range and observed that it looked like several copies of the homotopy fixed point spectral sequence for $\EO$. Because of this, they asked whether there could be a unital orientation map $MY_{4p-4}\to \EO$. We answer their question by showing that such a map exists:
\begin{restatethis}{thm}{4p-4_orientation}
Let $f\colon Y_{4p-4}\to \BGLS$ be any map. There is an equivalence $\EO\sm \Mf\simeq \EO\sm Y_{(4p-4)+}$ of $\EO$-modules, so there is a map of spectra $\Mf\to \EO$ which factors the unit map $S^0\to \EO$.
\end{restatethis}

As a replacement for an orientation map $\MU[2p]\to \EO$ we obtain an orientation map $M\OS{\MU}{2p}\to \EO$ (\Cref{MU2p-orientation}).

Our goal is to prove that certain bundles are $\EO$-orientable. Characteristic classes determine an easily computed obstruction to orientability. Given a cohomology theory $E$ and a space $Z$ we say that $E$-orientability of complex bundles over $Z$ is \emph{Chern determined} if the condition that $V$ is an $E$-orientable bundle over $Z$ is equivalent to some algebraic congruences on the Chern classes $c_i(V)\in H^{2i}(Z)$. If $E$-orientability of bundles over $Z$ is Chern determined we can easily determine which bundles over $Z$ are $E$-orientable.

Consider the case $E=\KO$. The mod $2$ reduction of the first Chern class $c_1(V)\in H^2(Z)$ determines the $\eta$ attaching map into the zero cell in $\Th(V)$. Since the zero cell is split in $\susp^{\infty}_+Z$ and $\eta$ is detected in $\KO_*$, a necessary condition for a bundle $V$ to be $\KO$-orientable is that $c_1(V)= 0\pmod{2}$. This is the only obstruction to $\KO$ orientability visible to Chern classes so a space $Z$ has Chern-determined $\KO$-orientability if every bundle $V$ over $Z$ such that $c_1(V)= 0\pmod{2}$ is $\KO$-orientable. An application of a theorem of Bousfield (\Cref{even-KO-module-splitting}) implies that any even space has Chern-determined $\KO$-orientability. The space $\BSU$ is even and $4$-connected, so this implies that every complex vector bundle over $\BSU$ is $\KO$-orientable. This proves \cref{4p-4_orientation} in the case that $p=2$ and $f$ factors through $\BU$.

In the odd prime case we have analogously that $\alpha_1\in \pi_{2p-3}(\EO)$ is nonzero. The $\alpha_1$ attaching maps in a space $Z$ are detected by the $P^1$ action on the mod $p$ cohomology. This implies that if a bundle $V$ over $Z$ is $\EO$-orientable, we must have $P^1(u)=0$ where $u$ is the Thom class of $V$ in $\HFp_*(Z)$. In the case of the universal bundle over $\BU$, $P^1(u)=\overline{\psi}_{p-1}u$ where $\overline{\psi}_{p-1}$ is the $(p-1)$st power sum characteristic class reduced mod $p$. Therefore, if $V$ is orientable then $\psi_{p-1}(V)\in H^{2p-2}(\BU)$ must be divisible by $p$. Analogously to the case when $p=2$, this is the only obstruction to orientability visibile to Chern classes so a space $Z$ has Chern-determined $\EO$-orientability if every bundle $V$ over $Z$ with $\psi_{p-1}(V)=0\pmod{p}$ is $\EO$-orientable. We show that every space with cohomology concentrated in degrees divisible by $2p-2$ has Chern-determined $\EO$-orientability. In particular, $Y_{4p-4}$ satisfies this sparsity condition and is sufficiently connective that $\psi_{p-1}$ lives in a zero group. This implies the odd prime case of \cref{4p-4_orientation} when $f$ factors through $\BU$. The case when $f$ is a general sphere bundle requires a bit of extra care with terminology but is fundamentally the same.

\subsection*{Background}
Fix an odd prime $p$. All spectra are implicitly $p$-completed. Let $E=E_{p-1}$ be the Morava $E$-theory corresponding to the Honda formal group law of height $p-1$ over $\F_{p^{p-1}}$. Let $\m$ be the maximal ideal of $E_*$ and let $K_* = E_*/\m = \F_{p^{p-1}}[u^{\pm}]$. The Morava stabilizer group at height $p-1$ contains elements of order $p$. Let $G$ be a maximal finite subgroup of $\G$ containing some element of order $p$. Such a subgroup is unique up to conjugacy. Let $\EO = E^{hG}$. For an $\EO$-module $M$ write $\EEO_*(M) = \pi_*(E\sm_{\EO} M)$. A more detailed review of the facts that we need about the Morava stabilizer group appears at the beginning of \Cref{sec-E_*X_l}. Bujard \cite{finite-subgroups} has completely classified finite subgroups of the Morava stabilizer group.

\def\nonhurewiczcolor{nofill}
\def\hurewiczcolor{yesfill}
\sseqset{nofill/.style={fill=none},yesfill/.style={fill=black}}

\sseqset{Zclass/.style={rectangle}, pZclass/.style={rectangle,fill=none}}
\DeclareSseqCommand\betaclass {d()} {
    \IfNoValueF{#1}{\pushstack(#1)}
    \class[\hurewiczcolor](\lastx+10,\lasty+2)
    \structline[\hurewiczcolor]
}

\begin{sseqdata}[
    name=EO_3,
    y range={0}{8},x range={0}{72},
    x tick step=5, y tick step=2,
    classes={fill,inner sep=1pt},
    x grid step=5,
    y grid step=2,
    class labels={black,pin},
    classes=\nonhurewiczcolor,struct lines=\nonhurewiczcolor,
    width=\textwidth-15pt,
    height=\textheight/3-25pt,
    title={$p=3$}
    ]

\begin{scope}[classes=\hurewiczcolor,struct lines=\hurewiczcolor]
\class[Zclass](0,0)
\class["\alpha_1" above](3,1)
\structline
\class["\beta_1" below](10,2)
\structline(\lastclass2)
\class(13,3)
\structline
\structline(\lastclass2)
\class(20,4)
\structline(\lastclass2)
\class(30,6)
\structline
\class(40,8)
\structline
\class(37,3)
\structline
\end{scope}

\class(27,1)
\structline
\structline(30,6)

\class[pZclass](24,0)
\class[pZclass](48,0)
\end{sseqdata}

\DeclareSseqCommand\betaclass {d()} {
    \IfNoValueF{#1}{\pushstack(#1)}
    \class[\hurewiczcolor](\lastx+38,\lasty+2)
    \structline[\hurewiczcolor]
}

\let\alphastructstyle\empty
\NewSseqCommand\betaclasspair {ss} {
    \IfBooleanTF{#1}{
        \let\bottomcolor\hurewiczcolor
    }{
        \let\bottomcolor\empty
    }
    \IfBooleanTF{#2}{
        \let\bottomcolortwo\hurewiczcolor
    }{
        \let\bottomcolortwo\empty
    }
    \class[\hurewiczcolor](\lastx1+38,\lasty1+2)
    \structline[\hurewiczcolor](\lastclass2)
    \class[\bottomcolor](\lastx1+38,\lasty1+2)
    \structline[\bottomcolor,\alphastructstyle]
    \structline[\bottomcolortwo](\lastclass2)
}

\NewSseqCommand\triplebetapair {} {
    \betaclasspair
    \betaclasspair
    \betaclasspair
}

\begin{sseqdata}[
    name=EO_5,
    y range={0}{32},x range={0}{650},
    xscale=0.045,
    yscale=0.4, x tick step=100, y tick step=5,
    classes={fill,inner sep=1pt},
    x grid step=50,
    y grid step=2,
    classes=\nonhurewiczcolor,struct lines=\nonhurewiczcolor,
    class labels={black,pin},
    width=\textwidth-15pt,
    height=2*\textheight/3-60pt,
    title={$p=5$}
    ]

\begin{scope}[classes=\hurewiczcolor,struct lines=\hurewiczcolor]
\class[Zclass](0,0)
\class(7,1)
\structline
\betaclasspair
\betaclasspair
\betaclasspair
\betaclass(\lastclass1)

\betaclass
\end{scope}
\def\alphastructstyle{densely dashed}
\class(167,1)
\structline[densely dashed]
\betaclasspair*
\betaclasspair**
\betaclasspair**

\betaclass(\lastclass1)
\class(327,1)
\structline[densely dashed]
\betaclasspair
\betaclasspair*
\betaclasspair**
\let\alphastructstyle\empty

\betaclass(\lastclass1)
\class(487,1)
\structline
\betaclasspair
\betaclasspair
\betaclasspair*

\foreach \v in {1,...,4}{
    \class[pZclass](160*\v,0)
}

\classoptions["\alpha_1" above](7,1)
\classoptions["\beta_1" below](38,2)
%
\end{sseqdata} 
\afterpage{
    \clearpage
    \raggedbottom
    \begin{figure}[H]
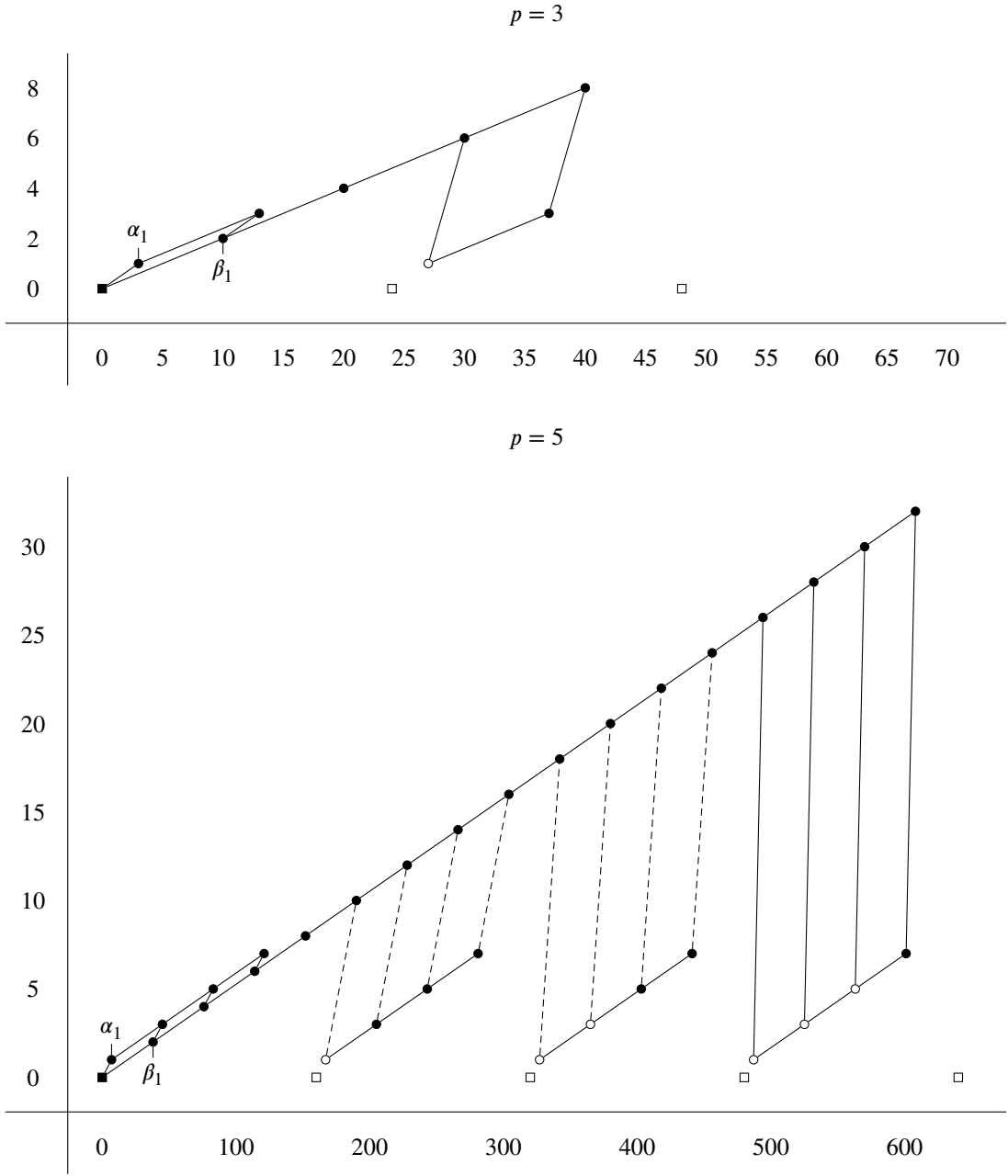

    \printpage[name=EO_3]
    \vskip13pt
    \printpage[name=EO_5]
    \caption{The homotopy of $\EO$ at the primes 3 and 5. The $y$-axis is the homotopy fixed point filtration. Most classes in filtration 0 are omitted.  The lines indicate $\alpha$ and $\beta$ multiplications, the dashes lines when $p=5$ indicate Toda brackets $\toda{\alpha_1,\alpha_1,-}$ or $\toda{\alpha_1,\alpha_1,\alpha_1,-}$. The periodicity for $p=3$ is $72$ and for $p=5$ is $800$.  The Hurewicz image classes are solid, the remaining classes are open.}
    \label{fig:homotopy-of-EO}
    \end{figure}
    \clearpage
}
\makeatletter
\immediate\write\@auxout{\unexpanded{
    \newlabel{fig:homotopy-of-EO}{{1}{3}{The homotopy of $\EO $ at the primes 3 and 5. The $y$-axis is the homotopy fixed point filtration. Most classes in filtration 0 are omitted. The lines indicate $\alpha $ and $\beta $ multiplications, the dashes lines when $p=5$ indicate Toda brackets $\toda {\alpha _1,\alpha _1,-}$ or $\toda {\alpha _1,\alpha _1,\alpha _1,-}$. The periodicity for $p=3$ is $72$ and for $p=5$ is $800$. The Hurewicz image classes are solid, the remaining classes are open}{figure.1}{}}
}}
\immediate\write\@auxout{\unexpanded{\newlabel{fig:homotopy-of-EO@cref}{{[figure][1][]1}{[1][3][]3}}}}
\makeatother

Hopkins and Miller computed the homotopy fixed point spectral sequence $H^*_{G}(E_*)\Rightarrow \EO_*$ up to some permanent cycles on the zero line. The homotopy of $\EO_*$ for $p=3$ and $p=5$ is illustrated in \Cref{fig:homotopy-of-EO}. We review the facts we need about this spectral sequence in \Cref{subsec:review-of-EO*}. A more detailed description appears in section 2 of \cite{nave}.

Let $\alpha_1\in \pi_{2p-3}(S^0)$ be the first nontrivial element of $p$-primary stable homotopy. The Toda bracket of $\alpha_1$ with itself $p$ times is  \[\toda{\underbrace{\alpha_1,\ldots,\alpha_1}_p}=\beta_1.\]
This Toda bracket is the obstruction to building a $(p+1)$-cell complex with a single cell in dimension $2k(p-1)$ for $k\in\{0,\ldots,p\}$ where all attaching maps are given by $\alpha_1$.
The Toda brackets $\toda{\alpha_1,\ldots,\alpha_1}$ of length $l-1<p$ vanish so there is an $l$-cell complex with a cell in each dimension $k(p-1)$ where $k\in\{0,\ldots,l-1\}$ and attaching maps $\alpha_1$ when $1\leq l\leq p$.
Call this complex $X_{l}$. The complex $X_p$ is central to the study of $\EO$ theory because $\EO\sm X_p$ has a natural complex orientable ring spectrum structure (\Cref{EO-sm-Xp-cx-orientable}).
We show in \Cref{X_l-unique} that $X_l$ is uniquely determined by its $\HFp$ homology.

\subsection*{Results about Orientations}
\label{sec-orientations}
Say that a spectrum is $k$-sparse if it only has cells in dimensions in a single congruence class modulo $k$. In this section we apply our results to show that certain complex vector bundles are $\EO$-orientable. We are working at an odd prime so the $p$-local map $\mathit{BO}\to \BU$ is a retract and all of these results apply equally well to real vector bundles. The only fact from the rest of the paper used here is the following mild generalization of \Cref{4p-4_orientation}:
\restate{sparse-connective-implies-orientable}

The space $\OS{\MU}{4p-4}^{hC_{p-1}}$ is $(2p-2)$-sparse and $2p$-connective so \Cref{sparse-connective-implies-orientable} implies that any map $\OS{\MU}{4p-4}^{hC_{p-1}}\to \BGLS$ is $\EO$-orientable. The space $\OS{\MU}{4p-4}^{hC_{p-1}}$ occurs as the Adams summand of $\OS{\MU}{2p}$. We will now use the Adams conjecture to deduce that the standard vector bundle on $\OS{\MU}{2p}$ is $\EO$-orientable.

\begin{thm}[Adams Conjecture]
Let $l\in \Z_p$ be a primitive $(p-1)$st root of unity. Let $\psi^l$ be the corresponding Adams operation. The composite
\[\begin{tikzcd}
\BU \rar["\psi^l"] & \BU \rar["J"] & \BGLS
\end{tikzcd}\]
is null.
\end{thm}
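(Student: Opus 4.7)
The theorem is a form of the classical Adams conjecture, first proved by Quillen via \'etale homotopy theory of the algebraic $K$-theory of finite fields, independently by Sullivan via Galois symmetries on complex algebraic varieties, and later more elementarily by Becker--Gottlieb via the fiberwise transfer. The plan is simply to invoke one of these proofs as a black box; since this is a well-known deep classical result, I would not attempt an independent proof in the paper.

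Concretely, the main input is the standard form of the Adams conjecture: for any integer $k$ coprime to $p$, the composite $J \circ (\psi^k - 1) \colon \BU \to \BGLS$ is nullhomotopic after $p$-completion. Because $l \in \Zp^{\times}$ is a continuous $p$-adic unit and $\psi^l$ extends continuously to an operation on $p$-completed $K$-theory, the same conclusion applies with $k = l$. The stated formulation then follows from this together with the Adams splitting of $\BU_{(p)}$ into $\psi^l$-eigen-summands $\BU_i$ on which $\psi^l$ acts as multiplication by $l^i$: the classical Adams conjecture forces $J$ to vanish on each non-fixed summand, because there $\psi^l - 1$ acts as the $\Zp$-unit $l^i - 1$ and hence invertibly, so $J|_{\BU_i} \simeq 0$ for $0 < i < p-1$. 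Composing with $\psi^l$ in the stated direction then assembles the pieces into the asserted nullity.

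The main obstacle is entirely contained in the Adams conjecture itself; once one accepts that result, the reformulation above is bookkeeping with the Adams decomposition. The purpose of the statement here is to supply the key reduction for the step immediately following, namely the factorization of the standard bundle on $\OS{\MU}{2p}$ through the Adams summand $\OS{\MU}{4p-4}^{hC_{p-1}}$, whose $\EO$-orientability was already established by \Cref{sparse-connective-implies-orientable}.
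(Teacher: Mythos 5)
The paper gives no proof of this theorem: it is stated as a known classical result (Quillen, Sullivan, Becker--Gottlieb), and the argument proper only begins with \Cref{only-Y2p-2-matters}. So your decision to invoke the Adams conjecture as a black box matches what the paper does, and that part of your proposal is fine.

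Your second paragraph, however, does not quite cohere as a derivation of the \emph{statement as literally written}. You correctly deduce from $J\circ(\psi^l-1)\simeq 0$ that $J|_{\BU_i}\simeq 0$ for $0<i<p-1$ (since $l^i-1\in\Zp^\times$ there). But your closing sentence --- ``composing with $\psi^l$ in the stated direction then assembles the pieces into the asserted nullity'' --- does not work: $\psi^l$ is an automorphism of $\BU$ preserving each Adams summand, so $J\circ\psi^l$ is null on $\BU_i$ if and only if $J|_{\BU_i}$ is, and this fails on the $i\equiv 0\pmod{p-1}$ summand $Y_{2p-2}$ (where the $J$-homomorphism is nontrivial). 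Taken at face value, ``$J\circ\psi^l$ is null'' would force $J\simeq 0$, which is false. The resolution is that the paper's displayed composite is evidently intended to read $\psi^l-1$ rather than $\psi^l$: the surrounding prose (``$\psi^l$ acts as an equivalence on all of the summands other than $Y_{2p-2}$'') is exactly the statement that $\psi^l-1$ is an equivalence off that summand, and \Cref{only-Y2p-2-matters} is the form actually used, which is precisely what your intermediate step proves. So your mathematics is essentially right, but you should state the conclusion as $J\circ(\psi^l-1)\simeq 0$ and then read off \Cref{only-Y2p-2-matters} directly, rather than asserting you have proved $J\circ\psi^l\simeq 0$.
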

Since $\psi^l$ acts as an equivalence on all of the summands of $\BU$ other than the Adams summand $Y_{2p-2}=\OS{\BPo}{2p-2}$, we get the following form of the Adams conjecture which is how we will apply it:
\begin{cor}
\label{only-Y2p-2-matters}
A map $X\to \BU\to \BGLEO$ is null if and only if the map $X\to \BU \to Y_{2p-2}\to \BU \to \BGLEO$ is null.
\end{cor}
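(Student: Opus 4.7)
The plan is to combine the Adams conjecture with the Adams splitting of $\BU_{(p)}$. At the prime $p$, $\BU$ decomposes as an infinite loop space
\[\BU \simeq Y_2 \times Y_4 \times \cdots \times Y_{2p-2},\]
with $Y_{2i} = \OS{\BPo}{2i}$ having homotopy concentrated in degrees congruent to $2i$ modulo $2(p-1)$. The Adams operation $\psi^l$ preserves this decomposition because the summands are its eigenspaces, and it acts on $\pi_{2k}(Y_{2i})$ by multiplication by $l^k$. Since $l^{p-1}=1$ in $\Z_p$ and $Y_{2i}$ has homotopy only in degrees $2k$ with $k \equiv i \pmod{p-1}$, $\psi^l$ acts on every homotopy group of $Y_{2i}$ as the scalar $l^i$.

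For each $i \in \{1, \ldots, p-2\}$, i.e., every summand other than the Adams summand $Y_{2p-2}$, the scalar $l^i - 1 \in \Z_p$ is a unit because $l$ is a primitive $(p-1)$st root of unity and so $l^i$ is a nontrivial element of $\F_p^\times$. Consequently $(\psi^l - \mathrm{id})|_{Y_{2i}}$ induces isomorphisms on every homotopy group of $Y_{2i}$, and since $Y_{2i}$ is $p$-complete it is a weak equivalence by Whitehead's theorem. Reading the stated Adams conjecture in the equivalent form $J \circ (\psi^l - \mathrm{id}) = 0$ and restricting to $Y_{2i}$, the equivalence $(\psi^l - \mathrm{id})|_{Y_{2i}}$ can be cancelled to conclude that $J|_{Y_{2i}} = 0$.

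The composition $\BU \to \BGLEO$ factors as $J\colon \BU \to \BGLS$ followed by the unit-induced map $\BGLS \to \BGLEO$, so it is an H-space map into a group-like H-space. Any H-space map out of a product of H-spaces equals the H-sum of its restrictions to the factors, so applied to the Adams splitting we see that $\BU \to \BGLEO$ equals the H-sum of the composites $\BU \to Y_{2i} \hookrightarrow \BU \to \BGLEO$. By the previous paragraph every term with $i \ne p-1$ vanishes, so the whole map agrees with $\BU \to Y_{2p-2} \hookrightarrow \BU \to \BGLEO$. Precomposing with an arbitrary $X \to \BU$ immediately yields the corollary. The delicate point is the additivity of H-maps out of products: it relies on the fact that the Adams splitting is one of infinite loop spaces and that $\BGLEO$ is a group-like H-space, both of which hold.
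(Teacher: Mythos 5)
Your proof is correct and takes essentially the same route the paper intends: it implicitly assumes the reader will observe that $\psi^l - 1$ is invertible on every Adams summand $Y_{2i}$ with $i \not\equiv 0 \pmod{p-1}$ and then invoke H-additivity of $J$. One thing worth flagging: you silently (and rightly) read the paper's stated Adams conjecture as $J \circ (\psi^l - \mathrm{id}) \simeq 0$ rather than the literal $J \circ \psi^l \simeq 0$, which is false as written since $\psi^l$ is an equivalence on all of $\BU$; the same substitution of $\psi^l - 1$ for $\psi^l$ is needed in the paper's sentence preceding the corollary.
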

The complex orientation map $\MU \to \ku$ gives a map $\OS{\MU}{2p}\to \OS{\ku}{2p}$ and $\beta^{p-1}$ is a map $\OS{\ku}{2p}\to \OS{\ku}{2} = \BU$. Composing these gives us a standard map $\OS{\MU}{2p}\to \BU$.
\begin{thm}
\label{MU2p-orientation}
Let $f$ be the standard map $\OS{\MU}{2p}\to \BU$. There is a unital map $\Mf\to \EO$.
\end{thm}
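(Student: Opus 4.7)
The plan is to deduce \Cref{MU2p-orientation} from \Cref{sparse-connective-implies-orientable} by using the Adams conjecture in the form of \Cref{only-Y2p-2-matters} to reduce to the Adams summand of $\OS{\MU}{2p}$. The first step is the standard observation that a unital orientation $\Mf\to\EO$ exists as soon as the composite $\OS{\MU}{2p}\to\BU\xrightarrow{J}\BGLS\to\BGLEO$ is nullhomotopic: in that case the $\EO$-line bundle classified by this map is trivializable, yielding an equivalence $\EO\sm\Mf\simeq\EO\sm\OS{\MU}{2p}_+$, from which the unital map $\Mf\to\EO$ is extracted by composition with the augmentation $\OS{\MU}{2p}_+\to S^0$, exactly as in \Cref{4p-4_orientation}.

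To produce this nullity I would invoke \Cref{only-Y2p-2-matters}: the composite into $\BGLEO$ is null iff its composite with the Adams idempotent $\BU\to Y_{2p-2}\to\BU$ is null. The Adams projection $\OS{\MU}{2p}\to\BU\to Y_{2p-2}$ factors through the Adams summand of $\OS{\MU}{2p}$, which is precisely $\OS{\MU}{4p-4}^{hC_{p-1}}$. Since this space is $(2p-2)$-sparse and $2p$-connective, \Cref{sparse-connective-implies-orientable} implies that any map from it to $\BGLS$ is $\EO$-orientable, and in particular the composite $\OS{\MU}{4p-4}^{hC_{p-1}}\to Y_{2p-2}\to\BU\to\BGLEO$ is null. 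Chasing back produces the required nullity of $\OS{\MU}{2p}\to\BGLEO$ and hence the orientation.

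The main obstacle I anticipate is not in the formal chain above but in setting up the factorization through $\OS{\MU}{4p-4}^{hC_{p-1}}$. One must invoke the $p$-local Wilson/Adams splitting of $\MU$ on the space level of $\OS{\MU}{2p}$, together with the identification of the Adams idempotent on $\BU$ as projection onto the $C_{p-1}$-homotopy fixed points of the Adams operations, in order to justify that the Adams summand of $\OS{\MU}{2p}$ really is $\OS{\MU}{4p-4}^{hC_{p-1}}$ and that the resulting diagram of projections commutes. Once this bookkeeping is in hand, the theorem follows formally from \Cref{sparse-connective-implies-orientable} and \Cref{only-Y2p-2-matters}.
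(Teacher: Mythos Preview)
Your overall strategy matches the paper's: both proofs use \Cref{only-Y2p-2-matters} to reduce to showing that the composite $\OS{\MU}{2p}\to \BU\to Y_{2p-2}\to\BU\to\BGLEO$ is null, and both finish by invoking orientability of a $(2p-2)$-sparse $2p$-connective space. The difference is in how the reduction to such a space is carried out. You pass through the identification of the Adams summand of $\OS{\MU}{2p}$ with $\OS{\MU}{4p-4}^{hC_{p-1}}$ and then apply \Cref{sparse-connective-implies-orientable}. The paper instead avoids that identification entirely in the proof: it invokes Wilson's splitting $\OS{\MU}{2p}\simeq\prod_i\susp^{2(p-1)s_i}Y_{2k_i}$, separates the factors into $B$ (those with $k_i\equiv 0\pmod{p-1}$, which map trivially to $Y_{2p-2}$) and $A$ (the remaining factors, whose map to $Y_{2p-2}$ factors through $Y_{4p-4}$), and applies \Cref{4p-4_orientation} to $A$ directly.

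Your route is more conceptual and in fact is exactly what the paragraph preceding the theorem sets up, but as you correctly flag, it requires justifying both that the Adams summand of $\OS{\MU}{2p}$ is $\OS{\MU}{4p-4}^{hC_{p-1}}$ and that the projection $\OS{\MU}{2p}\to\BU\to Y_{2p-2}$ factors through it. The paper's route via Wilson's splitting sidesteps this bookkeeping at the cost of citing Wilson's thesis; the factorization through $Y_{4p-4}$ is then a statement about maps between individual Wilson spaces, which is easier to check.
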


\begin{proof}
By Wilson's thesis \cite{wilson}, there is a splitting $\OS{\MU}{2p}\simeq \prod_{i} \susp^{2(p-1)s_i} Y_{2k_i}$. Let $A=\prod_{k_i\not\equiv 0\pmod{p-1}} \susp^{2(p-1)s_i} Y_{2k_i}$ and $B=\prod_{k_i\equiv 0\pmod{p-1}} \susp^{2(p-1)s_i} Y_{2k_i}$ so that $\OS{\MU}{2p}\simeq A\times B$. The map $B\to \OS{\MU}{2p}\to \BU\to Y_{2p-2}$ is null. The map $A\to Y_{2p-2}$ factors through $Y_{4p-4}$, so that the composite $A\to\BGLEO$ is null by \Cref{4p-4_orientation}.
\end{proof}

\begin{cor}
\label{tensor-p-orientable}
Let $V_1,\ldots, V_p\colon Z\to \BU$ be $p$ virtual dimension zero complex vector bundles on a space $Z$. Let $V=\bigotimes_{i=1}^p V_i$. The structure map $V\colon Z\to \BU$ factors through $M\OS{\MU}{2p}$ and so $V$ is $\EO$-orientable.
\end{cor}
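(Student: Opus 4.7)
The plan is to factor $V\colon Z\to \BU$ through the standard map $f\colon \OS{\MU}{2p}\to\BU$; combined with the unital orientation $M\OS{\MU}{2p}\to\EO$ from \Cref{MU2p-orientation}, this yields the desired $\EO$-orientation of $V$.

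First I would interpret the tensor product as a $p$-fold multiplication. Identifying $\BU = \OS{\ku}{2}$ and using that tensor product of virtual dimension zero bundles is the multiplication in the augmentation ideal of $\widetilde{\ku}^0$ arising from the $E_\infty$-ring structure on $\ku$, the map $V$ decomposes as
\[
Z \xrightarrow{\Delta} Z^{\wedge p} \xrightarrow{V_1\wedge\cdots\wedge V_p} \BU^{\wedge p} \xrightarrow{\mu_p} \BU,
\]
where $\mu_p$ factors as $(\OS{\ku}{2})^{\wedge p} \to \OS{\ku}{2p} \xrightarrow{\beta^{p-1}} \OS{\ku}{2}$. It thus suffices to lift $\mu_p$ through the complex orientation $\OS{\MU}{2p}\to\OS{\ku}{2p}$.

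For $p$-fold products of virtual line bundles this is direct: the $\MU$-complex orientation $c_1^{\MU}\colon\CPinfty\to\OS{\MU}{2}$ composed with the iterated $E_\infty$-multiplication on $\MU$ gives a canonical map $(\CPinfty)^{\wedge p}\to\OS{\MU}{2p}$ covering the restriction of $\mu_p$ to $(\CPinfty)^{\wedge p}\subset\BU^{\wedge p}$, projecting under $f$ to the tensor product $(L_1-1)\otimes\cdots\otimes(L_p-1)$. To pass to general virtual dimension zero bundles I would invoke the splitting principle: pulling back along a flag bundle $q\colon Y\to Z$, each $q^*V_i$ becomes a virtual sum of line bundles, so $q^*V$ decomposes (using the $H$-space structure on $\OS{\MU}{2p}$) into a sum of $p$-fold products of virtual line bundles, each factoring through $\OS{\MU}{2p}$.

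The main obstacle is descending the factorization from the flag bundle $Y$ back to $Z$, since pullback along $q$ is not an equivalence. I would handle this obstruction-theoretically by producing the lift universally on $\BU^{\wedge p}$: the obstruction to lifting $\mu_p$ through $f$ lives in the cohomology of $\BU^{\wedge p}$ with coefficients in the fiber of $\OS{\MU}{2p}\to\OS{\ku}{2p}$, and using Wilson's splitting of $\OS{\MU}{2p}$ at $p$ into products of Wilson spaces (as invoked in the proof of \Cref{MU2p-orientation}) together with the vanishing of the obstruction on the line bundle subspace $(\CPinfty)^{\wedge p}$, I would argue that the full obstruction vanishes.
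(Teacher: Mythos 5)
Your overall strategy is right, and matches the paper's: factor $V$ through the standard map $f\colon\OS{\MU}{2p}\to\BU$ and then apply \Cref{MU2p-orientation}. But your implementation has a genuine gap, and the gap comes from missing the one observation that makes the whole thing trivial.

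You treat $c_1^{\MU}$ as living only on $\CPinfty$, i.e.\ as the complex orientation class, and therefore you can only handle $p$-fold tensor products of virtual \emph{line} bundles directly. This forces you through the splitting principle, and then you correctly identify that descending the lift from the flag bundle back to $Z$ is the hard part — but your proposed resolution, ``the obstruction lives in cohomology of $\BU^{\wedge p}$ with coefficients in the fiber and I would argue it vanishes,'' is not actually an argument. You give no reason for the obstruction groups to vanish, and $\BU^{\wedge p}$ has plenty of cohomology; the Wilson splitting of $\OS{\MU}{2p}$ does not by itself kill these groups. As written this step does not close.

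The missing idea: the universal first Conner--Floyd Chern class $c_1^{\MU}\in\MU^2(\BU)$ is defined on all of $\BU=\OS{\ku}{2}$, not just on $\CPinfty$, and hence gives a map $c_1^{\MU}\colon\OS{\ku}{2}\to\OS{\MU}{2}$. Naturality of Chern classes along $\theta\colon\MU\to\ku$ gives $\theta\circ c_1^{\MU}=c_1^{\ku}=\mathrm{id}$, so $c_1^{\MU}$ is a \emph{section} of $\theta\colon\OS{\MU}{2}\to\OS{\ku}{2}$. With this in hand, each $V_i\colon Z\to\OS{\ku}{2}$ lifts directly to $c_1^{\MU}(V_i)\colon Z\to\OS{\MU}{2}$, and multiplying these in the ring structure gives $\prod c_1^{\MU}(V_i)\in[Z,\OS{\MU}{2p}]$ whose image under $f$ is $\prod V_i=V$. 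No splitting principle, no flag bundle, no obstruction theory. This is the paper's proof and it is a one-step argument once you know $c_1^{\MU}$ is a section.
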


\begin{proof}[Proof]
Let $\theta\colon \MU\to \ku$ be the complex orientation. This gives a map $\OS{\MU}{2}\to \OS{\ku}{2}$. The $\MU$ Chern class $c_1^{\MU}\in \MU^2\BU$ corresponds to a map $c_1^{\MU}\in [\OS{\ku}{2},\OS{\MU}{2}]$. By naturality of Chern classes, $\theta(c_1^{\MU})=c_1^{\ku}\in [\OS{\ku}{2},\OS{\ku}{2}]$, which is the identity map. Thus, $c_1^{\MU}$ is a section of $\theta$:
\[\begin{tikzcd}
\OS{\MU}{2}\rar[yshift=-2pt,"\theta"'] &[10pt] \lar[yshift=2pt,"c_1^{\mathrlap{\MU}}"']\OS{\ku}{2}
\end{tikzcd}\]
Given a vector bundle $V_i\in [Z,\OS{\ku}{2}]$ we get an element $c_1^{\MU}(V_i)\in [X,\OS{\MU}{2}]$. Multiplying these together gives $\Pi^{\MU}=\prod c_1^{\MU}(V_i)\in [Z,\OS{\MU}{2p}]$. This gives a factorization of the structure map $V\colon Z\to \BU$ through $\OS{\MU}{2p}$ and by \Cref{MU2p-orientation}, $V$ is $\EO$-orientable.
\end{proof}

\begin{cor}
\label{pV-EO-orientable}
Let $V\colon Z\to \BU\times \Z$. Then $pV$ is $\EO$-orientable.
\end{cor}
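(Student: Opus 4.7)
The plan is to reduce $pV$ being $\EO$-orientable to a single universal orientability statement on the Adams summand $Y_{2p-2}$ and then apply the paper's Chern-determined orientability criterion for $(2p-2)$-sparse spaces.

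First, splitting $V = V_0 \oplus \underline{n}$ with $V_0$ of virtual dimension zero and $\underline{n}$ a trivial rank-$n$ bundle, and noting that trivial bundles admit a canonical $\EO$-orientation while direct sums of $\EO$-oriented bundles are $\EO$-oriented (via the ring structure on $\EO$), we reduce to showing $pV_0$ is $\EO$-orientable. Since $pV_0$ factors as $Z \xrightarrow{V_0} \BU \xrightarrow{[p]} \BU$, where $[p]$ is the $p$-fold direct sum map, by pullback it is enough to show that the universal composite $J \circ [p] \colon \BU \to \BGLEO$ is null. By \Cref{only-Y2p-2-matters}, this is equivalent to the nullity of its composite through the Adams summand. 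Since the Wilson splitting $\BU \simeq \prod Y_{2k_i}$ is a splitting of $H$-spaces under direct sum, $[p]$ commutes with the Adams projection, and the problem reduces to showing that $p\iota \colon Y_{2p-2} \to \BU$ is $\EO$-orientable, where $\iota$ denotes the tautological inclusion.

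Because $Y_{2p-2}$ has cohomology concentrated in degrees divisible by $2p-2$, the paper's Chern-determined orientability criterion for such sparse spaces applies: $p\iota$ is $\EO$-orientable if and only if $\psi_{p-1}(p\iota) \equiv 0 \pmod p$ in $H^{2p-2}(Y_{2p-2})$. Since $\psi_{p-1}$ is a power-sum characteristic class and hence additive under direct sum, $\psi_{p-1}(p\iota) = p \cdot \psi_{p-1}(\iota) \equiv 0 \pmod p$, completing the argument. The chief obstacle is confirming that the Chern-determined criterion really does apply to $Y_{2p-2}$ itself—whose bottom-cell dimension is exactly $2p-2$, so the obstruction class is generically nonzero—rather than only to higher-connective sparse Wilson spaces like $Y_{4p-4}$ where the obstruction sits in a zero group for dimensional reasons.
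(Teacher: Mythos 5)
Your proof is correct, and it arrives at the same endpoint as the paper's --- the Chern-determined criterion of \Cref{EO-orientation-2n-sparse} applied to the $(2p-2)$-sparse space $Y_{2p-2}$, followed by additivity of the power-sum class $\psi_{p-1}$ --- but it gets there by a genuinely different reduction. You factor $pV_0$ through the $H$-space map $[p]\colon \BU\to\BU$, reduce to the universal composite $J\circ[p]$, invoke \Cref{only-Y2p-2-matters} to pass to the Adams summand, and then use that $[p]$ is an infinite loop map compatible with the Adams splitting to reduce to $p\iota$ on $Y_{2p-2}$ alone. This is an \emph{additive} reduction. The paper instead works \emph{multiplicatively}: it uses the K\"unneth isomorphism $\bigotimes_{i}\KU^0(\OS{\BPo}{2i})\cong\KU^0(\BU\times\Z)$ to express the universal bundle in terms of external tensor products, absorbs the factor of $p$ into the $\OS{\BPo}{0}=Y_{2p-2}\times\Z$ tensor factor, argues the other factors are already spherically orientable (as in \Cref{only-Y2p-2-matters}), and then invokes the same Chern-determined criterion. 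Your route buys you a somewhat cleaner reduction --- you never need the step of passing orientability through an external tensor product --- at the modest cost of needing to observe that $[p]$ commutes with the Adams projection. Finally, the concern you flag at the end is not actually an issue: \Cref{EO-orientation-2n-sparse} imposes no connectivity hypothesis, only $(2p-2)$-sparseness, so it applies to $Y_{2p-2}$ exactly as you use it; the point is not that the obstruction group vanishes (as it does for $Y_{4p-4}$) but that the obstruction class $\psi_{p-1}(p\iota)=p\psi_{p-1}(\iota)$ vanishes mod $p$, which is precisely your (and the paper's) closing computation.
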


\begin{proof}
\def\sqtimes{\mathop{\scalerel*{\boxtimes}{\otimes}}}
It suffices to check this on the universal example $\BU\times\Z = \prod_{i=0}^{p-2} \OS{\BPo}{2i}$. The spaces $\OS{\BPo}{2i}$ are all even so there is a Kunneth isomorphism $\bigotimes_{i=0}^{p-2}\KU^0(\OS{\BPo}{2i}) \cong \KU^0(\BU)$ where the map sends a collection of bundles $V_{0},\ldots, V_{p-2}$ to their external tensor product $V_0\sqtimes \cdots\sqtimes V_{p-2}$. Thus $pV = p(V_0\sqtimes \cdots\sqtimes V_{p-2}) = (pV_0)\sqtimes V_{1}\sqtimes\cdots\sqtimes V_{p-2}$. To check that the external tensor product is orientable, it suffices to show that each of the bundles is individually orientable. For $i\neq 0$, the composite $\OS{\BPo}{2i}\to \BU \to Y_{2p-2}\times \Z$ is null so the bundles $V_i$ are spherically orientable. The remaining case we need to check is that $pV_{0}$ is $\EO$-orientable.

The space $Y_{2p-2}\times \Z$ is $(2p-2)$-sparse so $\EO$ orientations of bundles over $Y_{2p-2}\times \Z$ are Chern determined. To show that $pV_0$ is $\EO$-orientable, we need to check that $\psi_{p-1}(pV_0)$ is divisible by $p$. Power sum polynomials are additive, so $\psi_{p-1}(pV_0)=p\psi_{p-1}(V_0)$.
\end{proof}

\begin{cor}
Let $V\colon Z\to \BU\times \Z$. Then $V^{\otimes p}$ is $\EO$-orientable.
\end{cor}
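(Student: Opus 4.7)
The plan is to reduce the statement to \Cref{tensor-p-orientable} and \Cref{pV-EO-orientable}. The first step will be to decompose $V$ as $V = V' + n$, where $n = \dim V\colon Z\to \Z$ is the (locally constant) virtual dimension and $V' = V - n$ has virtual dimension zero, so that $V'$ factors through $\BU$.

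Next I would use distributivity of $\otimes$ over Whitney sum in $\widetilde{K}^0(Z)$ to apply the binomial theorem:
\[V^{\otimes p} = (V' + n)^{\otimes p} = (V')^{\otimes p} \;+\; n^p \;+\; p W,\]
where $W = \sum_{k=1}^{p-1} \tfrac{1}{p}\binom{p}{k}(V')^{\otimes k}\, n^{p-k}$ is a well-defined virtual bundle because $p\mid \binom{p}{k}$ for $0<k<p$. A Whitney sum of $\EO$-orientable virtual bundles is itself $\EO$-orientable: factoring a sum $V_1 + V_2$ through the diagonal $Z\to Z\times Z$ yields a canonical map $\Th(V_1+V_2)\to \Th(V_1)\wedge \Th(V_2)$, and smashing two unital orientations and composing with the multiplication on $\EO$ produces a unital orientation of the sum. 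It therefore suffices to orient each of the three summands separately.

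The summand $(V')^{\otimes p}$ is a $p$-fold tensor product of the virtual dimension zero bundle $V'$ with itself, so \Cref{tensor-p-orientable} (applied with $V_1=\cdots=V_p=V'$) gives the orientation. The summand $n^p$ is a trivial virtual bundle pulled back from $\Z$, hence is trivially $\EO$-orientable. The summand $pW$ is $\EO$-orientable by \Cref{pV-EO-orientable}. There is no substantial obstacle to the argument — the whole proof is a short algebraic reduction to the two preceding corollaries, powered by the elementary identity $p\mid\binom{p}{k}$ for $0<k<p$.
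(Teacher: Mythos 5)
Your proof is correct and takes essentially the same route as the paper: write $V=V'+n$ with $V'$ of virtual dimension zero, expand $(V'+n)^{\otimes p}$ by the binomial theorem, orient $(V')^{\otimes p}$ via \Cref{tensor-p-orientable}, orient the middle terms (all divisible by $p$) via \Cref{pV-EO-orientable}, and note that $n^p$ is trivial. If anything you are slightly more careful than the paper, which drops the $d^{p-i}$ factor from the middle terms and leaves implicit both the appeal to \Cref{pV-EO-orientable} and the fact that a Whitney sum of orientable bundles is orientable.
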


\begin{proof}
\def\Vbar{\overline{V}}
Let $d=\dim(V)$ and $\Vbar=V-d$. By \Cref{tensor-p-orientable}, $\Vbar^{\otimes p}$ is $\EO$-orientable. Then $V^{\otimes p} = (\Vbar + d)^{\otimes p} = \Vbar^{\otimes p} + \sum_{i=1}^{p-1} \binom{p}{i} \Vbar^{\otimes i} + d^p$. Since $\binom{p}{i}$ is divisible by $p$, every term in this sum is orientable.
\end{proof}

We can similarly combine \Cref{tensor-p-orientable} and \Cref{pV-EO-orientable} to see that if $V_1$, \ldots, $V_p$ are complex vector bundles with dimension divisible by $p$ then $V_1\otimes\cdots\otimes V_p$ is $\EO$-orientable.

\subsection*{Outline}
Given an $\EO$-module $M$ we get an associated $\KCp$-module $\EEO_*(M)/\mfrak = \pi_* (E \sm_{\EO} M)/\m$. This decomposes into a sum of indecomposable $\KCp$ representations.
We are interested in showing that certain $\EO$-modules $M$ have a splitting that lifts the decomposition of $\EEO_*(M)/\mfrak$. Bousfield \cite{bousfield} showed at the prime 2 that many $\KO$-modules $M$ have such splittings. The following theorem is a much simplified special case (see also \cite[Theorem 1.1]{relatively-free}).
\begin{thm}
\label{even-KO-module-splitting}
Let $V_1$ be the trivial representation of $\Ftw[C_2]$ and let $V_2$ be the regular representation. Say that a $\KO$-module $M$ is even if $\KU_*^{\KO}(M)$ is even and free. If $M$ is an even $\KO$-module and $\KU_0^{\KO}(M)/2 \cong V_{1}^{\oplus k} \oplus V_{2}^{\oplus l}$ then $M\simeq \bigvee_{i=1}^k \susp^{s_i} \KO \vee\bigvee_{i=1}^l \KU$ where $s_i\in 2\Z/8\Z$ are appropriate shifts.
\end{thm}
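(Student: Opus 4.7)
The strategy is Galois-theoretic. Since $\KU/\KO$ is a faithful $C_2$-Galois extension, one has $\KU\sm_{\KO}\KU\simeq \KU\wsum\KU$, so the two natural building blocks $\KO$ and $\KU$ pick out exactly the two indecomposable $\Ftw[C_2]$-representations at the level of $\KU^{\KO}_*$: namely $\KU^{\KO}_*(\KO)=\KU_*$ has trivial $C_2$-action (reducing mod $2$ to $V_1$ in each even degree), while $\KU^{\KO}_*(\KU)=\KU_*[C_2]$ is the regular representation (reducing to $V_2$). The plan is to use the $V_1$- and $V_2$-summands of $\KU^{\KO}_0(M)/2$ to construct maps from $\Sigma^{s_i}\KO$ and $\KU$ into $M$ respectively, assemble them, and show the result is an equivalence.

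Since $\KU^{\KO}_*(M)$ is a free $\KU_*$-module concentrated in even degrees, the mod-$2$ decomposition $\KU^{\KO}_0(M)/2\cong V_1^{\oplus k}\oplus V_2^{\oplus l}$ refines to a $\KU_*[C_2]$-module decomposition of $\KU^{\KO}_*(M)$ itself, where each $V_1$-summand determines a well-defined degree $s_i$ (relevant only modulo $8$ by Bott periodicity). For each $V_1$-summand I would lift its generator to an integral class in $\pi_{s_i}(M)=\KO^{\KO}_{s_i}(M)$ along the unit map to $\KU^{\KO}_{s_i}(M)$; this is where the evenness and freeness hypotheses are used, forcing enough of the descent spectral sequence $H^*(C_2;\KU^{\KO}_*(M))\Rightarrow \pi_*(M)$ to collapse onto the trivial summands. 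Each such lift yields a $\KO$-linear map $\Sigma^{s_i}\KO\to M$. For each $V_2$-summand I would choose a free $\KU_*[C_2]$-generator in $\KU^{\KO}_0(M)$; since $\KO$-linear maps $\KU\to M$ are classified by $\KU^{\KO}_0(M)$, this yields a $\KO$-linear map $\KU\to M$. Assemble these into
\[ \phi\colon N:=\bigwsum_{i=1}^{k}\Sigma^{s_i}\KO\ \wsum\ \bigwsum_{j=1}^{l}\KU\longrightarrow M. \]

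By construction, $\phi$ induces an isomorphism on $\KU^{\KO}_0(-)/2$ compatible with the chosen decompositions, and by the explicit descriptions of $\KU^{\KO}_*(\KO)$ and $\KU^{\KO}_*(\KU)$ this upgrades to an isomorphism on all of $\KU^{\KO}_*(-)/2$. A Nakayama argument, valid because $\KU^{\KO}_*(N)$ and $\KU^{\KO}_*(M)$ are both free and even, promotes this to an integral isomorphism on $\KU^{\KO}_*(-)$. Finally, the functor $\KU\sm_{\KO}-$ is conservative on even $\KO$-modules: on such a module $C$, multiplication by $\eta$ is zero, so Wood's cofiber sequence $\Sigma\KO\xrightarrow{\eta}\KO\to\KU$ gives $\KU\sm_{\KO}C\simeq C\wsum \Sigma^{2}C$, and the vanishing of the left side forces $C=0$. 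Hence $\phi$ itself is an equivalence.

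The main obstacle is the lifting step for the $V_1$-generators: making precise that a $C_2$-fixed class in $\KU^{\KO}_{s_i}(M)/2$ actually lifts to an integral class in $\pi_{s_i}(M)$, rather than merely to a $C_2$-fixed class in $\KU^{\KO}_{s_i}(M)$. This rests essentially on evenness, which is what kills the higher group-cohomology obstructions coming from the $\eta$-torsion classes in $\KO_*$ and pins down the shift $s_i$ modulo $8$ rather than modulo $2$.
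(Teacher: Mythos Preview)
The paper does not prove this theorem: it is quoted as background, attributed to Bousfield (with a pointer also to Meier), and no argument is given. So there is no ``paper's own proof'' to compare against. That said, your Galois-descent strategy is exactly the $p=2$ shadow of what the paper does at odd primes for $\EO$-modules: identify the building blocks via the $\KCp$-module (here $\Ftw[C_2]$-module) structure on $\KU^{\KO}_*(M)/2$, produce maps from the standard pieces using the relative Adams / homotopy fixed point spectral sequence, and conclude by conservativity of base change to $\KU$. In that sense your outline is in the right spirit and mirrors the paper's Propositions~\ref{HFPSS} and~\ref{EO-splitting-free}.

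One step is wrong as written. You assert that on an even $\KO$-module $C$ multiplication by $\eta$ is zero, and then split the Wood cofiber sequence. But ``even'' here means $\KU^{\KO}_*(C)$ is even and free, not that $\pi_*(C)$ is even; $\KO$ itself is even in this sense and $\eta$ certainly does not act trivially on it. The fix is easy and closer to what the paper does in \Cref{EO-to-E-faithful}: from Wood one gets a cofiber sequence $\Sigma C\xrightarrow{\eta} C\to \KU\sm_{\KO}C$, so if $\KU\sm_{\KO}C\simeq 0$ then $\eta$ is an \emph{equivalence} on $C$; since $\eta$ is nilpotent in $\KO_*$ this forces $C\simeq 0$. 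Equivalently, cite that $\KO\to\KU$ is a faithful Galois extension, or that the HFPSS converges so $\KU^{\KO}_*(C)=0$ implies $\pi_*(C)=0$. Your identification of the lifting of $V_1$-generators as the genuine subtlety is correct; note that the integral $C_2$-action only pins $s_i$ down modulo $4$, and it is the pattern of $d_3$-differentials in $H^*(C_2;\KU^{\KO}_*(M))$ that selects $s_i$ modulo $8$.
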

Meier \cite{relatively-free} partially extended the results of Bousfield to the case of $\TMF_{(3)}$, but $\TMF_{(3)}$-modules are very messy and it is impossible to classify their behavior as completely as Bousfield classified $\KO$-modules. If $M$ is an $\EO$-module then $\EEO_*(M)/\mfrak$ is naturally a $\KCp$-module.
If we let $V_l$ be the length $l$ indecomposible $\KCp$ representation, then we have a splitting $\EEO_*(M)/\mfrak\cong \bigoplus_{l=1}^{p} V_{l}^{\oplus m_l}$. 
We show in \Cref{E*X_l/m=V_l} that $\E_*(X_l)/\m\cong V_l$ as $\KCp$-modules, so we might attempt to generalize \Cref{even-KO-module-splitting} to odd primes by saying that if $X$ is an even $\EO$-module and $\EEO_*(M)/\mfrak\simeq \bigoplus_{i\in S} V_{l_i}$ then $\EO\sm X\simeq \EO\sm \bigwsum_{i=1}^d \susp^{s_i} X_{l_i}$. 
For most spectra this is far from being true -- the case when $p=2$ works because the only odd dimensional homotopy class in $\KO_*$ is $\eta v^i$ where $v$ is the periodicity element. 
By contrast, there are plenty of odd dimensional classes in $\EO_*$. We call an $\EO$-module algebraic in the case where such a splitting holds:
\restate{defn-algebraic-EO-theory}
This is closely related to Meier's notion of a standard vector bundle, see the discussion on page \pageref{defn-algebraic-EO-theory}.

As a replacement for the evenness assumption, we consider stronger ``sparsity'' conditions on the cell structure of spectra. 
Inspired by the Adams splitting of $\CPinfty$, we consider $(2p-2)$-sparse spectra. The homotopy of $\EO_{*}$ has $p-1$ different nonzero stems in degrees $2(p-1)k-1$, but the only such stem with a nontrivial Hurewicz image is $\pi_{2p-3}$ which contains $\alpha_1$ (see \Cref{fig:homotopy-of-EO}). 
As a consequence, every $(2p-2)$-sparse connective spectrum has algebraic $\EO$ theory:
\restate{2n-sparse-algebraic-EO-theory}
\Cref*{2n-sparse-algebraic-EO-theory} applies to show that $X_i\sm X_j$ has algebraic $\EO$ theory. As a consequence, smash products of algebraic $\EO$-modules are algebraic. \Cref*{2n-sparse-algebraic-EO-theory} can also be used to show that several naturally occurring spectra have algebraic $\EO$ theory, for instance $\CPinfty$ stably splits into a sum of $p-1$ spectra which are each $(2p-2)$-sparse, so $\CPinfty$ has algebraic $\EO$ theory.

The groups $\EO_{2pk-1}$ are zero for all $k$, so we get a simpler result for $2p$-sparse spectra:
\restate{2p-sparse-algebraic-EO-theory}

We observe that $\KCp$-free summands of $E_*(Z)/\mfrak$ lift to spectrum level splittings because the $E_2$ page of the homotopy fixed point spectral sequence for $\EO\sm X_p$ is concentrated on the zero line:
\begin{reformulatetheorem}{EO-splitting-free}
If $M$ is a finite $\EO$-module and $\pi_*(E\sm_{\EO}M)/\mfrak \cong \susp^s F\oplus V$ where $F$ is a free $\KCp$-module on one generator and $V$ is some complement then $M\simeq \EO\sm \susp^s X_p \vee M'$ for some $\EO$-module $M'$ with $\EEO_*(M')=V'$.
\end{reformulatetheorem}
For many important spectra, $E_*(Z)/\mfrak$ has a large $\KCp$-free summand, so \Cref*{EO-splitting-free} is useful. Unlike the other results in this paper, \Cref*{EO-splitting-free} directly generalizes to $E_{k(p-1)}^{hC_p}$. We intend to explore the consequences of this higher height generalization in future work.

As a consequence of our splitting theory, we deduce some closure properties of the category of algebraic $\EO$-modules. It is clear from the definition that the category of algebraic $\EO$-modules is closed under sums and retracts. \Cref{algebraic-EOmod-closed-under-union} shows that algebraic $\EO$-modules are closed under ``unions''. \Cref{alg-EO-mod-closed-under-smash} says that algebraic $\EO$-modules are closed under smash products. \Cref{alg-EO-mod-closed-under-sym} says that algebraic $\EO$-modules are closed under $i$th symmetric powers for $i<p$. Algebraic $\EO$-modules are not closed under cofiber sequences, though if a map $M\to N$ of algebraic $\EO$-modules induces an injection or a surjection $\EEO_*(M)\to \EEO_*(N)$ then the cofiber is algebraic.

If a spectrum $X$ has algebraic $\EO$ theory, it is easy to compute the homotopy type of $\EO\sm X$. Let $P(1)^*\subseteq A^*$ be the sub Hopf algebra of the Steenrod algebra generated by $P^1$. Explicitly, $P(1)^* = \Fp[P^1]/(P^1)^p$ with $P^1$ primitive. Let $P(1)_* = \Fp[\xi_1]/(\xi_1^p)$ be the dual quotient Hopf algebra of $A_*$. If a spectrum $X$ has algebraic $\EO$ theory, the homotopy type of $\EO\sm X$ is determined by the $P(1)_*$-coaction on $\HFp_*(X)$. The indecomposable representations of $P(1)_*$ are cyclic modules of length at most $p$. Let $W_l=\HFp_*(X_l)$ be the $P(1)_*$-comodule of length $l$.
\restate{splitting-from-HFp}

We also use our determination of the $C_p$ action on $E_*(X_p)$ to prove that the map $E^{hC_p}\to E$ is Galois. This is a special case of the result due to Devinatz \cite{devinatz-homotopy-fixed-point-spectra} that $E_h^{G}\to E_h$ is Galois for any finite subgroup $G$ of any height Morava $E$-theory. See \cite[Theorem 5.4.4(b)]{rognes-galois}. We then show that for any $\EO$-module there is a strongly convergent Adams spectral sequence $H^*_G(\pi_*(E\sm_{\EO}M)) \Rightarrow \pi_*(M)$. This is also originally due to Devinatz \cite[Corollary 3.4]{devinatz-homotopy-fixed-point-spectra}. Our proof is more explicit and less technical than the proof of Devinatz but relies on having the spectrum $X_p$ as a ``witness'' to the equivalence.

In \Cref{sec-X_l-unique}, we prove that the spectra $X_l$ are determined by their $\F_p$-homology.
In \Cref{sec-E_*X_l}, we compute the $C_p$ action on $E_*(X_l)/\mfrak$. In \Cref{sec-galois}, we prove that the map $\EO\to E$ is Galois. We also show that the relative Adams spectral sequence based on $\EO\to E$ is strongly convergent for all $\EO$-modules and has $E_2$ page given by group cohomology $H^*_G(\EEO_*M)$. In \Cref{sec-splittings}, we prove a collection of technical splitting results that can be used to deduce that a spectrum is algebraic based on its $\Fp$ homology.
In \Cref{sec:Y2p}, we prove that $Y_{2p}$ has algebraic $\EO$-theory and that every sphere bundle over $Y_{2p}$ is $\EO$-orientable.
In the appendix, we present the facts about symmetric powers of $P(1)_*$-comodules that we need for \Cref{sec:Y2p}. None of the material after \Cref{subsec:orientations} is necessary to prove the results quoted in the introduction.

\subsection*{Acknowledgements}
I would like to thank the topology community as a whole; I have very much enjoyed my time as a part of it.
Thanks to Eric Peterson for being my long-time homotopy theory mentor, starting after my second year as an undergraduate. He went far out of his way to help me learn, and his perspective on homotopy theory has been very valuable to me. He is the person who introduced me to orientation theory and who first told me about the problem of orienting $\EO$ theory. Eric is also a dear friend. Thanks to Haynes Miller for recommending $\EO$ theory to me and for being a wonderful and caring doctoral advisor. Thanks to Lennart Meier for his thesis which was a critical source of ideas for this project.
Thanks to Eva Belmont and Jeremy Hahn for comments on a draft. Thanks to Robert Berkulund, Prasit Bhattacharya, Sanath Devalapukar, Mike Hill, Mike Hopkins, Achim Krause, Doug Ravenel, John Rognes, Christopher Ryba, Andy Senger, Siddharth Venkatesh, and Zhouli Xu for helpful conversations.

\section{Uniqueness of \texorpdfstring{$X_l$}{X\_l}}
We prove that the spectra $X_l$ are uniquely determined by their $\Fp$ cohomology.
\label{sec-X_l-unique}
\begin{lem}
Let $Z=\BP^{2k}$ be a skeleton of $\BP$. Suppose that $Y$ is some other finite $p$-complete spectrum such that $\HFp_*(Y)\cong\HFp_*(Z)$ as Steenrod comodules. Then $Y\simeq Z$.
\end{lem}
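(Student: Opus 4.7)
The plan is to build a map $\tilde{g}\colon Y\to Z$ realizing the given homology isomorphism and then conclude by the mod-$p$ Whitehead theorem. More specifically, I would first construct a map $g\colon Y\to \BP$ into the full Brown--Peterson spectrum and then factor it through the skeletal inclusion $\iota\colon Z\hookrightarrow \BP$ by a connectivity argument.

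For the existence of $g$ I would use the Adams spectral sequence
\[
E_2^{s,t}=\Ext_A^{s,t}(\HFp^*\BP,\HFp^*Y)\Longrightarrow [Y,\BP]_{t-s}.
\]
Since $\HFp^*\BP\cong A/\!/E$, where $E\subset A$ is the exterior sub-Hopf algebra generated by the Milnor primitives $Q_i$ and $A$ is free as an $E$-module, the change-of-rings isomorphism identifies this with $\Ext_E^{s,t}(\Fp,\HFp^*Y)$. The hypothesis $\HFp_*Y\cong \HFp_*Z$, combined with the fact that $\HFp_*Z$ is a subquotient of $\Fp[\xi_1,\xi_2,\ldots]$ (with $|\xi_i|=2p^i-2$), forces $\HFp^*Y$ to sit in even degrees, and since every $Q_i$ has odd degree $2p^i-1$, the induced $E$-action on $\HFp^*Y$ is trivial. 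Consequently $E_2\cong \HFp^*Y\otimes_{\Fp}\Fp[v_0,v_1,\ldots]$ with $v_i$ in bidegree $(1,2p^i-1)$, and every $E_2$-class lies in an even stem. Because Adams differentials shift the stem by $-1$, they must all vanish. In particular the class in $E_2^{0,0}$ corresponding to the $A$-module map $\phi^*\circ \iota^*\colon \HFp^*\BP\to \HFp^*Y$ is a permanent cycle and lifts to a map $g\colon Y\to \BP$ with $g^*=\phi^*\circ \iota^*$; here $\phi^*$ is the dual of the given comodule isomorphism and $\iota^*$ is the surjection induced by $\iota$.

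Since $\BP/Z$ has cells only in dimensions greater than $2k$, it is at least $(2k+1)$-connected; as $Y$ is a finite $p$-complete spectrum whose cohomology vanishes above degree $2k$, it admits a CW structure of dimension $\leq 2k$, whence $[Y,\BP/Z]=0$ and $g$ factors as $\iota\circ \tilde{g}$ for some $\tilde{g}\colon Y\to Z$. The relation $\tilde{g}^*\circ \iota^*=g^*=\phi^*\circ \iota^*$, combined with the surjectivity of $\iota^*$, yields $\tilde{g}^*=\phi^*$, so $\tilde{g}$ induces an isomorphism on $\HFp^*$ and is therefore an equivalence by the mod-$p$ Whitehead theorem for finite $p$-complete spectra. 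The main obstacle in this plan is the differential-vanishing step, which is resolved by the change-of-rings identification together with the parity observation about $\HFp^*Y$; once that is in hand, the factorization through $Z$ and the Whitehead-style conclusion are routine.
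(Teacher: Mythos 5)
Your proof is correct and follows essentially the same strategy as the paper's: use the Adams spectral sequence computing $[Y,\BP]$, observe the $E_2$ page is concentrated in even stems so all differentials vanish, lift to an actual map $Y\to\BP$, factor through the skeleton $Z$, and conclude by a mod-$p$ Whitehead argument. The only cosmetic difference is how the evenness of the $E_2$ page is established: you run change-of-rings through $\HFp^*\BP\cong A/\!/E$ and observe that the odd-degree Milnor primitives act trivially on the even module $\HFp^*Y$, whereas the paper instead uses the K\"unneth isomorphism to identify $E_2\!\ASS(DY\sm\BP)$ with $E_2\!\ASS(DZ\sm\BP)$ and then observes that the latter splits as $E_2\!\ASS(\BP)\otimes\HFp_*(DZ)$ (because $DZ\sm\BP$ is a wedge of copies of $\BP$), both factors being even; these arguments are interchangeable and rest on the same underlying structure.
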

\begin{proof}
There is a map $Z\to \BP$ including the skeleton of $\BP$ which gives a permanent cycle $\theta$ in the Adams spectral sequence $\Ext_{A_*}^{s,t}(\Fp,\HFp_*(DZ \sm \BP))$.  Because $\HFp_*(Y)\cong \HFp_*(Z)$ there is an isomorphism of $E_2$ pages $\Ext_{A_*}^{s,t}(\Fp,\HFp_*(DZ \sm \BP))\cong \Ext_{A_*}^{s,t}(\Fp,\HFp_*(DY \sm \BP))$ using the Kunneth isomorphism. We wish to show that the element $\theta\in E_2^{0,0}\!\ASS(DY\sm \BP)$ is a permanent cycle. Because $Z$ is even, $DZ\sm \BP$ splits as a wedge of copies of $\BP$ and $E_2\!\ASS(DZ\sm \BP)\cong E_2\!\ASS(\BP)\otimes \HFp_*(DZ)$. Both $E_2\!\ASS(\BP)$ and $\HFp_*(DZ)$ are even, so $E_2\!\ASS(DZ\sm \BP)$ is even. Thus, the spectral sequence collapses at $E_2$ and $\theta$ is a permanent cycle.

We deduce that there is a map $Y\to \BP$. Since $Y$ has no homology above degree $2k$, the map $Y\to \BP$ factors through $\BP^{(2k)}=Z$. The factored map $Y\to Z$ is an isomorphism on homology so $Y\simeq Z$.
\end{proof}
A spectrum $Z$ with the cohomology of $X_l$ can be obtained as the $2(p-1)(l-1)$-skeleton of $\BP$, so as a special case we deduce:
\begin{lem}
A spectrum $Y$ is equivalent to $X_l$ if and only if $\HFp_*(Y) \cong W_l$ where $W_l=\HFp_*(X_l)$  is the Steenrod comodule $\Fp\{x_0,\ldots, x_{l-1}\}$ with $\deg{x_k}=2k(p-1)$ and Steenrod coaction given by $\Psi(x_k)=\xi_1\otimes x_{k-1}+\cdots$ for $k\geq 1$.
\label{X_l-unique}
\end{lem}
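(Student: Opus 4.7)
The plan is to exploit the hint in the sentence just before the lemma: realize $\HFp_*(X_l)$ as the homology of an appropriate skeleton of $\BP$ and invoke the previous lemma. Set $Z = \BP^{2(p-1)(l-1)}$. The work is to show $\HFp_*(Z) \cong W_l$ as Steenrod comodules, after which both $X_l \simeq Z$ and (for any hypothesized $Y$) $Y \simeq Z$ follow from the previous lemma.

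Using the standard description $\HFp_*(\BP) = \Fp[\xibar_1, \xibar_2, \ldots]$ with $|\xibar_i| = 2(p^i - 1)$, the key sparsity observation is that since $X_l$ is defined only for $l \leq p$, the cutoff satisfies $2(p-1)(l-1) \leq 2(p-1)^2 < 2(p-1)(p+1) = |\xibar_2|$. Thus the only monomials surviving into $\HFp_*(Z)$ are $1, \xibar_1, \xibar_1^2, \ldots, \xibar_1^{l-1}$, placed in degrees $2k(p-1)$ for $k = 0, \ldots, l-1$. For the coaction, $\xibar_1$ is primitive in the dual Steenrod algebra, so $\Delta(\xibar_1^k) = \sum_{i=0}^k \binom{k}{i} \xibar_1^i \otimes \xibar_1^{k-i}$. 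Choosing the rescaled basis $x_k = (-1)^k \xibar_1^k/k!$ (valid since $k < p$, so $k!$ is a unit in $\Fp$) and using $\xi_1 = -\xibar_1$ in degree $2(p-1)$, the linear-in-$\xi_1$ coefficient of $\Psi(x_k)$ becomes $1$, giving precisely $\Psi(x_k) = \xi_1 \otimes x_{k-1} + \cdots$. This identifies $\HFp_*(Z) \cong W_l$.

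Now apply the previous lemma. Since by construction $\HFp_*(X_l) = W_l \cong \HFp_*(Z)$, the lemma yields $X_l \simeq Z$. Similarly, for any finite $p$-complete spectrum $Y$ with $\HFp_*(Y) \cong W_l$, we obtain $Y \simeq Z$, hence $Y \simeq X_l$. The converse direction is immediate.

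I do not anticipate a real obstacle: the previous lemma does the heavy lifting of producing a map and upgrading a cohomology isomorphism to an equivalence, and the remaining work is a small sparsity-and-coaction computation in $\HFp_*(\BP)$. The only minor subtlety is keeping the coaction coefficients straight, which is handled by the $1/k!$ rescaling and requires $k < p$ — exactly the regime in which $X_l$ is defined.
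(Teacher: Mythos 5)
Your proposal is correct and takes exactly the same route as the paper: the paper states \Cref{X_l-unique} as a special case of the preceding skeleton-of-$\BP$ lemma, simply observing that $\BP^{2(p-1)(l-1)}$ has the required cohomology. You fill in the verification the paper leaves implicit — the sparsity observation that $2(p-1)(l-1) < |\xibar_2|$ rules out all monomials except powers of $\xibar_1$ in the relevant range, and the rescaling $x_k = (-1)^k\xibar_1^k/k!$ to normalize the coaction — both of which check out.
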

\section{The \texorpdfstring{$C_p$}{C\_p} action on \texorpdfstring{$E_*(X_l)$}{E\_*(X\_l)}}
\label{sec-E_*X_l}
\def\FMLGPS{\mathop{\mathrm{FmlGrps}}}
Set $n=p-1$ for the rest of the paper.
We begin this section with a brief review of the facts we need about the Morava stabilizer group. We then compute the $\KCp$ action on $E_*(X_l)/\mfrak$ and show that $E_*(X_p)$ is a free $E_*[C_p]$-module. We will deduce that $\EO\sm X_p \simeq \EhC$. Since $n^2$ is relatively prime to $p$, $\EhC$ is complex orientable.

Let $\FMLGPS$ be the category of pairs $(k, \Gamma)$ where $k$ is a perfect characteristic $p$ field and $\Gamma$ is a formal group over $k$. The morphisms $(k,\Gamma)\to(k',\Gamma')$ are pairs consisting of a field homomorphism $f\colon k\to k'$ and an isomorphism of formal groups $f^*\Gamma\to \Gamma'$. The Hopkins-Miller theorem says there is a functor $\FMLGPS\to E_{\infty}\text{-Rings}$ which sends a pair $(k,\Gamma)$ to the corresponding Morava $E$ theory $E(k,\Gamma)$. This implies that there is an action of the automorphism group $\Aut(\Gamma)$ on $E(k,\Gamma)$ by $E_{\infty}$ ring maps. The group $\G = \Aut(\Gamma)$ is called the Morava stabilizer group. See section 2 of \cite{bhattacharya} for a nice overview of the Morava stabilizer group.

The Morava stabilizer group of a height $n$ Morava $E$-theory contains elements of order $d$ if and only if the degree of $\Qp(\zeta_d)$ over $\Qp$ divides $n$ where $\zeta_d$ is a primitive $d$th root of unity.
In particular, $\Qp(\zeta_p)$ has degree $p-1$, so there are $p$-torsion elements in $\G$ if and only if $p-1$ divides $n$.
In this paper we study the simplest such case, when $n=p-1$. Let $E = E(\Fpn, \Gamma_n)$ where $\Gamma_n$ is the height $n$ Honda formal group over $\Fpn$ and let $\G = \Aut(\Gamma_n)$ be the corresponding Morava stabilizer group.
There is a $\G$-action on $E_*(Z) = \pi_*(L_{K(n)}E\sm Z)$ for any spectrum $Z$ by letting $\G$ act in the standard way on $E$ and trivially on $Z$.

There is an isomorphism $E_* \cong \W(\Fpn)\ps{u_1,\ldots,u_{n-1}}$. Let $\m=(p,u_1,\ldots,u_{n-1})$ be the maximal ideal of $E_*$ and let $K_* = E_*/\m = \Fpn[u^{\pm}]$. For $Z$ a torsion free spectrum, $E_*(Z)/\mfrak\cong K_*(Z)$ where $K$ is any Morava $K$-theory corresponding to $E$. Let $E_*E=\pi_*(L_{K(n)} E\sm E)$. There is an isomorphism $E_*E\cong \Homcts$ where for $g\in \G$ the evaluation map
\[\begin{tikzcd}
E_*E \rar["\cong"]  & \Homcts\rar["ev_{g}"] & E_*
\end{tikzcd}\]
is the image of the map
\[
\begin{tikzcd}
E\sm E \rar["g\sm id"] & E\sm E \rar["m"] & E
\end{tikzcd}\]
under the functor $\pi_*(L_{K(n)}(-))$.

Let $G$ be a maximal finite subgroup of $\G$ containing an element of order $p$. According to Corollary 1.30 and Theorem 1.31 of \cite{finite-subgroups}, any two such subgroups $G$ are conjugate in $\G$ and $G$ is abstractly isomorphic to the semidirect product $C_p\rtimes C_{n^2}$ where the action is given by the surjection $C_{n^2}\to C_n\cong \Aut(C_p)$. Let $\EO = E^{hG}$. For an $\EO$-module $M$ we write $\EEO_*(M) = \pi_*(E\sm_{\EO} M)$. There is an action of $G$ on $E$ by $\EO$-automorphisms so this gives a $G$ action on $\EEO_*(M)$ for any $M$. We will show in the next section that for any $\EO$-module is a relative Adams spectral sequence $H^*_G(\EEO_*(M))\Rightarrow \pi_*(M)$. Our plan is to use this Adams spectral sequence to understand $M$ so we will need to compute the $E_*[G]$-module structure on $\EEO_*(M)$. To allow explicit calculation, we compute the $K_*[G]$-module structure on $\EEO_*(M)/\mfrak$ and then use Nakayama's lemma to make the conclusions we need about $\EEO_*(M)$.

Let $\zeta\in C_p$ be a generator. There is an isomorphism $K_*[C_p]=K_*[\zeta]/(\zeta^p-1)\cong K_*[s]/(s^p)$ where the map sends $\zeta\mapsto s+1$. The coproduct is given by $\delta(s) = s\otimes 1 + 1\otimes s + s\otimes s$. Let $V_l$ be the cyclic module over $K_*[s]/(s^p)$ of length $l$.

\begin{prop}
\label{E*X_l/m=V_l}
$\E_*(X_l)/\m\cong V_l$ as $K_*[C_p]$-modules.
\end{prop}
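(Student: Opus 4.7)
The plan is to induct on $l$, using the cofiber sequence $X_{l-1} \to X_l \to S^{2(l-1)(p-1)}$ whose attaching map is $\alpha_1 \in \pi_{2p-3}(S^0)$. The base case $l=1$ is immediate: $X_1 = S^0$ and $K_*(S^0) = K_*$ carries the trivial $C_p$-action, because a $p$-torsion generator of $\G$ acts on the periodicity element $u$ by multiplication by a $p$-th root of unity, which reduces to $1$ modulo $\mfrak$. Hence $K_* \cong V_1$.

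For the inductive step, smash the cofiber sequence with $E$ and reduce modulo $\mfrak$. Since $\alpha_1$ vanishes in $\pi_*(E)$ and all spectra involved have even $E$-homology, the resulting long exact sequence collapses to a short exact sequence of $K_*[C_p]$-modules
\[
0 \to K_*(X_{l-1}) \to K_*(X_l) \to \Sigma^{2(l-1)(p-1)} K_* \to 0,
\]
with submodule $V_{l-1}$ by hypothesis and trivial $C_p$-action on the quotient. The isomorphism class of $K_*(X_l)$ is then determined by an extension class $\xi_l$ in $\Ext^1_{K_*[C_p]}(\Sigma^{2(l-1)(p-1)} K_*, V_{l-1})$, and I aim to show $\xi_l$ is the specific class whose extension is $V_l$ (rather than $V_{l-1} \oplus K_*$ or any shorter-cyclic-plus-free combination).

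The crux is identifying $\xi_l$ with a unit multiple of the image of $\alpha_1$ under the natural edge map $\pi_*(S^0) \to H^1(C_p, K_*) = \Ext^1_{K_*[C_p]}(K_*, K_*)$, post-composed with the inclusion of $K_*$ into the top-degree summand of $V_{l-1}$. Given this, $\xi_l$ is nonzero because $\alpha_1$ is detected by the generator of $H^1(C_p, K_{2p-2})$, which can be checked using the explicit cocycle for the $C_p$-action on $E_*$ coming from the formal group law (a $p$-torsion generator of $\G$ translates the series expansion of $u$ by a unit multiple of $u^p$ modulo higher terms). The main obstacle is verifying that the extension really hits the top generator of $V_{l-1}$ rather than a lower one; this is where the specific cell structure of $X_l$ as an iterated $\alpha_1$-extension gets used, via a compatibility check between the cellular attaching maps and the multiplicative structure of the homotopy fixed point spectral sequence. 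Once this is in place, $x_{l-1}$ becomes a cyclic generator with $s^{l-1} x_{l-1} \neq 0$ and $s^l x_{l-1} = 0$, yielding $K_*(X_l) \cong V_l$.
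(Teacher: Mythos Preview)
Your inductive approach via cofiber sequences is a reasonable alternative to the paper's direct computation, but it has a genuine gap at exactly the step you flag as ``the crux.'' First, a simplification you are missing: $\Ext^1_{K_*[C_p]}(K_*, V_{l-1}) \cong V_{l-1}/sV_{l-1}$ is one-dimensional over $K_*$ for $l\le p$, so there are no ``shorter-cyclic-plus-free'' alternatives and no question of ``hitting the top generator versus a lower one''---the extension is either split or $V_l$, full stop. Relatedly, $V_{l-1}$ is indecomposable, so there is no ``inclusion of $K_*$ into the top-degree summand of $V_{l-1}$'' as a $K_*[C_p]$-map; the only inclusion is the socle $K_*\{x_0\}\hookrightarrow V_{l-1}$, and push-forward along it induces the \emph{zero} map on $\Ext^1$ once $l>2$. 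What actually works is the quotient $V_{l-1}\twoheadrightarrow K_*\{x_{l-2}\}$, which identifies $\Ext^1(K_*,V_{l-1})\cong\Ext^1(K_*,K_*)$ and reduces the whole induction to the single case $l=2$.

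The real gap is then the identification of $\xi_2$ with the class in $H^1(C_p,K_*)$ detecting $\alpha_1$ in the homotopy fixed point spectral sequence. These live in the same group but are defined very differently: one is the extension class of the $K_*[C_p]$-module $K_*(C\alpha_1)$, the other is a filtration-one representative of a homotopy class in $\pi_*(\EO)$. Equating them is a geometric-boundary-theorem statement that you have not supplied; ``a compatibility check between the cellular attaching maps and the multiplicative structure'' is precisely where the missing content hides. Moreover, your proposed verification that $\alpha_1$ is detected---``a $p$-torsion generator translates the coordinate by a unit multiple of $u^p$''---is exactly the formal-group-law input the paper uses, and once you have that input the paper's route is shorter. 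The paper bypasses all extension bookkeeping: it lifts the $P(1)_*$-coaction on $\HFp_*(X_l)$ to the $\BP_*\BP$-coaction on $\BP_*(X_l)$, then applies Lemma~\ref{stabilizer-action-coaction} to read off the $\zeta$-action on $E_*(X_l)/\m$ directly. Since $\Psi(x_k)=1\otimes x_k - t_1\otimes x_{k-1}$ modulo lower cells and $t_1(\zeta)$ is a unit, the matrix of $\zeta$ is unipotent upper-triangular with units on the superdiagonal, hence a single Jordan block of length $l$; this handles all $l$ at once with no induction.
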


To prove this, we are going to pass from information about the Steenrod coaction on $\HFp_*(Z)$ to information about the Morava stabilizer group action on $E_*(Z)$ through the $\BP_*\BP$-coaction on $\BP_*(Z)$ by considering the maps $\BP_*(Z)\to \HFp_*(Z)$ and $\BP_*(Z)\to E_*(Z)$.

If $Z$ is a torsion free connective spectrum then $\BP_*(Z)$ is $\BP_*$-free so $\HFp_*(Z) = \Fp\otimes_{\BP_*} \!\BP_*(Z)$ and $E_*(Z) = E_*\otimes_{\BP_{*}} \!\BP_*(Z)$. Let $\phi\colon \BP\to E$ and $\pi\colon \BP\to \HFp$ be the maps induced by the complex orientations of $E$ and $\HFp$.
\[\begin{tikzcd}
\BP \rar["\pi"]\dar["\phi"] & \HFp  &  \BP_*\BP \rar["\pi"]\dar["\phi"] & \HFp_*\HFp\\
E                          &       &  E_*E \mathrlap{{}=\Homcts}
\end{tikzcd}\]

If $\BP_*(Z) \cong \BP\{\supBP{z_i}\}_{i\in S}$, we write $\supE{z_i} = \phi(\supBP{z_i})$ and $\supFp{z_i} = \pi(\supBP{z_i})$ so then $E_*(Z) \cong E_*\{\supE{z_i}\}_{i\in S}$ and $\HFp_*(Z)\cong \F_p\{\supFp{z_i}\}_{i\in S}$. For $E$ some cohomology theory, let $\supE{I_d}(Z) = \ker(E_*(Z)\to E_*(Z_{(d)}))$ where $Z_{(d)}$ is the cofiber of the inclusion of the $(d-1)$-skeleton of $Z$.

Consider the map $\BP_*\BP\to A_*$. This sends $t_1\mapsto -\xi_1$. If $Z$ is torsion free and $\supFp{z_{k}}\in \HFp_*(Z)$ has a nontrivial $P^1_*$ action $P^1_*(\supFp{z_{k}}) = \supFp{z_{k-2n}}$ then
\begin{align*}
\Psi(\supFp{z_{k}}) &= 1\otimes \supFpAlign{z_{k}} + \xi_1\otimes\supFpAlign{z_{k-2n}} \pmod{A_*\otimes_{\Fp} \supFp{I_{k-2n}}(Z)}.\\
\intertext{In this case there are lifts $\supBP{z_k}$, $\supBP{z_{k-2n}}\in \BP_*(Z)$ and}
\Psi(\supBP{z_{k}})&= 1\otimes \supBPAlign{z_{k}} - t_1\otimes \supBPAlign{z_{k-2n}} \pmod{\BP_*\BP\otimes_{\BP_*}\supBP{I_{k-2n}}(Z)}.
\end{align*}
For $g\in\G$ and $\theta\in \BP_*\BP$ we can evaluate $\theta(g)\in E_*$ using the map $\BP_*\BP\to E_*E=\Homcts$. A strict automorphism $g$ of a $p$-typical formal group $\Gamma$ corresponds to a certain power series, namely $g(s)=s+^{\Gamma}\sum^{\Gamma} a_i s^{p^i} \in E_*\ps{s}$. Then $t_i(g)=a_i$.

\begin{lem}
\label{stabilizer-action-coaction}
If $g\in \G$ and $\supBP{z_{k}}\in \BP_{k}(Z)$ has coaction $\Psi(\supBP{z_{k}}) = \sum \theta_i\otimes \supBP{z_{i}}$ where $\theta_i\in \BP_*\BP$, then $g_*(\supE{z_{k}})=\sum_{i} \theta_i(g)\supE{z_i}$.
\end{lem}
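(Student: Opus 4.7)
The plan is to exploit the naturality of the coaction along the complex orientation $\phi\colon \BP\to E$, together with the description of the $\G$-action on $E_*(Z)$ via the $E_*E$-coaction and the evaluation isomorphism $E_*E\cong \Homcts$ recalled just above the statement.

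First I would recall the precise way in which the $\G$-action on $E_*(Z)=\pi_*(L_{K(n)}E\sm Z)$ arises: for $g\in \G$, the map $g_*\colon E_*(Z)\to E_*(Z)$ is induced by $g\sm\mathrm{id}\colon E\sm Z\to E\sm Z$. Smashing the right unit $\eta_R\colon E\to E\sm E$ with $Z$ shows that this is equal to the composite
\[
E\sm Z \xrightarrow{\eta_R\sm \mathrm{id}} E\sm E\sm Z \xrightarrow{g\sm \mathrm{id}\sm \mathrm{id}} E\sm E\sm Z \xrightarrow{m\sm \mathrm{id}} E\sm Z,
\]
which on homotopy factors through the $E_*E$-coaction $\psi\colon E_*(Z)\to E_*E\otimes_{E_*} E_*(Z)$ followed by $ev_g\otimes \mathrm{id}$. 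Under the isomorphism $E_*E\cong \Homcts$ this is exactly how the map $ev_g$ was defined in the excerpt above, so for any $\eta\in E_*E$, the element $\eta(g)\in E_*$ is by definition $ev_g(\eta)$.

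Next I would invoke the naturality square for coactions along the ring map $\phi\colon \BP\to E$:
\[
\begin{tikzcd}
\BP_*(Z) \rar["\Psi"]\dar["\phi_*"] & \BP_*\BP\otimes_{\BP_*}\BP_*(Z)\dar["\phi\otimes\phi_*"]\\
E_*(Z) \rar["\psi"] & E_*E\otimes_{E_*} E_*(Z)
\end{tikzcd}
\]
which commutes because both coactions come from smashing $Z$ with the right unit on the corresponding ring spectrum, and $\phi$ is a map of ring spectra. Thus starting from $\Psi(\supBP{z_k}) = \sum_i \theta_i\otimes \supBP{z_i}$ and going down and right gives $\psi(\supE{z_k}) = \sum_i \phi(\theta_i)\otimes \supE{z_i}$ in $E_*E\otimes_{E_*}E_*(Z)$. (The notation $\theta_i(g)$ in the statement tacitly uses the composite $\BP_*\BP\to E_*E\cong \Homcts$, so $\theta_i(g)=ev_g(\phi(\theta_i))$.)

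Finally, applying $ev_g\otimes\mathrm{id}$ to this equation yields
\[
g_*(\supE{z_k}) = (ev_g\otimes\mathrm{id})\psi(\supE{z_k}) = \sum_i ev_g(\phi(\theta_i))\,\supE{z_i} = \sum_i \theta_i(g)\,\supE{z_i},
\]
which is the claim. There is no real obstacle here; the only subtlety worth flagging is that everything must be interpreted in the appropriate completed sense (since $E_*E$ is a completion of $E_*\otimes_{\BP_*}\BP_*\BP\otimes_{\BP_*} E_*$), but this presents no issue because we are evaluating at a single group element $g$ and the sum $\sum_i \theta_i\otimes \supBP{z_i}$ has the standard interpretation as a continuous map to $E_*(Z)$.
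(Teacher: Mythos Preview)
Your proposal is correct and follows essentially the same approach as the paper: both arguments factor the $g$-action on $E_*(Z)$ as the $E_*E$-coaction followed by $ev_g\otimes\mathrm{id}$, then use naturality of the coaction along the ring map $\phi\colon\BP\to E$ to transport the $\BP_*\BP$-coaction formula to the desired conclusion. The paper draws the same two commutative diagrams you invoke, just with slightly different wording (``unit map in the middle'' rather than ``right unit'').
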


\begin{proof}
Recall that $E_*E \cong \Hom(\G, E_*)$ where for each $g\in \G$ there is a commutative diagram:
\[\begin{tikzcd}
\pi_*(L_{K(n)}(E\sm E)) \dar["\pi_*(g\sm id_E)"'] \rar[equal] & \Hom(\G,E_*)\dar["ev_{g}"]\\
\pi_*(L_{K(n)}(E\sm E)) \rar["\pi_*(m)"] & E_*
\end{tikzcd}\]

Consider the maps
\[\begin{tikzcd}
E\sm Z \rar["\Psi",yshift=0.2em] & \lar["m\sm id_{Z}",yshift=-0.2em] E\sm E \sm Z
\end{tikzcd}\]
where $\Psi\colon E\sm Z = E\sm S^0\sm Z \to E\sm E\sm Z$ is the unit map in the middle. If we let $\G$ act by the standard action on the leftmost $E$ factor and trivially on the other factors, these maps are $\G$-equivariant. Because the left unit map $E_*\to E_*E$ is flat, there is an isomorphism $\pi_*(E\sm E\sm Z)\cong E_*E\otimes_{E_*} E_*(Z)$. Thus, the action of $g$ on $E_*(Z)$ factors as:
\[\begin{tikzcd}
E_*(Z)\dar["\Psi"'] \rar["g_*"] & E_*(Z)\\
E_*E \otimes_{E_*}E_*(Z)\rar["g_*\otimes 1"]& E_*E\otimes_{E_*}E_*(Z)\uar["m_*"']
\end{tikzcd}\]
Since $m_*\circ(g_*\otimes 1) = ev_{g}$ under the isomorphism $E_*E\cong \Hom(\G,E_*)$, we see that $g_*(z) = (ev_{g} \otimes 1) \circ \Psi(z)$.

We have a commutative diagram:
\[\begin{tikzcd}
\BP_*(Z)\rar["\Psi"]\dar &  \BP_*\BP \otimes_{\BP_*} \BP_*(Z)\dar\\
E_*(Z)\rar["\Psi"]\drar["g_*"'] &  E_*E \otimes_{\BP_*} E_*(Z)\dar["ev_{g}\otimes 1"]\\
                              & E_*(Z)
\end{tikzcd}\]
We deduce that $g_*(\supE{z}) = (ev_{g}\otimes \phi)(\Psi(\supBP{z}))$ as desired.
\end{proof}

Recall that $\zeta\in C_p$ is a generator. Let $v = t_1(\zeta) \in E_{2n}$. It is well known that $v$ is a unit (see for instance \cite[bottom of page 438]{odd-primary-arf}). Specializing \Cref{stabilizer-action-coaction} to the case we care about, if
\begin{align*}
\Psi(\supBP{z_{k}}) &= 1\otimes \supBP{z_{k}} + t_1\otimes \supBP{z_{k-2n}} \pmod{\BP_*\BP\otimes_{\BP_*} \supBP{I_{k-2n}}(Z)}\\
\intertext{then}
\zeta_*(\supE{z_{k}}) &= \phantom{1\otimes{}} \supCohThyAlign{E}{\BP}{z_{k}} + \phantom{{}_{1}\otimes\,} v\supE{z_{k-2n}}  \phantom{\BP_*\BP\otimes_{\BP_*}{}}\pmod{\supE{I_{k-2n}}(Z)}.
\end{align*}

\begin{proof}[Proof of \Cref{E*X_l/m=V_l}]
The spectrum $X_l$ is torsion free so the above discussion applies. Recall that
\[\HFp_*X_p\cong \Fp\{x_0,\ldots,x_{p-1}\}\]
where $|x_k| = 2kn$. For $0\leq k<p-1$,
\begin{align*}
\Psi(\supFp{x_{k}}) &= 1\otimes \supFpAlign{x_k} + \xi_1\otimes \supFpAlign{x_{k-1}} \pmod{A_*\otimes_{\Fp} \supFp{I}_{2(k-1)n}(Z)}\\
\intertext{This implies that in $\BP_*X_p$,}
\Psi(\supBP{x_{k}}) &= 1\otimes \supBPAlign{x_{k}} - t_1\otimes \supBPAlign{x_{k-1}} \pmod{\BP_*\BP\otimes_{\BP_*} I^{\BP}_{2(k-1)n}(Z)}\\
\intertext{and the action of $\zeta$ on $E_*(X_p)$ is given by}
\zeta_*(\supE{x_k})        &= \phantom{1\otimes{}}\supEAlign{x_{k}}         -   \phantom{{t}_{1}\otimes{}\!}\supEAlign{x_{k-1}} \pmod{I_{2(k-1)n}^{E}(Z)}.
\end{align*}
In matrix form when $p = l = 5$ this looks like:
\[\begin{pmatrix}
1 & v & * & * & * \\
0 & 1 & v & * & * \\
0 & 0 & 1 & v & * \\
0 & 0 & 0 & 1 & v \\
0 & 0 & 0 & 0 & 1 \\
\end{pmatrix}\]
This matrix is conjugate to a length $l$ Jordan block, so $E_*(X_l)/\m$ does not split and hence it is a length $l$ indecomposable $\KCp$ representation.
\end{proof}

In particular, $\zeta$ acts trivially on $K_* = K_*(X_1) = V_1$ and $K_*(X_p) = V_p$ is a free $K_*[C_p]$-module. By Nakayama's lemma, we deduce that $E_*(X_p)$ is a free $E_*[C_p]$-module.
\begin{cor}
$\E_*(X_p)$ is a free $E_*[C_p]$-module.
\label{EXp-free}
\end{cor}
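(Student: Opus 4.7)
The plan is a direct Nakayama-lemma lift from the residue field computation just completed. \Cref{E*X_l/m=V_l} identifies $E_*(X_p)/\mfrak \cong V_p$ as a $\KCp$-module. Since $V_p$ is the length $p$ cyclic module over $K_*[s]/(s^p) = \KCp$, it is free of rank one: $V_p \cong \KCp$ as a $\KCp$-module, via a generator corresponding to $x_0 \in \E_*(X_p)/\mfrak$.

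First I would choose a lift $\widetilde{x}_0 \in \E_*(X_p)$ of the generator $x_0 \in \E_*(X_p)/\mfrak$. The $E_*[C_p]$-linear map
\[
\varphi\colon E_*[C_p] \longrightarrow \E_*(X_p),\qquad 1 \longmapsto \widetilde{x}_0,
\]
becomes an isomorphism after reduction modulo $\mfrak$, since by \Cref{E*X_l/m=V_l} the reduced map $K_*[C_p] \to V_p$ sends $1$ to a generator of a free rank one $\KCp$-module.

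Next I would note that $\E_*(X_p)$ is a finitely generated $E_*$-module: $X_p$ is a finite spectrum with cells only in even dimensions and $E_*$ is concentrated in even dimensions, so by a standard cellular argument $\E_*(X_p)$ is in fact free of rank $p$ over $E_*$. Since $E_*$ is a complete Noetherian local ring with maximal ideal $\mfrak$, Nakayama's lemma applied to the cokernel of $\varphi$ shows that $\varphi$ is surjective.

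Finally, both source and target are free $E_*$-modules of the same finite rank $p$. A surjection between free modules of equal finite rank over a commutative Noetherian ring is an isomorphism, so $\varphi$ is an isomorphism of $E_*[C_p]$-modules. This exhibits $\E_*(X_p)$ as a free $E_*[C_p]$-module on the single generator $\widetilde{x}_0$. There is no real obstacle here; the only substantive input is that $V_p$ is the free cyclic $\KCp$-module of rank one, which is immediate from its length being equal to the order of $C_p$.
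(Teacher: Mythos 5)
Your proposal is correct and follows exactly the same route as the paper: reduce modulo $\mfrak$, recognize $V_p$ as the free rank-one $\KCp$-module by \Cref{E*X_l/m=V_l}, and lift via Nakayama's lemma. The paper compresses this to a single sentence, while you have supplied the standard details (choosing a lift of a generator, surjectivity from Nakayama, and injectivity from a rank count), all of which check out.
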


\begin{lem}
If $M$ is a finite $\EO$-module then $M \simeq (E\sm_{\EO} M)^{hG}$.
\end{lem}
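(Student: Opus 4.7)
The plan is to prove this by induction on the cell structure of $M$. Let $F$ be the functor $F(M)=(E\sm_\EO M)^{hG}$, where $G$ acts on $E\sm_\EO M$ through its action on the $E$ factor. The unit map $M\to E\sm_\EO M$ is $G$-equivariant when $M$ is given the trivial $G$-action, so it factors through a natural $\EO$-module map $\eta_M\colon M\to F(M)$. The base case of the induction is $M=\EO$, for which $F(\EO)=E^{hG}$ and $\eta_\EO$ is an equivalence by the very definition $\EO := E^{hG}$.

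For the inductive step, I would verify that $F$ sends cofiber sequences to cofiber sequences. The functor $E\sm_\EO(-)$ is a left adjoint and so preserves cofibers. The homotopy fixed points functor $(-)^{hG}$ is a homotopy limit over $BG$ and so preserves fiber sequences; in the stable setting these coincide with cofiber sequences, so $F$ is exact. Given any cofiber sequence $M'\to M\to M''$ of $\EO$-modules, naturality of $\eta$ yields a map between cofiber sequences, and the two-out-of-three property for equivalences shows that the class of $\EO$-modules on which $\eta_M$ is an equivalence is closed under cofiber sequences. It is also manifestly closed under shifts (since $F$ commutes with shifts) and under retracts (since retracts of equivalences are equivalences).

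Since every finite $\EO$-module is constructed from shifts of $\EO$ by finitely many cofiber sequences and retracts, combining these closure properties with the base case completes the proof. The main potential obstacle is formalising the exactness of $(-)^{hG}$ in the appropriate category of $E$-module $G$-spectra and verifying compatibility with the natural transformation $\eta$; once the right foundational setting is chosen this is essentially formal, but it is the key technical point to pin down. Notably, this particular lemma can be proved without invoking the computation $E_*(X_p) \cong E_*[C_p]$; that input is what powers the subsequent Galois and Adams spectral sequence results built on top of it.
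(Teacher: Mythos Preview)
Your proposal is correct and matches the paper's argument essentially verbatim: the paper constructs the same natural transformation $M\to (E\sm_{\EO}M)^{hG}$, observes it is an equivalence for $M=\EO$ by definition, notes that the target functor is exact, and concludes by induction over finite $\EO$-modules. Your write-up simply spells out in more detail why the functor is exact and why the induction goes through.
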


\begin{proof}
Let $G$ act on $E\sm_{\EO} M$ by $E$ automorphisms over $\EO$. There is a natural equivariant map $M = \EO\sm_{\EO} M \to E\sm_{\EO} M$ where $G$ acts trivially on $M$, so we get a natural transformation $M \to (E\sm_{\EO}M)^{hG}$. When $M=\EO$ this is an equivalence by definition. The functor $M\mapsto \pi_*(E\sm_{\EO}M)^{hG}$ is exact, so it follows that this natural transformation is an equivalence on all finite $\EO$-modules.
\end{proof}

\begin{cor}
\label{EO-sm-Xp-cx-orientable}
$\EO\sm X_p\simeq \EhC$
\end{cor}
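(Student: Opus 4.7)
The plan is to construct an $\EO$-module map $\phi\colon \EO\sm X_p\to\EhC$ extending the unit $S^0\to\EhC$, and then to verify $\phi$ is an equivalence by computing both sides via homotopy fixed point spectral sequences.

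Since $|C_{n^2}|=n^2$ is coprime to $p$, the HFPSS for $\EhC$ collapses at $E_2$, yielding $\pi_*(\EhC)\cong E_*^{C_{n^2}}$ concentrated in even degrees. The spectrum $X_p$ is built from $S^0$ by iteratively attaching cells along (representatives of Toda brackets in) $\alpha_1$, and the obstructions to extending the unit map $S^0\to\EhC$ cell-by-cell up $X_p$ live in $\pi_{2k(p-1)-1}(\EhC)$ for $1\leq k\leq p-1$. All of these groups vanish by the parity constraint, so I obtain a map $X_p\to\EhC$ whose adjoint $\phi$ sends the bottom-cell class $x_0$ to the unit $1$.

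To check $\phi$ is an equivalence, I apply the preceding Lemma to identify $\EO\sm X_p\simeq (E\sm X_p)^{hG}$ and compute its HFPSS via Hochschild--Serre for the extension $1\to C_p\to G\to C_{n^2}\to 1$. By \Cref{EXp-free}, $E_*(X_p)$ is a free $E_*[C_p]$-module, so $H^*(C_p,E_*(X_p))$ is concentrated in filtration $0$ at $E_*\cdot x_0$; because $x_0$ is the image of the unit, it is $C_{n^2}$-fixed, so $H^*(C_{n^2},E_*\cdot x_0)$ collapses (coprime order) to $E_*^{C_{n^2}}\cdot x_0$. Thus $\pi_*(\EO\sm X_p)\cong E_*^{C_{n^2}}\cong \pi_*(\EhC)$, and $\phi_*$ sends the distinguished generator $x_0$ to the distinguished generator $1$.

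The main obstacle is confirming that the induced map $\phi_*\colon E_*^{C_{n^2}}\to E_*^{C_{n^2}}$ is an isomorphism rather than merely a map between isomorphic groups. I would close this gap by base changing along the faithful extension $\EO\to E$: under the equivalence of finite $\EO$-modules with $G$-equivariant $E$-modules, $\EO\sm X_p$ corresponds to $E\sm X_p$ and $\EhC$ corresponds to $\mathrm{Ind}_{C_{n^2}}^G E$ (via Shapiro), so the verification reduces to checking that the induced $G$-equivariant map $E\sm X_p\to\mathrm{Ind}_{C_{n^2}}^G E$ is an equivalence. Using $u$-periodicity to trivialize the grading shifts in $E_*(X_p)$ and the observation that $s^{p-1}x_{p-1}=v^{p-1}x_0$ corresponds to a norm element $\sum_i e_i$ in $\mathrm{Ind}_{C_{n^2}}^G E$, the comparison of $G$-actions becomes a direct matching that pins down $\phi_*$ as the identity.
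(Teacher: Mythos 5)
Your approach differs from the paper's in a way that introduces a gap. The paper does not construct a map and then verify it is an equivalence; instead, using the preceding lemma $\EO\sm X_p\simeq (E\sm X_p)^{hG}$, it identifies the intermediate spectrum directly: since $E_*(X_p)$ is a free $E_*[C_p]$-module (\Cref{EXp-free}), one can choose the splitting $E\sm X_p\simeq\bigvee_p E$ to be $C_p$-equivariant with the permutation action, so that $(E\sm X_p)^{hC_p}\simeq E$ as $C_{n^2}$-spectra, and hence $\EO\sm X_p\simeq E^{hC_{n^2}}=\EhC$. Your Hochschild--Serre computation parallels this at the level of $E_2$ pages, and your obstruction-theoretic construction of $\phi$ is sound (the obstructions do vanish because $\pi_*\EhC$ is even), but neither step identifies the spectra; one produces the map, the other only shows the homotopy groups are abstractly isomorphic.

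The gap is exactly where you flag it, and your proposed fix is circular in the paper's logical order. Base changing along a ``faithful'' $\EO\to E$ and invoking the equivalence between finite $\EO$-modules and $G$-equivariant $E$-modules (together with the Shapiro identification $E\sm_{\EO}\EhC\simeq\mathrm{Ind}_{C_{n^2}}^{G}E$) are consequences of \Cref{EO-to-E-faithful} and \Cref{EO-EhC-Galois}, and both of those are proved in the paper \emph{using} $\EO\sm X_p\simeq\EhC$: \Cref{EO-to-E-faithful} smashes the cofiber sequence of \Cref{cofib-beta-X_p} with $\EO$ and reads off $\EhC$-nilpotence from the equivalence, and the Galois proof begins ``Since $R\simeq\EO\sm X_p$, we have an equivalence of left $R$-modules $R\sm_{\EO}R\simeq R\sm X_p$.'' Without these, $\phi_*$ is only an $\EO_*$-linear map between two copies of $E_*^{C_{n^2}}$ sending $x_0\mapsto 1$, and since $\EO_*\to E_*^{C_{n^2}}$ is far from surjective, linearity over $\EO_*$ does not pin $\phi_*$ down. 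The non-circular route is the paper's: realize the rank-one fixed-point computation $H^0(C_p,E_*(X_p))=E_*\cdot x_0$ as an equivalence $(E\sm X_p)^{hC_p}\simeq E$ by noting that a choice of $E_*[C_p]$-basis produces a $C_p$-equivariant $\pi_*$-isomorphism $\bigvee_p E\to E\sm X_p$, and then take $C_{n^2}$-fixed points. Your map $\phi$ then automatically agrees with this equivalence because both are determined by the unit on the bottom cell, but the equivalence must be produced first, not verified after.
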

Since $n^2$ is relatively prime to $p$, $\EhC$ is complex orientable.
\begin{proof}
$\EO\sm X_p\simeq (E\sm X_p)^{hG}$. Now $E\sm X_p\simeq \bigvee_{p}E$ and since $\E_*(X_p)$ is a free $E_*[C_p]$-module, this equivalence can be chosen to be $C_p$ equivariant, where the action of $C_p$ on $\bigvee_{p}E$ is given by permuting the $p$ factors. It follows that
\[(E\sm X_p)^{hC_p}\simeq \left(\bigvee_{p}E\right)^{\!\!hC_p}\simeq E\]
and so
\[\EO\sm X_p \simeq \left(E\sm X_p\right)^{hG} \simeq \left(\left(E\sm X_p\right)^{hC_p}\right)^{\!\hCn}\simeq \EhC.\qedhere\]
\end{proof}

\section{The map \texorpdfstring{$\EO\to E$}{EO --> E} is Galois and the \texorpdfstring{$E$}{E}-based Adams spectral sequence for \texorpdfstring{$\EO$}{EO}-modules}
\label{sec-galois}
Here we present a proof that the maps $\EO\to \EhC$ is a Galois extension.
This is a special case of \cite[Theorem 5.4.4(b)]{rognes-galois} which Rognes attributes to Devinatz \cite{devinatz-homotopy-fixed-point-spectra}.
We wanted to prove that $EO\to E$ is Galois, but failed to do so. We cite Devinatz for this.
We then conclude that the $E$-based Adams spectral sequence is strongly convergent for $\EO$-modules and has $E_2$ page given by group cohomology $H^*_{C_p}(\EEO_*(M))\Rightarrow \pi_*(M)$.
The $E_2$ page and convergence of this spectral sequence are also due to Devinatz \cite[Corollary 3.4]{devinatz-homotopy-fixed-point-spectra}.
Recall that $n=p-1$.

\begin{defn}[Rognes {\cite[Definition 4.1.3]{rognes-galois}}]
A map $R\to S$ of $E_{\infty}$ ring spectra is an $E$-local $G$-Galois extension for a discrete group $G$ if:
\begin{enumerate}
\item $G$ acts on $S$ via $R$-algebra maps.
\item The natural map $i\colon R\to S^{hG}$ is an $E$-equivalence.
\item The map $h\colon S\sm_{R} S\to F(G_+, S)$ adjoint to
\[\begin{tikzcd}[column sep = large]
G_+\sm S\sm_{R}S\rar["\textup{act}\sm id"] & S\sm_{R}S \rar["\textup{mult}"] & S
\end{tikzcd}\]
is an $E$-equivalence.
\end{enumerate}
\end{defn}
If we let $G$ act on the left $S$ factor on $S \sm_{R} S$ and by precomposition on $F(G_+,S)$, the map $h$ is an $S[G]$-algebra map. If $h$ is an equivalence of spectra, it is automatically also an equivalence of $S[G]$-modules.

\begin{defn}[{\cite[Definition 4.3.1]{rognes-galois}}]
Let $R$ be an $E_{\infty}$ ring spectrum. An $R$-module $N$ is \emph{faithful} if any $R$-module $M$ such that $N\sm_{R}M\simeq 0$ is already zero. A map $R\to S$ of $E_{\infty}$ rings is \emph{faithful} if $S$ is faithful as an $R$-module.
\end{defn}

\def\Ccal{\mathcal{C}}
\def\Ncal{\mathcal{N}}
\begin{defn}[{\cite[Definition 3.7]{bousfield-localization}}]
Let $R\to S$ be a map of homotopy associative ring spectra. The category of $S$-nilpotent $R$ modules is the smallest subcategory $\Ncal$ of $R$-modules such that
\begin{enumerate}
\item $S\in \Ncal$
\item If $X\in \Ncal$ and $Y$ is a spectrum then $X\sm Y\in\Ncal$.
\item If $X\to Y\to Z$ is a cofiber sequence in $R$-modules and two of $X$, $Y$, and $Z$ are in $\Ncal$ then so is the third.
\item If $X\in\Ncal$ and $Y$ is a retract of $X$ then $Y\in\Ncal$.
\end{enumerate}
$R$ is $S$-nilpotent if $R$ is an $S$-nilpotent $R$-module.
\end{defn}

\begin{lem}
Let $R\to S$ be a map of homotopy associative ring spectra and suppose that $f\colon \susp^d R\to R$ is a nilpotent self map of $R$. Then $C(f)\sm_{R} M$ is $S$-nilpotent if and only if $M$ is.
\end{lem}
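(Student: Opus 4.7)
The plan is to prove the two implications separately, using only the closure properties in the definition of the $S$-nilpotent subcategory together with the octahedral axiom applied to iterated powers of $f$.

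For the easy direction, assume $M$ is $S$-nilpotent. Smashing the defining cofiber sequence $\susp^d R \to R \to C(f)$ with $M$ over $R$ yields a cofiber sequence of $R$-modules
\[\susp^d M \to M \to C(f)\sm_R M.\]
Both $\susp^d M\simeq M\sm S^d$ and $M$ are $S$-nilpotent, the former by closure under smashing with spectra, so the two-out-of-three cofiber property gives that $C(f)\sm_R M$ is $S$-nilpotent.

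For the converse, assume $C(f)\sm_R M$ is $S$-nilpotent and let $f_M = f\sm_R \mathrm{id}_M \colon \susp^d M \to M$. Since $f$ is nilpotent, so is $f_M$; fix $N$ with $f_M^N = 0$. The key input is the octahedral axiom applied to the factorization $f_M^k = f_M^{k-1}\circ \susp^{(k-1)d} f_M$, which yields a cofiber sequence
\[\susp^{(k-1)d}\bigl(C(f)\sm_R M\bigr)\to C(f_M^k)\to C(f_M^{k-1}).\]
Here the leftmost term is the cofiber of $\susp^{(k-1)d} f_M$, identified with $\susp^{(k-1)d} C(f_M)$ and then with $\susp^{(k-1)d}(C(f)\sm_R M)$ using that smashing with $M$ over $R$ is exact. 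I would prove by induction on $k$ that each $C(f_M^k)$ is $S$-nilpotent: the base case $k=1$ is the hypothesis, the leftmost term is $S$-nilpotent by closure under smashing with spectra, the rightmost by the inductive hypothesis, and then two-out-of-three controls the middle.

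Taking $k=N$ makes $f_M^N$ the zero map, whose cofiber splits as $C(f_M^N)\simeq M\vee\susp^{Nd+1} M$. Hence $M$ is a retract of the $S$-nilpotent $R$-module $C(f_M^N)$ and is itself $S$-nilpotent by closure under retracts. The only real obstacle is the octahedral bookkeeping and keeping the suspension shifts straight; once the tower of cofibers is set up, the induction and the retract argument are essentially forced.
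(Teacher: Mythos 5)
Your proof is correct and follows essentially the same route as the paper's: the easy direction via the cofiber sequence $\susp^d M \to M \to C(f)\sm_R M$, and the converse by inductively applying the octahedral axiom to the factorization of iterated powers of $f$ to show each $C(f^k)\sm_R M$ is $S$-nilpotent, then using that a sufficiently high power is null to exhibit $M$ as a retract. The only cosmetic difference is that you work with $f_M = f\sm_R\mathrm{id}_M$ and $C(f_M^k)$ while the paper works directly with $C(f^k)\sm_R M$; these are identified by exactness of $-\sm_R M$, so the arguments coincide.
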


\begin{proof}
If $M$ is $S$-nilpotent, then $\susp^d M\to M \to C(f)\sm_{R} M$ is a cofiber sequence, and since both $M$ and $\susp^d M$ are $S$-nilpotent, so is $C(f)\sm_{R} M$. Conversely, suppose that $C(f)\sm_{R} M$ is $S$-nilpotent. We show by induction that $C(f^i)\sm_{R}M$ is $S$-nilpotent for all $i$ by induction. Suppose that $C(f^j)\sm_{R}M$ is $S$-nilpotent for $j\leq i$. The octahedral axiom gives us the following diagram, where the straight lines are all cofiber sequences:
\[\begin{tikzcd}[column sep = small, row sep = small]
&&&C(f^i)\sm_{R}M\\[6pt]
\susp^{(i+1)d}M\drar["\susp^{id}f"'] \ar[rr,"f^{(i+1)}"] && M \urar  \rar & C(f^{i+1})\sm_{R}M\uar \\[-4pt]
  & \susp^{id}M\urar["f^{i}"'] \ar[drr] &&\\[14pt]
  &&& \susp^{id}C(f)\sm_{R}M\ar[uu]
\end{tikzcd}\]
Since $C(f^i)\sm_{R}M$ and $C(f)\sm_{R}M$ are $S$-nilpotent, $C(f^{i+1})\sm_{R}M$ is $S$-nilpotent too. Because $f$ is nilpotent, $f^i$ is null for large enough $i$. Thus, $M$ is a retract of an $S$-nilpotent spectrum $C(f^{i})\sm_{R}M\simeq M\wsum M$ and so $M$ is $S$-nilpotent.
\end{proof}

\begin{lem}
Let $R\to S$ be a map of $E_{\infty}$ ring spectra and suppose that $R$ is $S$-nilpotent. Then the map $R\to S$ is faithful.
\end{lem}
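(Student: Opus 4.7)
The plan is to assume $M$ is an $R$-module with $S\sm_R M\simeq 0$ and show $M\simeq 0$ by a ``class of modules'' argument against the inductive definition of $S$-nilpotence. Specifically, I would introduce the full subcategory
\[
\mathcal{C} = \Set{N \in R\text{-Mod} \given N\sm_R M \simeq 0},
\]
and prove that $\mathcal{C}$ satisfies the four closure properties in the definition of $S$-nilpotent $R$-modules; since $S\in\mathcal{C}$ by hypothesis, this forces $\mathcal{C}$ to contain every $S$-nilpotent $R$-module, and in particular $R$ itself, giving $M\simeq R\sm_R M\simeq 0$.

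The verification of closure is straightforward and uses only that $(-)\sm_R M$ is an exact functor from $R$-modules to spectra which commutes with arbitrary spectrum-level smash products. For axiom (1), $S\sm_R M\simeq 0$ is the hypothesis. For axiom (2), if $N\in\mathcal{C}$ and $Y$ is any spectrum, then $(N\sm Y)\sm_R M \simeq (N\sm_R M)\sm Y \simeq 0\sm Y\simeq 0$. For axiom (3), if $X\to Y\to Z$ is a cofiber sequence of $R$-modules, smashing with $M$ over $R$ produces a cofiber sequence of spectra; if two of the three terms are contractible then so is the third, giving the two-out-of-three property for $\mathcal{C}$. For axiom (4), a retract of $0$ is $0$, so $\mathcal{C}$ is closed under retracts.

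Since the four axioms cutting out the category of $S$-nilpotent $R$-modules are all satisfied by $\mathcal{C}$, and that category is the \emph{smallest} such subcategory containing $S$, every $S$-nilpotent $R$-module lies in $\mathcal{C}$. Applying this to the assumption that $R$ itself is $S$-nilpotent yields $R\in\mathcal{C}$, i.e.\ $M\simeq R\sm_R M\simeq 0$, which is the definition of $S$ being faithful as an $R$-module.

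There is no real obstacle here: the argument is purely formal, provided one knows that $(-)\sm_R M$ is exact and intertwines with spectrum-level smashing. The only point worth double-checking is axiom (2), where one must use that the spectrum-level smash product on $R$-modules is the one making $R$-modules tensored over spectra, so that $(N\sm Y)\sm_R M\simeq (N\sm_R M)\sm Y$ holds on the nose in the homotopy category.
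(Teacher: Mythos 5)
Your proof is correct and takes essentially the same approach as the paper: both define the subcategory $\mathcal{C}$ of $R$-modules $N$ with $N\sm_R M\simeq 0$, verify the closure axioms from the definition of $S$-nilpotence, and conclude by minimality that $R\in\mathcal{C}$. Your verification of axiom (2) is, if anything, slightly more careful than the paper's in distinguishing the spectrum-level smash $N\sm Y$ from $\sm_R$.
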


\begin{proof}
Let $M$ be an $R$-module such that $S\sm_{R} M \simeq 0$. Let $\Ccal$ be the category of $R$-modules $N$ such that $N\sm_{R} M\simeq 0$. $\Ccal$ is closed under retracts because if $N'$ is a retract of $N$ then $N'\sm_{R} M$ is a retract of $N\sm_{R}M$ and retracts of zero are zero. $\Ccal$ is closed under cofiber sequences because if $N_1\to N_{2} \to N_3$ is a cofiber sequence and $N_1, N_2\in \Ccal$, then the cofiber sequence $N_1\sm_{R} M \to N_2\sm_{R}M\to N_3\sm_{R} M$ shows that $N_3\sm_{R}M\in \Ccal$. Lastly, if $N\in \Ccal$ then $(N\sm_{R} N') \sm_{R} M \simeq 0$ so $N\sm_{R} N' \in \Ccal$. This implies that $\Ccal$ contains the category of $S$-nilpotent $R$-modules, so $R\in \Ccal$ and $M = R\sm_{R} M \simeq 0$.
\end{proof}

\begin{prop}
\label{EO-to-E-faithful}
$\EO$ is $E$-nilpotent and $\EhC$-nilpotent. As a consequence, the maps $\EO\to \EhC$ and $\EO\to E$ are faithful.
\end{prop}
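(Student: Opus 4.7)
The plan is to exploit the identification $\EO\sm X_p\simeq \EhC$ from \Cref{EO-sm-Xp-cx-orientable} together with the nilpotence lemma proved immediately above. First I would observe that $\alpha_1\in\pi_{2p-3}(\EO)$ is nilpotent: indeed $\alpha_1^2=0$, because $|\alpha_1|=2p-3$ is odd and $p$ is odd, so graded-commutativity forces the square to vanish. Applying the nilpotence lemma with $R=\EO$ and $f=\alpha_1$ (and iterating with further nilpotent classes in $\pi_*(\EO)$, such as $\beta_1$), one reduces $\EhC$-nilpotence of $\EO$ to $\EhC$-nilpotence of a small iterated cofiber.

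The key calculation I would then establish is $\EhC\sm_{\EO}\EhC\simeq \bigvee_p \EhC$. Using the identification $\EhC\simeq \EO\sm X_p$ we rewrite $\EhC\sm_{\EO}\EhC\simeq \EhC\sm X_p$; since $\EhC$ is complex orientable (as noted after \Cref{EO-sm-Xp-cx-orientable}, because $n^2$ is prime to $p$) and $X_p$ has even mod-$p$ homology, $\alpha_1$ acts trivially on $\EhC$, so all attaching maps of $X_p$ split off after smashing with $\EhC$, giving a wedge of $p$ shifted copies of $\EhC$. This is the ``Galois'' condition for $\EO\to\EhC$ with group $C_p$, and it allows a descent argument: the cobar/Amitsur complex $\EhC^{\sm_{\EO}\bullet+1}$ has every term in the $\EhC$-thick tensor ideal, and the associated descent spectral sequence $H^*(C_p;\pi_*\EhC)\Rightarrow \pi_*\EO$ has a horizontal vanishing line (because the HFPSS for $\EO$ does, at height $p-1$ at the prime $p$). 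Hence the totalisation converges in finitely many stages, exhibiting $\EO$ as an iterated cofiber of modules in the $\EhC$-thick ideal.

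For $E$-nilpotence, I would note separately that $\EhC\to E$ is a $C_{n^2}$-Galois extension with $|C_{n^2}|=(p-1)^2$ invertible in $\pi_0(\EhC)$. The averaging idempotent $\tfrac{1}{n^2}\sum_{g\in C_{n^2}}g$ exhibits $\EhC$ as a retract of $E$ in $\EhC$-modules, so $E$ and $\EhC$ generate the same thick tensor ideal in $\EO$-modules. Hence $\EO$ is also $E$-nilpotent. The faithfulness consequence for both maps then follows immediately from the preceding lemma.

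The main obstacle I anticipate is the finite-descent step: verifying that the cobar totalisation for $\EO\to\EhC$ actually terminates in finitely many stages in a way compatible with thick-ideal closure. This amounts to pinning down a horizontal vanishing line for the relevant descent spectral sequence; in practice I would try to avoid this by using the nilpotence lemma directly on a careful choice of nilpotent generators of the kernel of $\pi_*(\EO)\to\pi_*(\EhC)$, using the $X_p$ cell structure as a bookkeeping device rather than invoking an abstract descent theorem.
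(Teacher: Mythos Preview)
Your proposal circles around the right ingredients but misses the short, decisive step the paper takes. The paper does not use $\alpha_1$ at all, nor any descent/vanishing-line argument. Instead it proves a small lemma (\Cref{cofib-beta-X_p}): there is a cofiber sequence
\[
\susp^{2n-1}X_p \to C(\beta_1) \to X_p,
\]
which follows from the octahedral axiom applied to the factorization of $\beta_1$ through the length-$p$ Toda bracket $\toda{\alpha_1,\ldots,\alpha_1}$. Smashing with $\EO$ and using $\EO\sm X_p\simeq\EhC$ gives
\[
\susp^{2n-1}\EhC \to \EO\sm C(\beta_1) \to \EhC,
\]
so $\EO\sm C(\beta_1)$ is $\EhC$-nilpotent. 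Since $\beta_1$ is nilpotent, the nilpotence lemma immediately yields that $\EO$ is $\EhC$-nilpotent. Finally $\EhC$ is a retract of $E$ (as you note), so $\EO$ is $E$-nilpotent too.

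Your plan to start with $\alpha_1$ does not help: $C(\alpha_1)=\EO\sm X_2$, not $\EO\sm X_p$, and the higher attaching maps $\alpha^{(l)}$ of $X_p$ are not self-maps of $\EO$, so the nilpotence lemma as stated does not iterate along them. Your fallback descent argument could in principle be made to work, but it requires exactly the finite-stage convergence you flag as the main obstacle, and invoking the horizontal vanishing line of the HFPSS for $\EO$ here is heavier machinery than the one-line cofiber sequence above. The moral is that the single nilpotent class $\beta_1$, together with the explicit filtration of $C(\beta_1)$ by copies of $X_p$, already does all the work.
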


This is a special case of \cite[Theorem 3.3]{devinatz-homotopy-fixed-point-spectra}. Compare \cite[Proposition 5.4.5]{rognes-galois}. In order to prove this we need the following lemma:
\begin{lem}
There is a cofiber sequence $\susp^{2n-1}X_p \to C(\beta_1) \to X_p$.
\label{cofib-beta-X_p}
\end{lem}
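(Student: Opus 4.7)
The plan is to construct a map $f\colon \Sigma^{2n-1}X_p\to C(\beta_1)$ cell-by-cell, and then identify its cofiber with $X_p$ using the uniqueness result \Cref{X_l-unique}.

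I would build $f$ inductively. On the bottom cell $S^{2n-1}\subset\Sigma^{2n-1}X_p$ define $f$ to be $\alpha_1\colon S^{2n-1}\to S^0\hookrightarrow C(\beta_1)$. Extending across each successive cell $e^{2(k+1)n-1}$ requires the obstruction in $\pi_{2(k+1)n-2}(C(\beta_1))$ to vanish; this obstruction is a representative of the iterated Toda bracket $\toda{\alpha_1,\ldots,\alpha_1}$ of length $k+1$ (assembled from the null-homotopies chosen at previous stages). For $k+1<p$ these brackets vanish in $\pi_*(S^0)$, so extensions exist. At the final step, extending across the top cell $e^{2pn-1}$, the obstruction is a representative of the $p$-fold Toda bracket $\toda{\alpha_1,\ldots,\alpha_1}=\beta_1\in\pi_{2pn-2}(S^0)$. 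This class is killed by the inclusion $S^0\hookrightarrow C(\beta_1)$ since by definition $C(\beta_1)=S^0\cup_{\beta_1}e^{2pn-1}$, so the final extension exists, with $f$'s top-cell component mapping non-trivially onto the top cell of $C(\beta_1)$.

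Next I would compute $\HFp_*(C(f))$ via the long exact sequence of $\Sigma^{2n-1}X_p\xrightarrow{f}C(\beta_1)\to C(f)$. The map $f_*$ on $\HFp$-homology is an isomorphism on $\HFp_{2pn-1}$ (since the top cells are matched by the final step of the construction) and zero in all other degrees (for dimensional reasons, as $\HFp_*(C(\beta_1))$ is concentrated in dimensions $0$ and $2pn-1$ while $\HFp_*(\Sigma^{2n-1}X_p)$ sits in the odd degrees $2kn-1$ for $1\leq k\leq p$). The long exact sequence then gives $\HFp_*(C(f))=\Fp$ in dimensions $\{0,2n,4n,\ldots,2(p-1)n\}$ and zero elsewhere, with $\xi_1$-coaction inherited from the $\alpha_1$ attaching maps in $\Sigma^{2n-1}X_p$. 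This identifies $\HFp_*(C(f))$ with $W_p$ as a Steenrod comodule, so by \Cref{X_l-unique} we have $C(f)\simeq X_p$, establishing the desired cofiber sequence.

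The main obstacle is making the inductive construction of $f$ completely precise: at each stage the extension requires a choice of null-homotopy, and the obstructions are a priori only defined up to Toda bracket indeterminacy, so one must arrange compatible choices so that the final obstruction class is exactly $\beta_1$ and not some other representative of the bracket. The point is that the spectrum $X_p$ is itself the universal witness to such a tower of $\alpha_1$ null-homotopies, so compatible choices for $f$ can be read off directly from the cellular structure of $X_p$; any remaining indeterminacy is absorbed by the fact that, by \Cref{X_l-unique}, the cofiber is determined by its $\HFp$-homology and Steenrod coaction.
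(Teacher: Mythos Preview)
Your approach is workable but takes a more laborious route than the paper. The paper avoids the cell-by-cell construction and its indeterminacy bookkeeping by applying the octahedral axiom directly to a factorization of $\beta_1$. First it identifies the fiber of the bottom-cell inclusion $S^0 \to X_p$ as $\Sigma^{2n-1}X_{p-1}$ (via a homology computation and \Cref{X_l-unique}, just as you do for your cofiber). This gives a canonical map $\bar{\alpha}_1\colon \Sigma^{2n-1}X_{p-1} \to S^0$ whose cofiber is $X_p$ by construction. Precomposing with the suspended attaching map $\Sigma^{2n-1}\tilde{\alpha}_1\colon S^{2pn-2} \to \Sigma^{2n-1}X_{p-1}$ for the top cell of $X_p$ yields the Toda bracket $\langle\alpha_1,\ldots,\alpha_1\rangle=\beta_1$ on the nose. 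Octahedral applied to this factorization then produces the cofiber sequence immediately: the cofiber of $\Sigma^{2n-1}\tilde{\alpha}_1$ is $\Sigma^{2n-1}X_p$, the cofiber of $\bar{\alpha}_1$ is $X_p$, and the cofiber of the composite is $C(\beta_1)$.

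Your construction is really building the same map $\bar{\alpha}_1$ by obstruction theory on the subcomplex $\Sigma^{2n-1}X_{p-1}\subset\Sigma^{2n-1}X_p$, and your remark that ``compatible choices can be read off from the cellular structure of $X_p$'' is precisely the observation that one should take the fiber map rather than make arbitrary choices of null-homotopies. But once that identification is made, the octahedral axiom finishes the argument without any need to compute the homology of the cofiber, to verify that $f$ hits the top cell of $C(\beta_1)$, or to appeal a second time to \Cref{X_l-unique}. The paper's route buys a one-line proof that sidesteps the indeterminacy issues you correctly flagged; your route would work but requires tying up exactly the loose ends you list in your final paragraph.
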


\begin{proof}
\def\alphatwee{\widetilde{\alpha}_1}
\def\alphabar{\overline{\alpha}_1}
Let $F$ be the fiber of the inclusion of the bottom cell $S^0 \to X_p$. There is a homology isomorphism $\HFp_*F\cong \HFp_*\susp^{2n-1}X_{p-1}$ so by \Cref{X_l-unique} we deduce that $F\simeq \susp^{2n}X_{p-1}$.

Let $\alphatwee\colon S^{2n^2-1}\to X_{p-1}$ be the attaching map for $X_p$ and let $\alphabar\colon \susp^{2n-1}X_{p-1}\to S^0$ be the fiber of the the inclusion of the bottom cell $S^0\to X_p$. The composition $\alphabar\circ (\susp^{2n-1}\alphatwee)$ is the Toda bracket $\toda{\alpha_1,\ldots,\alpha_1} = \beta$. The octahedral axiom gives us the following diagram, where the straight lines are all cofiber sequences:
\[\begin{tikzcd}[column sep = small, row sep = small]
&&&X_p\\[-3pt]
S^{2pn - 2}\drar["\susp^{2n-1}\alphatwee"'] \ar[rr,"\beta"] && S^{0} \urar  \rar & C(\beta_1)\uar \\[-2pt]
  & \susp^{2n-1}X_{p-1}\urar["\alphabar"'] \ar[drr] &&\\[12pt]
  &&& \susp^{2n-1}X_p\ar[uu]
\end{tikzcd}\]
\end{proof}

\begin{proof}[Proof of \Cref{EO-to-E-faithful}]
$\EhC$ is a retract of $E$ so $\EhC$ is $E$-nilpotent. \Cref{cofib-beta-X_p} says there is a cofiber sequence:
\[\susp^{2n-1}X_p \to C(\beta_1) \to X_p.\]
Smashing this with $\EO$ gives a cofiber sequence
\[\susp^{2n-1}\EhC \to \EO\sm C(\beta_1)\to \EhC\]
so that $\EO\sm C(\beta_1)$ is $E$-nilpotent. Since $\beta_1$ is nilpotent, $\EO$ is $E$-nilpotent too.
\end{proof}

\begin{thm}
\label{EO-EhC-Galois}
The map $\EO\to \EhC_{n}$ is a faithful $C_p$-Galois extension.
\end{thm}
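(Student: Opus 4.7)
The plan is to verify the three conditions of Rognes's \cite[Definition 4.1.3]{rognes-galois} of an $E$-local $C_p$-Galois extension. Faithfulness is already given by \Cref{EO-to-E-faithful}, and the key strategy is to exploit the intermediate extension $\EhC \to E$, which is a $C_{n^2}$-Galois extension. Since $|C_{n^2}| = (p-1)^2$ is prime to $p$, this tame extension can be handled directly, while the overall $G$-Galois structure on $\EO \to E$ is cited from Devinatz \cite{devinatz-homotopy-fixed-point-spectra}.

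First I would verify that the Galois map $h \colon \EhC \sm_{\EO} \EhC \to F((C_p)_+, \EhC)$ is an equivalence by descent along the faithful extension $\EhC \to E$. Smashing with $E$ over $\EhC$ reduces the computation of $\EhC \sm_{\EO} \EhC$ to $\EhC \sm_{\EO} E \simeq (E \sm_{\EO} E)^{hC_{n^2}}$. The $G$-Galois property of $\EO \to E$ gives $E \sm_{\EO} E \simeq \bigvee_G E$, and as a left $C_{n^2}$-set $G$ is free with exactly $p$ orbits (indexed by $C_{n^2} \backslash G$, which has $p$ elements). Therefore $(\bigvee_G E)^{hC_{n^2}} \simeq \bigvee_p E$, matching $F((C_p)_+, \EhC) \sm_{\EhC} E \simeq \bigvee_p E$. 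Faithfulness of $\EhC \to E$ then lifts this to the statement that $h$ itself is an equivalence. Alternatively, one can give a direct computation using $\EhC \simeq \EO \sm X_p$ from \Cref{EO-sm-Xp-cx-orientable} to write $\EhC \sm_{\EO} \EhC \simeq \EhC \sm X_p$ and exploit the freeness $\E_*(X_p) \cong E_*[C_p]$ from \Cref{EXp-free}.

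The $C_p$-action on $\EhC$ is then extracted from the Galois isomorphism in the standard way: the $p$ projections $F((C_p)_+, \EhC) \to \EhC$ composed with the right unit $\EhC \to \EhC \sm_{\EO} \EhC$ yield $p$ $\EO$-algebra self-maps of $\EhC$ that form a $C_p$-action by associativity and unitality of the smash product. Finally, the condition $\EO \simeq (\EhC)^{hC_p}$ is verified via the Amitsur cobar complex $\EhC \rightrightarrows \EhC \sm_{\EO} \EhC \to \cdots$, which by the Galois isomorphism becomes the standard cobar complex computing $(\EhC)^{hC_p}$; faithful descent (from \Cref{EO-to-E-faithful}) ensures that its totalization recovers $\EO$. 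The main obstacle will be carefully tracking how the $G$-Galois structure on $\EO \to E$ restricts along the non-normal subgroup $C_{n^2}$, and confirming that the extracted $C_p$-action is the one compatible with the existing algebraic identifications.
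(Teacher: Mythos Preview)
Your primary route — citing Devinatz's $G$-Galois theorem for $\EO \to E$ and then descending through $C_{n^2}$-fixed points — runs opposite to the paper's and has a real gap. The Galois map $h$ is \emph{defined} in terms of the $C_p$-action (it is adjoint to the action-then-multiply map), so proposing to verify that $h$ is an equivalence first and extract the action afterward is circular. Your descent computation also only shows that the source and target of $h \sm_{\EhC} E$ are abstractly $\bigvee_p E$; it does not identify the specific map. You correctly flag that $C_{n^2}$ is not normal in $G$, and this is exactly why there is no off-the-shelf ``intermediate Galois'' statement to quote: $G/C_{n^2}$ is a $G$-set, not a group. Finally, the stated aim of this section is to give a proof of the $C_p$-Galois property that does \emph{not} rest on the full $G$-Galois result (the latter is recorded separately as \Cref{EO-to-E-Galois}, which the paper labels ``Not A Corollary''), so your primary route undercuts that purpose even if it could be repaired.

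Your alternative — use $\EhC \simeq \EO \sm X_p$ to rewrite $\EhC \sm_{\EO} \EhC \simeq \EhC \sm X_p$ and invoke the $E_*[C_p]$-freeness of $E_*(X_p)$ — is exactly the paper's approach, and you should develop that instead. The paper takes the $C_p$-action and the identification $(\EhC)^{hC_p}\simeq E^{hG}=\EO$ as coming from the ambient $G$-action, and concentrates on condition (3). By Nakayama and a rank count it suffices to show $h$ is surjective modulo the maximal ideal $\n$ of $\EhC_*$. Writing $L_* = \EhC_*/\n$, the left unit $L_* \to \pi_*(\EhC\sm_{\EO}\EhC)/\n \cong L_*[C_p]$ is $C_p$-equivariant from the trivial representation, so its image is a unit multiple of the trace element $\tau = \sum_{g\in C_p} g$; since the composite with the multiplication map $m$ is the identity, $m(\tau)$ is a unit. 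A short linear-algebra lemma then shows that $m(\tau)\neq 0$ is precisely the condition for the adjoint $L_*[C_p]\to \prod_{C_p} L_*$ to be an isomorphism. This pins down the map $h$ itself, not merely an abstract equivalence, and does so without invoking Devinatz.
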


To prove this, we need the following lemma:
\begin{lem}
Let $k$ be a field of characteristic $p$, let $\tau\in k[C_p]$ be the trace element $\sum_{g\in C_p} g$, and let $f$ be a vector space map $k[C_p]\to k$. Then the map $k[C_p]\to \prod_{C_p} k$ adjoint to the map $C_p\times k[C_p]\to k$ given by $(g,v)\mapsto f(gv)$ is an isomorphism if and only if $f(\tau)\neq 0$.
\end{lem}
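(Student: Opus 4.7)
The plan is to reduce to an injectivity check and exploit the fact that $k[C_p]$ is a local Artinian ring with one-dimensional socle.

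First observe that both $k[C_p]$ and $\prod_{C_p} k$ are $k$-vector spaces of dimension $p$, so the adjoint map $\Phi_f\colon k[C_p]\to \prod_{C_p}k$, $v\mapsto (f(gv))_{g\in C_p}$, is an isomorphism if and only if it is injective. The ``only if'' direction is immediate: since $g\tau=\tau$ for every $g\in C_p$, the element $\tau$ lies in $\ker\Phi_f$ whenever $f(\tau)=0$, in which case $\Phi_f$ is not injective.

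For the ``if'' direction, suppose $f(\tau)\neq 0$ and let $v\in\ker\Phi_f$. Then $f(gv)=0$ for all $g\in C_p$, and taking $k$-linear combinations shows that $f$ vanishes on the entire cyclic $k[C_p]$-submodule $k[C_p]\cdot v$ generated by $v$. The key structural input is the isomorphism $k[C_p]\cong k[s]/(s^p)$ sending $\zeta\mapsto s+1$, under which $k[C_p]$ becomes a local Artinian ring with maximal ideal $(s)$. Its socle (the annihilator of the maximal ideal) is the one-dimensional ideal $(s^{p-1})$, and a direct computation with binomial coefficients modulo $p$ identifies $\tau=\sum_{k=0}^{p-1}(s+1)^k$ with $s^{p-1}$; alternatively, $\tau$ is annihilated by $\zeta-1$ and is nonzero, so it must span the socle. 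Hence every nonzero ideal of $k[C_p]$ contains $\tau$.

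If $v\neq 0$, then $k[C_p]\cdot v$ is a nonzero ideal, so $\tau=hv$ for some $h\in k[C_p]$, giving $f(\tau)=f(hv)=0$, a contradiction. Therefore $v=0$ and $\Phi_f$ is injective, hence an isomorphism.

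The main (very small) obstacle is the identification of $\tau$ as a generator of the socle of $k[C_p]$; everything else is formal. I would probably present this via the annihilator argument ($(\zeta-1)\tau=0$ and $\tau\neq 0$, together with the fact that a local Artinian ring whose maximal ideal squares to something nilpotent of length $p$ has a one-dimensional socle) rather than the explicit binomial computation, since it is cleaner and more conceptual.
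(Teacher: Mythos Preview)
Your proof is correct. Both your argument and the paper's rest on the same structural fact---that $k[C_p]\cong k[s]/(s^p)$ is local Artinian with one-dimensional socle spanned by $\tau$---but you apply it in dual ways. The paper reformulates the isomorphism condition as the statement that $f$ generates $(k[C_p])^\vee$ as a $C_p$-representation, then argues that $f(\tau)\neq 0$ forces $f\notin(\zeta-1)(k[C_p])^\vee$, the unique maximal proper subrepresentation of the dual. You instead check injectivity of $\Phi_f$ directly: any nonzero $v\in\ker\Phi_f$ generates a nonzero ideal of $k[C_p]$, which must contain the socle $\tau$, contradicting $f(\tau)\neq 0$. Your route is slightly more elementary in that it avoids passing to the dual representation, while the paper's phrasing makes the representation-theoretic content (that $f$ is a cyclic generator of $(k[C_p])^\vee$) more explicit; the two are mirror images of each other.
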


\begin{proof}
If $V$ is a $d$-dimensional $k$-vector space, a collection of $d$ maps $f_i\colon V\to k$ have product an isomorphism $V\to \prod k$ if and only if the $f_i$ generate $V^*$, so it suffices to check that $f$ generates $(k[C_p])^{\vee}$ as a $C_p$-representation. Let $\zeta\in C_p$ be a generator. Because $f(\tau)\neq 0$ and $(\zeta-1)\tau=0$ we deduce that $f$ is not in $(\zeta-1)k[C_p]^{\vee}$. However, $(\zeta-1)(k[C_p])^{\vee}$ is the unique maximal subrepresentation of $(k[C_p])^{\vee}$, so $f$ lies in no proper subrepresentation of $(k[C_p])^{\vee}$ and $f$ generates $(k[C_p])^{\vee}$ as a representation.
\end{proof}

\begin{proof}[Proof of \Cref{EO-EhC-Galois}]
Let $R=\EhC$, let $\n$ be the maximal ideal of $R_*$ and let $L_* = R_*/\n$.
We defined $\EO\to E$ as the inclusion of the $G$ fixed points, so $C_p$ acts on $R$ by $\EO$-algebra maps and
$i\colon \EO\to R^{hC_p}\simeq \EhG$ is an equivalence. So conditions (1) and (2) are satisfied. The map $\EO\to R$ is faithful by \Cref{EO-to-E-faithful}.

It remains to check condition (3). It suffices to show that $h$ is an isomorphism after taking homotopy. By Nakayama's lemma we can check that $h$ is a surjection by checking that it is a surjection after quotienting by the maximal ideal of $R$. Since $\pi_*(R\sm_{\EO} R)$ and $\pi_*\left(\prod_{C_p} R\right)$ are free $R_*$-modules of the same dimension, it will follow that $h$ is an equivalence.

The map $h\colon R\sm_{\EO} R \to \prod_{C_p} R$ has $g$ component given by the composite
\[\begin{tikzcd}
R \sm_{\EO} R \rar["g\sm id"]& R\sm_{\EO} R \rar["m"] & R
\end{tikzcd}\]
so we need to show that the sum of the $g$-conjugates of $m\colon L_*\otimes_{\eta_{L}}\pi_*(R\sm_{\EO}R)\to  L_*$ is an isomorphism.
Since $R\simeq \EO\sm X_p$, we have an equivalence of left $R$-modules $R\sm_{\EO} R \simeq R \sm X_p$. If we let $C_p$ act trivially on $X_p$, this isomorphism is $C_p$ equivariant. Consider the following diagram:
\[\begin{tikzcd}
R \rar["\eta_L"']\ar[rr,bend left=20,"id"]\drar & R\sm_{\EO} R\dar \rar["m"'] & R\\
                                                     & R\sm X_p \urar
\end{tikzcd}\]
All maps are $R$-module maps where $R$ acts on $R\sm_{\EO} R$ on the left. The map $\eta_L$ is $C_p$-equivariant, but $m$ is not equivariant for the action of $C_p$ on the left factor. Now taking homotopy and quotienting by $\n$ gives:
\[\begin{tikzcd}
L_* \rar["e"'] \ar[rr,bend left=20,"id"]& L_*[C_p] \rar["m"'] & L_*
\end{tikzcd}\]
where all maps are of $L_*$-modules and $e$ is $C_p$ equivariant. Let $\tau = \sum_{g\in C_p} g$ be the trace element. Since $e$ is an equivariant map from the trivial representation, it must be some nonzero multiple of the map $1\mapsto \tau$. We deduce that $m(\tau)$ is a unit.  By the lemma, we are done.
\end{proof}

We wish the following were a corollary:
\begin{notcor}[Devinatz \cite{devinatz-homotopy-fixed-point-spectra}]
\label{EO-to-E-Galois}
The map $EO\to E$ is a faithful Galois extension.
\end{notcor}

\begin{prop}
\label{HFPSS}
For any $\EO$-module $M$ there is a spectral sequence
\[ \HFPSS(M)\colon H^*_G(\EEO_*(M))\Rightarrow \pi_*(M)\]
and for any connective spectrum $X$ there is a map $\ANSS(X)\to \HFPSS(\EO\sm X)$.
\end{prop}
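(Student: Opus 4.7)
The plan is to realize this as the Bousfield-style $E$-based Adams spectral sequence in the category of $\EO$-modules. Apply the Amitsur construction to the unit $\EO\to E$ smashed over $\EO$ with $M$ to obtain a cosimplicial $\EO$-module
\[
M \to E\smEO M \to E\smEO E \smEO M \to \cdots;
\]
its $\mathrm{Tot}$ tower produces a conditionally convergent spectral sequence with $E_1^{s,t}=\pi_t(E^{\smEO(s+1)}\smEO M)$. Strong convergence with abutment $\pi_*(M)$ follows from the $E$-nilpotence of $\EO$ already established in \Cref{EO-to-E-faithful}: by the standard argument (as in \cite{bousfield-localization}) $E$-nilpotence of the unit bounds the length of an $E$-nilpotent resolution, which uniformly truncates the Amitsur tower in each homotopical degree and identifies the target as $\pi_*(M)$.

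To identify the $E_2$ page I would invoke the Galois property of $\EO\to E$ recorded in \Cref{EO-to-E-Galois} and cited from Devinatz. Being $G$-Galois gives $E\smEO E\simeq F(G_+,E)$, and iterating yields $E^{\smEO(s+1)}\simeq F(G^s_+,E)$ as $\EO$-modules. Consequently
\[
\pi_*\bigl(E^{\smEO(s+1)}\smEO M\bigr)\simeq \pi_*\bigl(F(G^s_+,E\smEO M)\bigr)\simeq \mathrm{Map}\bigl(G^s,\EEO_*(M)\bigr),
\]
which is exactly the $s$th term of the cobar complex computing $H^*_G(\EEO_*(M))$; passing to $E_2$ gives $E_2^{s,t}=H^s_G(\EEO_t(M))$, as required.

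For the comparison map, note that $\ANSS(X)$ is constructed from the cosimplicial object $\BP^{\sm(\bullet+1)}\sm X$. The complex orientation $\BP\to E$ together with the natural maps from $\sm$ to $\smEO$ (smashing over $\EO$ being a quotient of smashing over $S$) assemble into a map of cosimplicial $\EO$-modules
\[
\BP^{\sm(\bullet+1)}\sm X \longrightarrow E^{\smEO(\bullet+1)}\sm X \;=\; E^{\smEO(\bullet+1)}\smEO(\EO\sm X),
\]
which on $\mathrm{Tot}$ towers induces the desired map $\ANSS(X)\to\HFPSS(\EO\sm X)$. The one genuinely technical input is strong convergence, handled by the previous section's $E$-nilpotence result; once that is in hand, the $E_2$ identification and the construction of the comparison map are formal consequences of the Galois property of $\EO\to E$ and the naturality of the Amitsur construction.
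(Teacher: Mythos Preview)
Your argument is correct and follows essentially the same route as the paper's: both set up the relative $E$-based Adams spectral sequence via the Amitsur cosimplicial object, use the $E$-nilpotence of $\EO$ from \Cref{EO-to-E-faithful} for strong convergence to $\pi_*(M)$, invoke the Galois property \Cref{EO-to-E-Galois} to identify the $E_2$ page with $H^*_G(\EEO_*(M))$, and construct the comparison map by mapping the $\BP$-Amitsur object to the $E$-Amitsur object over $\EO$. The only cosmetic difference is that the paper routes the $E_2$ identification through the Baker--Lazarev $\Ext$ formulation (using flatness of $E_*\to\EEO_*E$) and then cites Galois to rewrite $\Ext$ as group cohomology, whereas you go directly from the Galois equivalence $E^{\smEO(s+1)}\simeq F(G^s_+,E)$ to the cobar complex; since $G$ is finite these are equivalent presentations of the same argument.
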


The convergence is \cite[Theorem 3.3]{devinatz-homotopy-fixed-point-spectra} and the identification of the $E_2$ term is \cite[Theorem 3.1]{devinatz-homotopy-fixed-point-spectra}. The map of spectral sequences is explained in section 11.3.3 on page 109 of \cite{kervaire}.

\begin{proof}
The left unit map $E_*\to \EEO_*E$ is flat, so given an $\EO$-module $M$ there is an $E$-based Adams spectral sequence \cite[Theorem 2.1]{baker-lazarev}
\[\Ext_{E_*^{\EO}E}(\E_*,\EEO_*(M))\Rightarrow \pi_*\left(\widehat{L}_{E}^{\EO} M\right).\]
A map of $\EO$-modules $M\to N$ is an $E$-equivalence if $E\sm_{\EO}M \to E\sm_{\EO} N$ is an equivalence. By \Cref{EO-to-E-faithful}, this is true if and only if $M\to N$ is itself an equivalence, so for any $\EO$-module, $L^{\EO}_E M\simeq M$.
\Cref{EO-to-E-faithful} implies that $\EO$ is $E$-nilpotent and the canonical maps $I\!d\to L_{E}^{\EO}\to \widehat{L}_{E}^{\EO}$ are equivalences.
By \Cref{EO-to-E-Galois} the $\Ext$ group that determines the $E_2$ page of the spectral sequence is group cohomology, so we can rewrite the $E_2$ page as:
\[H^*_G(E_*^{\EO}(M))\Rightarrow \pi_*(M).\]

The map $\BP\to E$ induces a map from the Adams Novikov spectral sequence to the $E$-based Adams spectral sequence. The $E$-based Adams spectral sequence for a spectrum $X$ corresponds to a cosimplicial object with $i$th term $E^{\sm (i+1)} \sm X$ where the face maps are unit maps and the degeneracy maps are multiplication. The map $X\to \EO\sm X$ induces a map of cosimplicial objects $E^{\sm (i+1)}\sm X\to E^{\sm_{\EO}(i+1)} \sm_{\EO} (\EO\sm X)$ where $E^{\sm_{\EO}(i+1)} \sm_{\EO} (\EO\sm X)$ corresponds to the $\EO$-based Adams spectral sequence for $\EO\sm X$. Thus, there is a corresponding map of spectral sequences $\ANSS(X)\to \HFPSS(\EO\sm X)$.
\end{proof}
There is a particularly convenient description of a minimal Adams resolution:
\begin{prop}
Any $\EO$-module $M$ has an $E$-based Adams resolution:
\[\begin{tikzcd}
M \rar & M\sm X_p \rar& \susp^{\deg{\alpha}}M\sm X_p \rar & \susp^{\deg{\beta}}M\sm X_p \rar &\susp^{\deg{\alpha}+\deg{\beta}}M\sm X_p \rar &\cdots
\end{tikzcd}\]
\end{prop}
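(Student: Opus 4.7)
The plan is to construct the tower by iterating two fundamental cofiber sequences and then verify the Adams condition using \Cref{EXp-free}. The two building blocks are (a) the cofiber sequence $\susp^{2n-1}X_{p-1}\to S^0\to X_p$, obtained by identifying the fiber of the bottom-cell inclusion via the argument in the proof of \Cref{cofib-beta-X_p}, and (b) the cofiber sequence $\susp^{2(p-1)n-1}S^0\to X_{p-1}\to X_p$, arising from the subcomplex inclusion $X_{p-1}\hookrightarrow X_p$ whose cofiber is the top cell $S^{2(p-1)n}$. Observe that $2n-1=\deg\alpha$ and $(2n-1)+(2(p-1)n-1)=2pn-2=\deg\beta$, so the two suspensions produced by (a) and (b) will match the shifts appearing in the displayed sequence.

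Set $W_0=M$, and define inductively $W_{2k}=\susp^{k\deg\beta}M$ and $W_{2k+1}=\susp^{k\deg\beta+\deg\alpha}M\sm X_{p-1}$. At even stages, smash (a) with $W_{2k}$ to produce the cofiber sequence $W_{2k+1}\to W_{2k}\to I_{2k}:=\susp^{k\deg\beta}M\sm X_p$; at odd stages, smash (b) with $\susp^{k\deg\beta+\deg\alpha}M$ to produce $W_{2k+2}\to W_{2k+1}\to I_{2k+1}:=\susp^{k\deg\beta+\deg\alpha}M\sm X_p$. By construction the tower $\{W_s\}$ with cofibers $\{I_s\}$ realizes exactly the sequence displayed in the statement, with the initial map $M\to M\sm X_p$ being the unit smashed with the bottom-cell inclusion.

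To verify the Adams condition, we must check that each cofiber sequence $W_{s+1}\to W_s\to I_s$ splits after applying $E\sm_{\EO}{-}$. By \Cref{EXp-free}, $E_*(X_p)$ is free over $E_*[C_p]$ of rank $p$, so $E\sm X_p\simeq\bigvee_p E$ as $E$-modules and the bottom-cell inclusion $S^0\to X_p$ realizes a wedge-summand inclusion. Analogously $E_*(X_{p-1})$ is $E_*$-free of rank $p-1$, so $E\sm X_{p-1}\simeq\bigvee_{p-1} E$, and the skeletal map $X_{p-1}\to X_p$ becomes an inclusion of $p-1$ summands into $\bigvee_p E$. Smashing with $M$ over $\EO$ preserves these splittings, so both building-block cofiber sequences become split after $E\sm_{\EO}{-}$ and each $E\sm_{\EO}I_s$ is a free $E\sm_{\EO}M$-module. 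The main subtle point is the appeal to \Cref{EXp-free} to reduce splitting of the tower to the algebraic statement that $E_*(X_p)$ is a free $E_*[C_p]$-module; the rest is bookkeeping with suspensions.
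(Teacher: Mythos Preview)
The paper states this proposition without proof, so there is no argument in the paper to compare against; your construction by alternating the two cofiber sequences is the right idea, and the suspension bookkeeping checks out.

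Two points to sharpen. First, a small slip: \Cref{EXp-free} says $E_*(X_p)$ is free over $E_*[C_p]$ of rank \emph{one}, hence rank $p$ over $E_*$. For the splitting of the triangles after $E\sm_{\EO}{-}$ you only need freeness over $E_*$, and that already follows from $X_p$ and $X_{p-1}$ being even and torsion-free; the $C_p$-freeness is not what makes those sequences split. Second, and more substantively: for the tower to be an $E$-based Adams resolution in the sense that feeds into \Cref{HFPSS}, you must also verify that each $I_s=M\sm X_p$ is $E$-injective in $\EOmod$, i.e.\ a retract of an extended module $E\sm_{\EO}N$. You have not done this, and it is precisely where $C_p$-freeness actually enters: by \Cref{EO-sm-Xp-cx-orientable} one has $\EO\sm X_p\simeq\EhC$, and since $n^2$ is prime to $p$ this is a retract of $E$ as an $\EO$-module, so $M\sm X_p\simeq M\sm_{\EO}\EhC$ is a retract of $E\sm_{\EO}M$. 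You invoked the right corollary but applied it to the wrong half of the Adams condition; once you move it over, the argument is complete.
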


\section{Splittings}
Recall that $n=p-1$.
\label{sec-splittings}
\begin{restatethis}{defn}{defn-cellular-EO-module}
A \emph{cellular} $\EO$-module is an $\EO$-module $M$ equipped with an \emph{Atiyah-Hirzebruch filtration} $M_0 \to M_1\to \cdots \to M$ with $M=\hocolim M_i$, such that $M_0 = \bigwsum_{j\in S_0}\susp^{s_j} \EO$ and there are cofiber sequences $\bigwsum_{j\in S_i} \susp^{s_j} \EO\to M_i \to M_{i+1}$. A cellular $\EO$-module is \emph{$k$-sparse} for $k$ a divisor of $2p^2n^2$ if all of the suspensions $s_j$ used in the filtration have the same congruence class mod $k$. A connective spectrum is \emph{$k$-sparse} for $k$ an integer if it has a cell structure with only cells in a particular congruence class mod $k$.
\end{restatethis}
If $Z$ is a connective spectrum then $\EO\sm Z$ is cellular. A cellular $\EO$-module has an Atiyah-Hirzebruch spectral sequence. If a spectrum $Z$ is $k$-sparse for $k$ a divisor of $2p^2n^2$, then $\EO\sm Z$ is $k$-sparse. To show that a connective spectrum is $k$-sparse it suffices to check that $\HFp_*(Z)$ is concentrated in a single congruence class mod $k$.

Given an $\EO$-module $M$, we get an associated $K_*[C_p]$-module $\EEO_*(M)/\mfrak$, which has a decomposition into a sum of indecomposable $K_*[C_p]$-modules. We call the $\EO$-module ``algebraic'' if this splitting lifts to a splitting of $M$ into the standard $\EO$-modules $\EO\sm X_l$.
\begin{restatethis}{defn}{defn-algebraic-EO-theory}
An $\EO$-module $M$ is \emph{algebraic} if $M\simeq \EO\sm \bigwsum \susp^{s_i} X_{l_i}$. A spectrum $Z$ \emph{has algebraic $\EO$ theory} if $\EO\sm Z$ is algebraic.
\end{restatethis}
An algebraic $\EO$-module is evidently cellular. A cellular $\EO$-module $M$ is algebraic if and only if all differentials in the Atiyah-Hirzebruch spectral sequence for $M$ vanish except for the $d_{2n}$ differential.

When $p=3$, our definition of an algebraic $\EO$-module is closely related to Meier's definition of a ``standard vector bundle'' \cite[Definition 3.9]{relatively-free}. In Meier's nomenclature a standard vector bundle is an $E_*[G]$-module that is isomorphic to $\EEO_*(M)$ for some algebraic $\EO$-module $M$.

In \Cref{subsec:homotopy-type-of-algebraic-EO-mod}, we prove \Cref{splitting-from-E-theory} that if $M$ is an algebraic $\EO$-module then $\EEO_*(M)/\mfrak$ determines $M$ up to lost information about shifts.
We show in \Cref{splitting-from-HFp} that if $Z$ is a spectrum with algebraic $\EO$ theory, the $P^1$ action on $\HFp_*(Z)$ determines the homotopy type of $\EO\sm Z$.
We also show in \Cref{algebraic-EOmod-closed-under-union} that a ``union'' of algebraic $\EO$-modules is algebraic.
In \Cref{subsec:checking-an-EO-module-is-algebraic} we produce conditions to check that $\EO$-modules are algebraic.
We show in \Cref{2n-sparse-algebraic-EO-theory} that a $2n$-sparse spectrum has algebraic $\EO$ theory and we show in \Cref{2p-sparse-algebraic-EO-theory} that a $2p$-sparse $\EO$-module is algebraic.  In \Cref{subsec:orientations} we prove the results quoted in the introduction. None of the material after \Cref{subsec:orientations} is necessary to prove the main results quoted in the introduction.

In \Cref{subsec:frees-split} we show that if $M$ is an $\EO$-module such that $\EEO_*(M)$ is a projective $E_*$-module and $\EEO_*(M)/\mfrak$ has a free $\KCp$ submodule then there is a splitting $M\simeq \EO\sm X_p \vee N$.
In \Cref{subsec:smash-product-of-algebraic-EO-modules} we prove a formula for the smash product of algebraic $\EO$-modules and show that algebraic $\EO$-modules are closed under smash product.

\subsection{Determining the homotopy type of an algebraic \texorpdfstring{$\EO$}{EO}-module}
\label{subsec:homotopy-type-of-algebraic-EO-mod}
In this section we'll show that if $M$ is an algebraic $\EO$-module, the splitting of $M$ can be deduced from the $G$-module decomposition of $E^{\EO}_*(M)/\mfrak$, up to some lost information about shifts. We then show that if $Z$ is a spectrum with algebraic $\EO$ theory, the splitting of $\EO\sm Z$ can be deduced from the $P(1)_*$-comodule structure of $\HFp_*(Z)$.

If $M$ is an algebraic $\EO$-module then in particular it is torsion free so $E_2\!\AHSS(M)$ is a free $\EO_*$-module.

\begin{lem}
Let $g\in G$ be an element of order $n^2$. Let $v\in K_{*}(S^{2k+\epsilon})$ be a generator for $\epsilon\in\{0,1\}$. There is a primitive $n^2$ root of unity $\omega$ independent of $k$ and $\epsilon$ such that $g_*(v)=\omega^k v$.
\end{lem}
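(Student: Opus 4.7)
The plan is to reduce the statement to the computation of the $g$-action on the periodicity generator $u \in K_2$ and then transfer it to arbitrary spheres using periodicity of $K$.

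First I would pin down an explicit model for $g$. Since $n = p-1$ we have $n^2 \mid p^n-1$, so $\Fpn^\times$ contains a primitive $n^2$-th root of unity $\omega$; its Teichmüller lift $\tilde\omega \in \W(\Fpn)^\times \subset \G$ is an element of order $n^2$. Up to conjugacy in $\G$, this is the generator of the $C_{n^2}$ factor of $G = C_p \rtimes C_{n^2}$, so I may take $g = \tilde\omega$, corresponding to the scalar automorphism $s \mapsto \omega s$ of the chosen coordinate on $\Gamma_n$.

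Next I would compute $g_*(u) = \omega u$ in $K_2$. Because the periodicity element $u \in E_2$ is pinned down by the complex orientation of $E$ and its relation to the formal group coordinate $s$, the scalar action $s \mapsto \omega s$ translates directly into $g_*(u) = \omega u$ on $E_2$ (up to an overall sign convention, which is absorbed into the choice of $\omega$). Reducing modulo $\mfrak$ preserves this, giving the claimed action on the periodicity element of $K_*$.

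Finally I would propagate the answer to $K_{2k+\epsilon}(S^{2k+\epsilon})$. Multiplication by $u^k$ gives an equivalence $K \sm S^{2k+\epsilon} \simeq K \sm S^{\epsilon}$ which is \emph{not} $g$-equivariant: an explicit comparison of $g \circ (\mu \circ (u^k \sm 1))$ with $(\mu \circ (u^k \sm 1)) \circ (1 \sm g)$ shows that it intertwines the source $g$-action with $\omega^k$ times the target $g$-action. Under this equivalence $v$ corresponds to a generator $v_\epsilon$ of $K_\epsilon(S^\epsilon)$, which is fixed by $g$ because it is a suspension of the unit $1 \in K_0$ and $g$ is a ring map. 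Consequently $g_*(v) = \omega^k v$; the formula is independent of the choice of generator since any two generators of $K_{2k+\epsilon}(S^{2k+\epsilon})$ in the same degree differ by a $K_0^\times$-scalar, and $g$ fixes $K_0 = \Fpn$. The only real subtlety is bookkeeping the $\omega^k$-twist introduced by the periodicity equivalence, since without it one would conclude that any generator is $g$-fixed; the twist is exactly what produces the $\omega^k$ factor.
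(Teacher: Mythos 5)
Your proof takes a genuinely different route from the paper's. The paper stays inside the $\BPP_*\BPP$-comodule formalism it set up in \Cref{stabilizer-action-coaction}: the generator $v$ of $\BPP_*(S^{2k+\epsilon})$ has coaction $\Psi(v) = t_0^k \otimes v$, and evaluating $t_0$ at $g$ gives the leading power series coefficient of $g$, which the paper simply asserts is a primitive $n^2$-th root of unity for an element of order $n^2$. Your version instead moves $g$ to a Teichm\"uller representative by conjugacy and then tracks the $u^k$-twist in the Bott equivalence by hand. Both arrive at the same place, and your treatment of the periodicity twist (noticing the Bott map is \emph{not} $g$-equivariant while the suspension of the unit \emph{is} $g$-fixed) is the right bookkeeping; this is exactly what makes the $\omega^k$ appear.

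The one place where your argument is thinner than the paper's is the conjugacy step. You assert that $\tilde\omega$ is, up to conjugacy in $\G$, ``the generator of the $C_{n^2}$ factor of $G$.'' Two comments. First, the statement as written has the quantifier slightly off: you chose $\omega$ first and then claim $g$ is conjugate to $\tilde\omega$, but the Teichm\"uller elements of order $n^2$ need not form a single $\G$-conjugacy class (conjugation by the normalizer of the Teichm\"uller torus only moves $\omega$ through its Frobenius orbit, which is usually a proper subset of the primitive $n^2$-th roots); you should let $g$ determine which primitive root $\omega$ you use. Second, even with the quantifier fixed, the claim that every element of $\G$ of order prime to $p$ is conjugate into the Teichm\"uller torus does require an argument (e.g.\ profinite Schur--Zassenhaus applied to $1 \to 1 + \mathfrak m_D \to \mathcal O_D^\times \to \Fpn^\times \to 1$). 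In fact you can sidestep the conjugacy entirely and get a cleaner argument: the reduction map $\G \to \Fpn^\times$ sending a power series automorphism to its leading coefficient has pro-$p$ kernel (the strict automorphisms), so it is injective on the prime-to-$p$ torsion; hence an element of order $n^2$ has leading coefficient of exact order $n^2$, i.e.\ a primitive root $\omega$, and $g_*(u) \equiv \omega^{\pm 1} u$ follows with no appeal to conjugacy. This is essentially the fact the paper invokes silently when it writes down the power series expansion of $g$. With that repaired, your proof is sound.
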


\begin{proof}
Let $\Gamma$ be the Lubin Tate formal group associated to $E_n$. Suppose that $h\in \G$ has power series representation $a_0 s +^{\G} \sum_{i\geq 1}^{\G} a_i s^{p^i}$.
Let $\BPP$ be periodic $\BP$-theory so that $\BPP_*\BPP$ parameterizes not-necessarily-strict $p$-typical power series.
Note that $\BPP_*\BPP = \Zp[v_0^{\pm},v_1,\ldots][t_0^{\pm},t_1,\ldots]$. Let $Z$ be a spectrum and let $z\in \BPP_*(Z)$. Write $\supK{z}$ for the image of $z$ in $K_*(Z)$. Suppose that $\Psi(z) = t_0^k\otimes z + \sum_{i} \theta_i \otimes z_i $ Then $h_*(\supK{z}) = a_0^k\supK{z} + \sum_{i>1} \theta_i(g)\supK{z}_i$.

In particular, the element $g$ of order $n^2$ has power series expansion $\omega s+^{\G} \sum_{i\geq 1}^{\G} a_i s^{p^i}$ where $\omega$ is some primitive $n^2$ root of unity. The coaction on a generator $v$ of $\BPP_*S^{2k+\epsilon}$ is given by $\Psi(v) = t_0^k\otimes v$. It follows that $g(v) = \omega^kv$.
\end{proof}

For $s\in \Z/2n^2$ write $\susp^{s}V_{l}$ for the $\KG$-module $K_*(\susp^s X_{l})$.
\begin{cor}
\label{splitting-from-E-theory}
Suppose that $M$ is an algebraic $\EO$-module and $\EEO_*(M)/\mfrak\cong \bigoplus_{k\in T} \susp^{\sbar_k}V_{l_k}$ as $\KG$-modules, where $T$ is some index set and $\sbar\in \Z/2n^2$. Then $M\simeq \EO\sm  \bigvee_{k\in T} \susp^{2ns_k} X_{l_k}$ where $s_k$ is some particular lift of $\sbar_k$ to $\Z/2p^2n^2$.
\end{cor}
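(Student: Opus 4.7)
The plan is to unpack the algebraic splitting of $M$, compute the $\KG$-module structure of each summand, and then match the result against the given decomposition by a Krull--Schmidt argument.

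By definition of algebraic, I would write $M \simeq \EO \sm \bigvee_{j \in J} \susp^{t_j} X_{m_j}$ for some shifts $t_j$ and lengths $m_j \in \{1, \ldots, p\}$. Since $X_l$ has cells only in degrees divisible by $2n$ and the suspensions $\susp^{t} (\EO \sm X_l)$ exhaust the shifts up to the $\EO$-module periodicity $2p^2n^2$, I may take each $t_j$ to be of the form $2n s_j$ with $s_j \in \Z/2p^2 n^2$. Using the identification $E\sm_{\EO}(\EO\sm Z)\simeq E\sm Z$ and reducing modulo $\mfrak$ yields
\[
\EEO_*(M)/\mfrak \;\cong\; \bigoplus_{j \in J} K_*(\susp^{2n s_j}X_{m_j})
\]
as $\KG$-modules. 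The preceding lemma computes the action of an order-$n^2$ generator $g \in G$ on $K_{2k+\epsilon}(S^0)$ as multiplication by $\omega^k$, and \Cref{E*X_l/m=V_l} identifies the $C_p$-action on $K_*(X_l)$ as $V_l$. Combining these identifies $K_*(\susp^{2ns} X_l)$ as a shift of $V_l$ whose $\KG$-module isomorphism type depends only on $l$ together with the class of $2ns$ in $\Z/2n^2$: parity pins down the $K_*$-grading, and the residue mod $n^2$ pins down the $g$-eigenvalues on each generator.

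Each $V_l$ is indecomposable as a $K_*[C_p]$-module and hence a fortiori as a $\KG$-module (any $\KG$-splitting would restrict to a $K_*[C_p]$-splitting), and distinct pairs $(\sbar, l) \in \Z/2n^2 \times \{1, \ldots, p\}$ yield pairwise non-isomorphic shifted modules. Krull--Schmidt in $\KG$-modules then produces a bijection $k \mapsto j(k)$ between $T$ and $J$ with $m_{j(k)} = l_k$ and $2n s_{j(k)} \equiv \sbar_k \pmod{2n^2}$; setting $s_k := s_{j(k)}$ exhibits $s_k \in \Z/2p^2n^2$ as the desired lift of $\sbar_k$ and yields $M \simeq \EO \sm \bigvee_{k \in T} \susp^{2n s_k} X_{l_k}$. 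I expect the one nontrivial step is the $\KG$-module calculation, which is handled cleanly by the two results just cited; the matching step is then routine Krull--Schmidt, and only mild book-keeping is needed to check that the cellular constraint on $X_l$ lets us recast an arbitrary integer shift $t_j$ in the form $2ns_j$.
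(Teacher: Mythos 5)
Your approach---unpack the algebraic splitting of $M$, compute the $\KG$-module structure of each summand via the preceding lemma together with \Cref{E*X_l/m=V_l}, and then match against the given decomposition by Krull--Schmidt---is exactly the argument the paper intends; the corollary is stated immediately after the $g$-action lemma with no explicit proof supplied, and the surrounding discussion (``we can use $\EEO_*(M)$ to determine an algebraic $\EO$-module $M$ up to loss of information about shifts'') confirms this reading. The one place where you go astray is the sentence ``I may take each $t_j$ to be of the form $2n s_j$ with $s_j \in \Z/2p^2n^2$.'' Nothing in the definition of an algebraic $\EO$-module forces the shifts to be divisible by $2n$, and $2n s_j$ with $s_j \in \Z/2p^2n^2$ is not even well-defined as a class in $\Z/2p^2n^2$. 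What is actually true, and what your Krull--Schmidt matching produces, is that the shift $\sigma_k \in \Z/2p^2n^2$ of $X_{l_k}$ reduces to $\sbar_k$ modulo $2n^2$: by the lemma, the $\KG$-module $K_*(\susp^{\sigma}X_l)$ depends on $\sigma$ only through its class mod $2n^2$ (the parity picks out the $K_*$-grading and the residue of $\lfloor\sigma/2\rfloor$ mod $n^2$ picks out the $g$-eigenvalue on the one-dimensional $C_p$-socle). The spurious factor of $2n$ in your write-up is an artifact of the ``$\susp^{2ns_k}$'' appearing in the statement, which looks like a typographical slip for ``$\susp^{s_k}$''---everywhere else the paper parametrizes shifts of $X_l$ by a single exponent $s_k\in\Z/2p^2n^2$ with no prefactor (compare \Cref{algebraic-EOmod-determined-by-E2n-AHSS} and the proof of \Cref{algebraic-EOmod-closed-under-union}). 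Once that factor is dropped from both the statement and your argument, your proof is correct and complete, and it is the same proof the paper leaves implicit.
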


So we can use $\EEO_*(M)$ to determine an algebraic $\EO$-module $M$ up to loss of information about shifts. We show now that the Atiyah-Hirzebruch spectral $E_{2n}$ page recovers the full homotopy type of an $\EO$-module.
\begin{lem}
\label{algebraic-EOmod-determined-by-E2n-AHSS}
Suppose that $M$ and $N$ are two algebraic $\EO$-modules, and suppose there is an isomorphism of bigraded $\EO_*$-modules $f\colon E_2\!\AHSS(M)\to E_2\!\AHSS(N)$. Let $E_2\!\AHSS(M)\cong \EO_*\{[x_i]\}_{i\in S}$ and suppose that $d_{2n}(f([x_i]))=f(d_{2n}([x_i]))$ for all $i\in S$. Then $M$ and $N$ are equivalent.
\end{lem}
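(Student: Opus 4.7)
The plan is to reduce the lemma to a Krull--Schmidt-type statement about bigraded $\EO_*$-modules equipped with a $d_{2n}$-differential. Since $M$ and $N$ are algebraic, fix decompositions $M\simeq \EO\sm\bigwsum_{i\in I}\susp^{s_i}X_{l_i}$ and $N\simeq \EO\sm\bigwsum_{j\in J}\susp^{t_j}X_{m_j}$. Two such wedges are equivalent as $\EO$-modules if and only if the multisets $\{(s_i,l_i)\}$ and $\{(t_j,m_j)\}$ agree, so it suffices to show that the hypotheses on $f$ force this equality.

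Each summand $\EO\sm \susp^s X_l$ contributes to $E_2\!\AHSS$ a free $\EO_*$-submodule of rank $l$ with generators $[y_0],\ldots,[y_{l-1}]$ in internal degrees $s,s+2n,\ldots,s+2(l-1)n$, and $d_{2n}[y_k]=\alpha_1[y_{k-1}]$ encodes the $\alpha_1$-attaching maps of $X_l$; by the algebraic hypothesis no further differentials or lower-filtration corrections appear, and $d_{2n}^2=0$ is consistent because $\alpha_1^2=0$ in $\EO_*$. This submodule is indecomposable as a bigraded $\EO_*$-module with $d_{2n}$-differential, since the $\alpha_1$-chain links all the $[y_k]$'s into a single orbit under $d_{2n}$ and any alleged direct-sum decomposition would have to partition the generators. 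Because $d_{2n}$ is $\EO_*$-linear on the $E_2$ page, the hypothesis that $f$ commutes with $d_{2n}$ on the given generators $[x_i]$ extends by linearity to all of $E_2\!\AHSS(M)$, so $f$ is an isomorphism in the category of bigraded $\EO_*$-modules with $d_{2n}$-differential. A Krull--Schmidt argument, distinguishing indecomposables by their $\EO_*$-rank $l$ and by the minimal internal bidegree $s$ of a generator, then produces a bijection between the chain summands of $M$ and those of $N$ preserving $(s,l)$. Hence the multisets $\{(s_i,l_i)\}$ and $\{(t_j,m_j)\}$ agree, and $M\simeq N$.

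The main obstacle is justifying the Krull--Schmidt step in this bigraded setting, since $\EO_*$ is not a local ring and the wedges need not be finite. The plan to handle this is to work one internal bidegree at a time, where only finitely many chain summands can contribute to any bounded range, and then assemble the local bijections into a global one using the bigrading data. A secondary subtlety is checking that no exotic indecomposable modules arise beyond the expected chain modules; this should follow from the fact that the differential acts by $\alpha_1$-multiplication and $\alpha_1$-torsion in $\EO_*$ is concentrated in a single degree, so the classification of finite-rank chain modules with $d_{2n}$ matches the expected list of $\susp^s$-shifts of $E_2\!\AHSS(\EO\sm X_l)$.
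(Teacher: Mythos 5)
Your proposal takes essentially the same approach as the paper's proof. The paper also reads off the wedge decomposition from the chain structure of $d_{2n}$ on the $E_2 = E_{2n}$ page of the Atiyah--Hirzebruch spectral sequence, arguing that a summand $\EO\sm\susp^{s}X_l$ corresponds to a rank-$l$ free $\EO_*$-submodule with generators linked by $\alpha$-multiplication under $d_{2n}$, and concluding that the $E_{2n}$ page together with $d_{2n}$ determines the module. You spell out the Krull--Schmidt uniqueness of the chain decomposition more explicitly and flag the non-local, possibly infinite setting as the main obstacle; the paper passes over this point at roughly the same level of rigor, simply asserting that decompositions of $M$ and of $E_{2n}\!\AHSS(M)$ correspond exactly. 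Your observation that the hypothesis $d_{2n}f([x_i])=f(d_{2n}[x_i])$ on a basis extends by $\EO_*$-linearity to all of $E_2$ is correct and is the same implicit step in the paper. So the two arguments are the same in substance.
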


\begin{proof}
An algebraic $\EO$-module $M$ is of the form $\EO\sm \bigwsum_{i\in S} \susp^{s_i}X_{l_i}$ where $s_i\in \Z/2p^2n^2$ and $l_i\in \{1,\ldots,p\}$.
The lengths and shifts are both determined by the $E_{2n}\!\AHSS(M)$ -- a summand of the form $\EO\sm \susp^{s_i}X_{l_i}$ corresponds to a summand of $E_{2n}\!\AHSS(M)$ which is an $l_i$-dimensional $\EO_*$-module on generators $\{[x_{0}],\ldots,[x_{i-1}]\}$ with differential $d_{2n}([x_{k}])=\alpha[x_{k-1}]$ for $k>0$ and $[x_{0}]$ a permanent cycle in the $s_i$ stem.
A decomposition of $M$ into summands of the form $\EO\sm \susp^{s_i}X_{l_i}$ corresponds exactly to a decomposition of $E_{2n}\!\AHSS(M)$ into summands of the form $E_{2n}\!\AHSS(\EO\sm \susp^{s_i}X_{l_i})$. It follows that $E_{2n}\!\AHSS(M)$ determines $M$.
\end{proof}

\begin{restatethis}{thm}{splitting-from-HFp}
Let $Z$ be a spectrum with algebraic $\EO$ theory. Decompose $\HFp_*(Z)$ into indecomposable $P(1)_*$-comodules, say $\HFp_*(Z)\cong \bigoplus_{i\in T} \susp^{s_i}W_{l_i}$ where $T$ is some index set. Then $\EO\sm Z\simeq \EO\sm \bigvee \susp^{s_i}X_{l_i}$.
\end{restatethis}

\begin{proof}
Pick an integral lift of the map $\HFp_*(Z)\to \bigoplus_{i\in T} \susp^{s_i}W_{l_i}$ to a map $\HZ_*(Z)\to \HZ_*\left(\bigvee \susp^{s_i}X_{l_i}\right)$.
This map induces an isomorphism
\[f\colon E_2\!\AHSS(\EO\sm Z)\to E_2\!\AHSS\left(\EO\sm \left(\bigvee \susp^{s_i}X_{l_i}\right)\right).\]
I claim that for $\{[x_i]\}_{i\in S}$ a basis for $\HZ_*(Z)$, we have $d_{2n}(f([x_i])) = f(d_{2n}(x_i))$.

Consider the map $\AHSS(Z)\to \AHSS(\EO\sm Z)$. Because $Z$ is torsion free, the shortest possible Atiyah-Hirzebruch differential is a $d_{2n}$ which is detected by the $P^1$ action on $\HFp_*(Z)$. Let $\rho$ be the reduction map $\HZ\to \HFp$.
Suppose that $x\in \HZ_{i}(Z)$ and $y\in \HZ_{i-2n}(Z)$. If $P^1(\rho(x)) = c\rho(y)$ where $c\in\Fp$ is some constant, then $d_{2n}([x])=c\alpha[y]$.
Since $1$ and $\alpha$ have nontrivial image in $\EO_*$ we deduce a differential $d_{2n}([x])=c\alpha[y]$ in $E_{2n}\!\AHSS(\EO\sm Z)$. Because the map $f\colon \HZ_*(Z)\to \HZ_*\left(\bigvee \susp^{s_i}X_{l_i}\right)$ was a lift of a map that commutes with $P^1$, we have also that $P^1(\rho(f(x))) = \rho(f(y))$.
We deduce that $f(d_{2n}([x]))=c\alpha f([y])=d_{2n}(f([x]))$. The hypotheses of \Cref{algebraic-EOmod-determined-by-E2n-AHSS} are met and we conclude that $\EO\sm Z\simeq \EO\sm \bigvee \susp^{s_i}X_{l_i}$.
\end{proof}

\begin{cor}
\label{B-iso-implies-EO-equiv}
If $X$ and $Y$ are connective spectra with algebraic $\EO$ theory and $\HFp_*(X)\cong \HFp_*(Y)$ as $P(1)_*$-comodules, then $\EO\sm X\simeq \EO\sm Y$.
\end{cor}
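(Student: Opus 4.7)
The corollary is essentially immediate from \Cref{splitting-from-HFp}, so my proof plan is just to record how to chain the theorem through the given isomorphism.

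First I would pick a decomposition of $\HFp_*(X)$ into indecomposable $P(1)_*$-comodules, say $\HFp_*(X)\cong \bigoplus_{i\in T}\susp^{s_i} W_{l_i}$. Since $P(1)_*$ is a finite-dimensional Hopf algebra, the Krull--Schmidt theorem applies to finite-dimensional $P(1)_*$-comodules in each internal degree, so the multiset of summands $\{(s_i,l_i)\}_{i\in T}$ is an invariant of the isomorphism class of $\HFp_*(X)$. In particular, transporting along the given isomorphism $\HFp_*(X)\cong \HFp_*(Y)$ gives a decomposition
\[\HFp_*(Y)\cong \bigoplus_{i\in T}\susp^{s_i} W_{l_i}\]
with exactly the same indexing data.

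Now I would apply \Cref{splitting-from-HFp} to each of $X$ and $Y$ separately, using these two decompositions. The hypothesis that both spectra are connective and have algebraic $\EO$ theory is exactly what is needed to invoke the theorem, which yields
\[\EO\sm X\simeq \EO\sm \bigvee_{i\in T}\susp^{s_i} X_{l_i}\simeq \EO\sm Y.\]
Composing these two equivalences gives the desired equivalence $\EO\sm X\simeq \EO\sm Y$.

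There is no real obstacle here; the only point that deserves a sentence in the write-up is the uniqueness of the indecomposable decomposition of $\HFp_*(X)$ as a $P(1)_*$-comodule, which is what makes the indexing data transport along the given abstract isomorphism rather than being a choice that could a priori differ on the two sides.
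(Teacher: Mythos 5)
Your proposal matches the paper's intent exactly: the paper states this as an immediate corollary of \Cref{splitting-from-HFp} with no written proof, and applying that theorem to $X$ and $Y$ against the same decomposition is precisely the intended argument. One small remark: the Krull--Schmidt aside is not actually needed, since once you fix a decomposition of $\HFp_*(X)$ you can simply transport it along the given isomorphism to get one for $\HFp_*(Y)$, with no appeal to uniqueness.
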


Now we show that algebraic $\EO$-modules are closed under ``unions.''
\begin{prop}
\label{algebraic-EOmod-closed-under-union}
Suppose that $M_1\to M_2\to \cdots$ is a diagram of algebraic $\EO$-modules such that each map $M_i\to M_{i+1}$ induces an injection $\EEO_*(M_i)/\mfrak\to \EEO_*(M_{i+1})/\mfrak$. Then $\hocolim M_i$ is an algebraic $\EO$-module.
\end{prop}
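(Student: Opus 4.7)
The plan is to invoke the characterization stated immediately after \Cref{defn-algebraic-EO-theory}: a cellular $\EO$-module is algebraic if and only if its Atiyah-Hirzebruch spectral sequence has only $d_{2n}$ differentials. It then suffices to exhibit $L := \hocolim M_i$ as a cellular $\EO$-module whose AHSS collapses after $d_{2n}$.

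First I would inductively re-choose the wedge decompositions $M_i = \EO \sm \bigvee \susp^{s_{i,j}} X_{l_{i,j}}$ so that each map $f_i \colon M_i \to M_{i+1}$ becomes a cellular inclusion, meaning every cell of $M_i$ is a cell of $M_{i+1}$. The key algebraic input is that $\KCp = K_*[s]/(s^p)$ is a uniserial Artinian ring whose only indecomposable cyclic modules are the $V_l$ for $1 \leq l \leq p$; consequently an injection between direct sums of $V_l$'s can, after a suitable change of basis on the source, be realized as the inclusion of a sub-direct-sum of the target. The hypothesis that $\overline{f_i} \colon \EEO_*(M_i)/\mfrak \hookrightarrow \EEO_*(M_{i+1})/\mfrak$ is injective is precisely what permits this adjustment at each stage, and \Cref{splitting-from-E-theory} lifts the algebraic adjustment to a spectrum-level re-choice of wedge decomposition of $M_i$.

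With compatible cellular structures in place, $L$ inherits a cellular structure whose cells are the union over $i$ of the cells of the $M_i$. The Atiyah-Hirzebruch spectral sequence for $L$ is then the filtered colimit of the AHSSs for the $M_i$; since filtered colimits in abelian categories are exact, each $E_r$ page and each $d_r$ of the AHSS for $L$ equals the colimit of the corresponding structures on the $M_i$. Because each $M_i$ is algebraic, only $d_{2n}$ differentials appear in its AHSS, so only $d_{2n}$ differentials appear in that of $L$. The characterization recalled above then concludes that $L$ is algebraic.

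The hard part will be the inductive construction in the first step, namely arranging the wedge decompositions coherently across the whole sequence so that the adjustments made at one stage are compatible with the adjustments required at later stages. The injectivity hypothesis is essential: without it, $V_l$-summands of $\EEO_*(M_i)/\mfrak$ could be collapsed by $\overline{f_i}$, and the resulting cell-cancellation in the colimit could produce longer AHSS differentials in $L$.
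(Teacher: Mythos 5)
Your proof hinges on the claim that ``an injection between direct sums of $V_l$'s can, after a suitable change of basis on the source, be realized as the inclusion of a sub-direct-sum of the target.'' This is false. The ring $\KCp\cong K_*[s]/(s^p)$ is indeed uniserial and even self-injective, but its only indecomposable module that is injective (equivalently, free) is $V_p$; the modules $V_l$ for $l<p$ are not injective, and so injections out of them need not split. The simplest counterexample is the socle inclusion $V_1\hookrightarrow V_2$ (the map $s\colon V_2\to V_2$ factored through its image), which cannot be made into a sub-direct-sum inclusion because $V_1$ is not a direct summand of $V_2$. More generally, for any $l<m\leq p$ there is an injection $V_l\hookrightarrow V_m$ that does not split, and these arise topologically from maps such as $\susp^{2n(m-l)}\EO\sm X_l\to \EO\sm X_m$ collapsing the bottom cells. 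Consequently the inductive re-choice of wedge decompositions so that each $M_i\to M_{i+1}$ becomes a cellular inclusion is not available, and the colimit-of-AHSS argument never gets off the ground. (A secondary issue: even if the algebraic re-basing were possible, \Cref{splitting-from-E-theory} only determines the homotopy type of an algebraic module from its $E$-theory; it does not by itself lift an algebraic change of basis to a topological one, so you would still need an argument---of roughly the type the paper actually uses---that realizes algebraic maps by spectrum maps.)

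The paper's own proof sidesteps compatibility of cellular structures entirely. It fixes a decomposition $\EEO_*(\hocolim M_i)/\mfrak\cong\bigoplus_j V_{l_j}$, observes that each finite summand $V_{l_j}$ is already hit surjectively from some finite stage $\EEO_*(M_i)/\mfrak$ by compactness, uses the algebraicity of $M_i$ to produce a map $\EO\sm\susp^{s_j}X_{l_j}\to M_i\to M$, and then verifies that the wedge of all these maps is an equivalence by checking it on $E$-theory mod $\mfrak$, invoking Nakayama, and comparing homotopy fixed point spectral sequences. This one-shot construction of a comparison map is what replaces the compatible-cell-structure bookkeeping you were attempting; the injectivity hypothesis is used there to see that the surjection onto a $V_{l_j}$ from a finite stage gives rise to a summand at that stage, not to split the transition maps themselves.
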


\begin{proof}
Write $\EEO_*(M)/\mfrak\cong \bigoplus_{j\in S} V_{l_j}$ as $\KCp$-modules. To show that $M$ is algebraic, we need to show that this splitting lifts to a splitting of $M$. Pick some summand $V_{l_j}$ of $\EEO_*(M)/\mfrak$. Because $V_{l_j}$ is finite dimensional, for some $i$ sufficiently large, $\EEO_*(M_i)/\mfrak\to \EEO_*(M)/\mfrak \to V_{l_j}$ is a surjection. Since the map $\EEO_*(M_i)/\mfrak\to \EEO_*(M)/\mfrak$ is an injection, we deduce that there is a splitting $\EEO_*(M_i)\cong V_{l_j}\bigoplus W$ and because $M_i$ is algebraic, this lifts to a splitting $M_i\simeq \EO\sm \susp^{s_j}X_{l_j}$ for some $s_j\in \Z/2p^2n^2$. This gives a map $\iota_j\colon \EO\sm \susp^{s_j}X_{l_j}\to M$ which induces the inclusion $V_{l_j}\to \EEO_*(M)/\mfrak$. Summing the maps $\iota_j$ as $j\in S$ varies gives a map from $\EO\sm\bigwsum_{j\in S} \susp^{s_j}X_{l_j}\to M$ which induces an isomorphism $E_*\left(\bigwsum_{j\in S} \susp^{s_j}X_{l_j}\right)/\mfrak\to \EEO_*(M)/\mfrak$. By Nakayama's lemma, this also induces an isomorphism $E_*\left(\bigwsum_{j\in S} \susp^{s_j}X_{l_j}\right) \to \EEO_*(M)$ which implies that there is an isomorphism of $E_2$ pages $\HFPSS\left(\EO\sm \bigwsum_{j\in S} \susp^{s_j}X_{l_j}\right) \to \HFPSS(M)$. It follows that the map $\EO\sm \bigwsum_{j\in S} \susp^{s_j}X_{l_j} \to M$ is an equivalence, and hence $M$ is algebraic.
\end{proof}

\subsection{A brief review of the homotopy fixed point spectral sequence for \texorpdfstring{$\EO$}{EO}}
\label{subsec:review-of-EO*}
There is a map $\ANSS(S^0)\to\HFPSS(\EO)$ from the Adams Novikov spectral sequence for the sphere to the homotopy fixed point spectral sequence $H^*_G(E_*)\Rightarrow \EO_*$.
Hopkins and Miller computed the homotopy fixed point spectral sequence for $\EO$ up to some permanent cycles on the zero line. The $E_2$ page is isomorphic to $\Fp[\alpha,\beta,u^{\pm}]$ in positive filtration, where $\alpha\in(2n-1,1)$ and $\beta\in(2pn-2,2)$ are the images of $\alpha_1$ and $\beta_1$ in $\ANSS(S^0)$ and $u\in(2pn^2,0)$ is a norm class. There are two differentials $d_{2n+1}(u)=\alpha\beta^n$ and $d_{2n^2+1}(\alpha u^n)=\beta^{n^2+1}$ in $\HFPSS(\EO)$. All other differentials are generated by these two using the Leibniz rule. The $E_{\infty}$ page has a horizontal vanishing line at filtration $2n^2+2$. The element $u^p$ is a permanent cycle, which gives $\EO$ theory a $2p^2n^2$ periodicity. The homotopy of $\EO_*$ for $p=3$ and $p=5$ is illustrated in \Cref{fig:homotopy-of-EO}. See the account of this spectral sequence in \cite[section 2]{nave} for more details.

\subsection{Conditions for an \texorpdfstring{$\EO$}{EO}-module to be algebraic}

For $Z$ a connective spectrum, we use the cellular filtration of $Z$ to get filtrations of $\BP_*(Z)$ and $\E_*(Z)$ which gives algebraic Atiyah-Hirzebruch spectral sequences. If $Z$ is torsion free, this has the following form:
\begin{align*}
\algAHSS(Z)\colon \Ext_{\BP_*\BP}(\BP_*,\BP_*)\otimes \HZ_*(Z)&\Rightarrow \Ext_{\BP_*\BP}(\BP_*,\BP_*(Z))\\
\algAHSS(\EO \sm Z)\colon \phantom{\,\Ext_{\BP_*\BP}(\BP_*,\BP_*)} \mathllap{H^*_G(E_*)}\otimes \HZ_*(Z)&\Rightarrow H^*_G(\E_*(Z))
\end{align*}
The map $\ANSS(Z)\to \HFPSS(\EO\sm Z)$ induces a map $\algAHSS(Z)\to \algAHSS(\EO\sm Z)$.
The homology of $X_l$ is $\HZ_*(X_l) = \Z\{x_0,\ldots,x_{l-1}\}$ with $x_i$ in degree $2in$, so the $E_2$ page of $\algAHSS(\EO\sm X_l)$ is isomorphic modulo trace classes to $\Fp[\alpha,\beta,u^{\pm}]\{[x_0],\ldots, [x_{l-1}]\}$.

\label{subsec:checking-an-EO-module-is-algebraic}
For a connective spectrum $Z$, denote by $\HI_d(Z)$ the Hurewicz image of $\pi_d(Z)\to \EO_d(Z)$.
\begin{prop}
\label{Hurewicz-image-Xl}
Let $1\leq l < p$ and let $\alpha^{(l)} \in \EO_{2nl - 1}(X_l)$ be the Hurewicz image of the attaching map for the top cell of $X_{l+1}$. Then $\alpha^{(l)}$ is nonzero and spans the Hurewicz image in $\EO_{2nl-1}(X_l)$. If $k\neq l$ then the Hurewicz image in $\EO_{2kn-1}(X_l)$ is zero. If $l<p$, projection onto the top cell $X_{l}\to S^{2n(l-1)}$ induces an isomorphism $\HI_{2nk-1}(X_l)\to \HI_{2nk-1}(S^{2n(l-1)})$.
\end{prop}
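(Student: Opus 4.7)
Plan:

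The argument runs by induction on $l$. For the base case $l=1$, we have $X_1 = S^0$ and $\alpha^{(1)} = \alpha_1$; non-triviality of $h(\alpha_1)$ in $\EO_{2n-1}$ is visible in Figure \ref{fig:homotopy-of-EO}. For the vanishing of $\HI_{2nk-1}(S^0)$ when $k \neq 1$, note that the image of $\ANSS(S^0) \to \HFPSS(\EO)$ lies in the subalgebra generated by $\alpha$ and $\beta$, so odd-stem Hurewicz classes have the form $\alpha^a \beta^b$ with $a$ odd. Graded commutativity forces $a = 1$, and the stem condition $(2n-1) + b(2np-2) = 2nk - 1$ becomes $b(np-1) = n(k-1)$, which forces $n \mid b$. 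Since $\alpha \beta^n = d_{2n+1}(u) = 0$ in $\EO_*$, every $\alpha \beta^{jn}$ with $j \geq 1$ vanishes, leaving only $\alpha$ itself in stem $2n-1$.

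For the inductive step, compare the long exact sequences in $\pi_*$ and $\EO_*$ coming from the cofiber sequence $X_{l-1} \xrightarrow{i} X_l \xrightarrow{q} S^{2n(l-1)}$, linked via the Hurewicz map. When $k = l$, the identity $q \circ \alpha^{(l)} = \alpha_1$ gives $q_*(h(\alpha^{(l)})) = h(\alpha_1) \neq 0$, so $\alpha^{(l)}$ is nonzero. Given any $z \in \pi_{2nl-1}(X_l)$, $q_*(z)$ lies in $\pi_{2n-1}(S^0)_{(p)} = \langle \alpha_1 \rangle$, so subtracting a scalar multiple of $\alpha^{(l)}$ arranges $q_*(z) = 0$; then $z$ lifts to $\pi_{2nl-1}(X_{l-1})$, whose Hurewicz image is zero by induction (since $l \neq l - 1$), yielding $\HI_{2nl-1}(X_l) = \langle h(\alpha^{(l)})\rangle$. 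When $k < l$, the sphere $\pi_{2n(k-l+1)-1}(S^0)$ sits in a non-positive stem and vanishes, so every $z$ lifts through $i_*$ and induction applies; the boundary case $k = l-1$ uses that $i_*(\alpha^{(l-1)}) = 0$ because its cone is $X_l$.

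The main obstacle is the case $k > l$. Naturality gives $q_*(h(z)) \in \HI_{2n(k-l+1)-1}(S^0) = 0$ by the base case, so $h(z)$ lifts to $\EO_{2nk-1}(X_{l-1})$; but that lift need not itself be a Hurewicz image, so direct appeal to the inductive hypothesis does not force $h(z) = 0$. To handle this I would establish the top-cell isomorphism $q_* \colon \HI_{2nk-1}(X_l) \xrightarrow{\sim} \HI_{2nk-1}(S^{2n(l-1)})$ directly and deduce the vanishing from the base case applied to $S^{2n(l-1)}$. Surjectivity is immediate (the right side is $\langle \alpha_1\rangle$ for $k = l$ and zero otherwise, and $\alpha^{(l)}$ hits $\alpha_1$). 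Injectivity is the heart of the matter, and I would prove it by a filtration argument inside $\HFPSS(\EO \sm X_l)$ using the $K_*[C_p]$-module structure $E_*(X_l) \cong V_l$ from \Cref{E*X_l/m=V_l}, the comparison map $\ANSS(X_l) \to \HFPSS(\EO \sm X_l)$ of \Cref{HFPSS}, and the same differentials $d_{2n+1}(u) = \alpha\beta^n$ and $d_{2n^2+1}(\alpha u^n) = \beta^{n^2+1}$ that control $\HFPSS(\EO)$. The decisive calculational input is again that $\alpha\beta^n = 0$ on the $E_\infty$-page, which is what ultimately rules out any $\ANSS$-image class in stems $2nk-1$ of $\HFPSS(\EO \sm X_l)$ other than the one arising from $\alpha^{(l)}$ on the top cell.
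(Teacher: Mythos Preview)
Your base case contains the main gap. You assert that ``the image of $\ANSS(S^0) \to \HFPSS(\EO)$ lies in the subalgebra generated by $\alpha$ and $\beta$,'' but this is precisely the nontrivial content of the $l=1$ case and you give no argument for it. The $E_2$ page of $\HFPSS(\EO)$ in positive filtration is $\Fp[\alpha,\beta,u^{\pm}]$ with $u$ (a norm class) in filtration~$0$, so there are classes $\alpha u^j$ sitting in stem $2n(1+jpn)-1$ for each $j$. These are exactly in the stems $2nk-1$ you need to analyze, and a priori the Novikov $1$-line class $\alpha_{1+jpn}$ could hit $\alpha u^j$. The paper rules this out by the Massey product identity $\alpha_{npk+1}=\toda{\alpha_{npk},p,\alpha_1}$ (with indeterminacy in higher filtration, which vanishes in the target), together with the observation that $\alpha_{npk}$ maps to zero in $\HFPSS(\EO)$ by sparsity. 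Without this step your base case is incomplete: your congruence argument $n\mid b$ and the relation $\alpha\beta^n=0$ only dispose of classes in the $\Fp[\alpha,\beta]$ subalgebra, not the $\alpha u^j$.

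Your inductive scheme via the cofiber sequence $X_{l-1}\to X_l\to S^{2n(l-1)}$ is a reasonable alternative to the paper's direct use of the algebraic Atiyah--Hirzebruch spectral sequence, and your treatment of $k\le l$ is fine. But as you yourself note, the case $k>l$ does not go through by induction: a lift of $h(z)$ to $\EO_{2nk-1}(X_{l-1})$ need not be a Hurewicz class. At that point you fall back on a filtration argument inside $\HFPSS(\EO\sm X_l)$, which is essentially the paper's method. The paper handles all $l$ at once: in $\algAHSS(\EO\sm X_l)$ the only classes in stems $\equiv -1\pmod{2n}$ are $\alpha u^j[x_i]$; the (repaired) base case forces $j=0$; and the Atiyah--Hirzebruch differentials $d_{2n}([x_{i+1}])=\alpha[x_i]$ kill all $\alpha[x_i]$ except the one on the top cell. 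If you supply the Massey-product step in the base case, either route closes, but the algAHSS argument is cleaner and avoids the awkward $k>l$ branch.
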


\begin{proof}
Consider the map of spectral sequences $\ANSS(X_l) \to \HFPSS(\EO\sm X_l)$. In the degree we are considering, $\HFPSS(\EO\sm X_l)$ only contains elements in filtration one and in the degree we're considering $\ANSS(X_l)$ contains no elements in filtration zero, so no filtration jumping can happen and it suffices to understand the image of the map $\ANSS(X_l) \to \HFPSS(\EO\sm X_l)$. We will first handle the case when $l=1$ and $X_l=S^0$, and then we will use algebraic Atiyah-Hirzebruch spectral sequences to deduce the case for larger $l$.

Suppose that $l=1$. We have that $\pi_{2n-1}(\EO) = \Fp\{\alpha_1\}$. We want to show that if $k\neq 1$ then $\HI_{2nk-1}(\EO) = 0$.
We will show that this is true on the $E_{2n+1}$ page of $\ANSS(S^0) \to \HFPSS(\EO)$.
Refer to \vref{fig:homotopy-of-EO} for the $E_{\infty}$ page of $\HFPSS(\EO)$ for $p=3$ and $p=5$.
Because the only element of the $0$-line of $\ANSS(S^0)$ is $1\in \pi_0$, nothing else in the zero line of $\HFPSS(\EO)$ is in the Hurewicz image, so we need only study positive filtration.
The $E_2$ page of $\HFPSS(\EO)$ is isomorphic in positive filtration to $\Fp[\alpha,\beta,v]$ where $\alpha\in(2n-1,1)$, $\beta\in(2pn-2,2)$ and $v\in (2pn^2,0)$. So $|\alpha|\equiv -1$,  $|\beta|\equiv -2$ and $|v|\equiv 0\pmod{2n}$.
The elements of the $E_2$ page in degree $-1\pmod{2n}$ are $\alpha\beta^{in}v^j$. There is a differential  $d_{2n}(v)=\alpha\beta^{n}$,  so all elements on the $E_\infty$ page in degree $-1\pmod{2n}$ are of the form $\alpha v^j$ and hence are in the 1-line.
I claim that if $j\neq 0$ then $\alpha v^j$ is not in the image of the map on $E_2$ pages.  The only element of the Novikov 1-line in the degree of $\alpha v^j$ is $\alpha_{npk+1}$.
There is a Massey product in the Novikov $E_2$ page $\alpha_{npk+1} = \toda{\alpha_{npk},p,\alpha_1}$ with indeterminacy in filtration greater than 1.
Because the homotopy fixed point spectral sequence contains no elements in the same stem as $\alpha v^j$ in filtration greater than 1, the indeterminacy of this Massey product maps to zero in the $E_2$ page of the homotopy fixed point spectral sequence. By sparsity $\alpha_{npk}\mapsto 0$. It follows that $\alpha_{npk+1}\mapsto 0$ too, and $\alpha v^j$ is not in the Hurewicz image. This settles the $l=1$ case.

Now we use the algebraic Atiyah Hirzebruch spectral sequence to reduce the case $l>1$ to the case $l=1$. \Vref{fig:algAHSS-EO2-X3} is an illustration of $\algAHSS(\EO\sm X_3)$.
Because the cells of $X_l$ are in degrees congruent to $0 \pmod{2n}$, and because $\alpha\beta^n = 0\in \EO_*$, the only elements of $\algAHSS(\EO\sm X_l)$ in degrees congruent to $-1\pmod{2n}$ are those of the form $\alpha v^j [x_i]$. By the $l=1$ case, the only such elements that are hit in the $E_2$ page of the map $\algAHSS(X_l)\to \algAHSS(\EO\sm X_l)$ are $\alpha[x_i]$. Since each attaching map in $X_l$ is given by an $\alpha$, there are Atiyah-Hirzebruch differentials $d_{2n}([x_{i+1}])=\alpha[x_i]$ so all of these elements are zero in homotopy except for $\alpha[x_l]$. If $l<p$, this is a permanent cycle which detects $\alpha^{(l)}$.
\end{proof}

\sseqset{Zclass/.style={rectangle}, pZclass/.style={rectangle,fill=none}}

\NewSseqCommand\alphaclass {} {\class(\lastx+3,\lasty+1)}

\NewSseqGroup\EO {} {
    \foreach \v in {0,...,3}{
        \class(24*\v,0)
        \alphaclass
        \structline
        \foreach \b in {1,...,9}{
            \class(\lastx{1}+10,\lasty{1}+2)
            \structline(\lastclass2)
            \class(\lastx{1}+10,\lasty{1}+2)
            \structline(\lastclass2)
	    \structline
        }
       \classoptions[Zclass](24*\v,0)
    }
}

\begin{sseqdata}[name=algAHSS-EO2-X3,Adams grading,
    classes=fill,
    width=400pt,
    height=200pt,
    run off struct lines=-,
    y range={0}{7},x range={0}{72},
    x tick step=5,]
\EO(0,0)
\EO[blue](4,0)
\EO[red](8,0)
\foreach \v in {0,1,2}
\foreach \c in {1,2}
\foreach \b in {0,...,8}{
	\d1(4*\c+24*\v+10*\b,2*\b)
}

\foreach \v in {0,1,2}
\foreach \b in {0,...,8}{
	\d1(8+3+24*\v+10*\b,2*\b+1)
}
\end{sseqdata} 
\afterpage{
    \clearpage
    \begin{figure}[H]
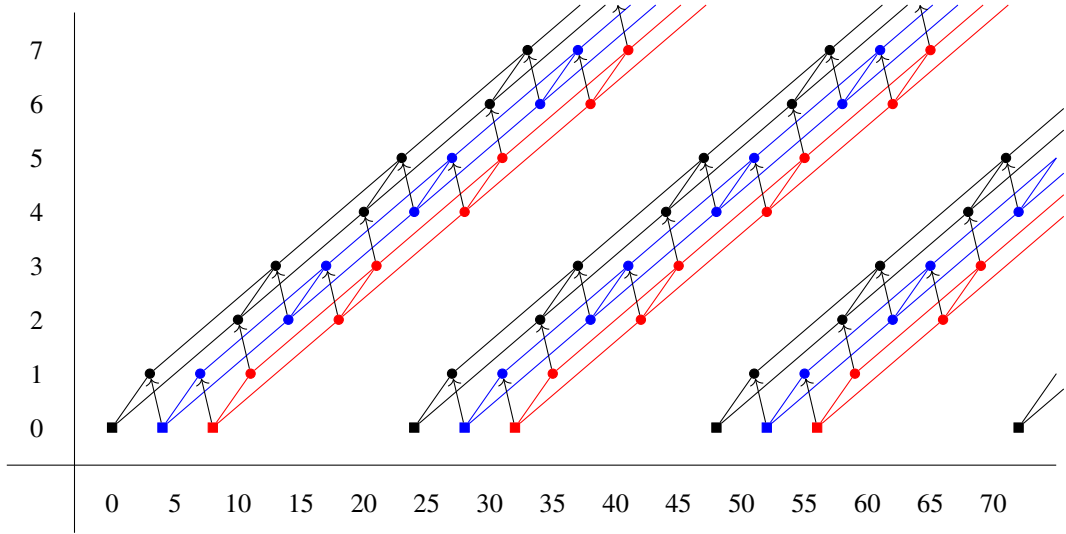

    \printpage[name=algAHSS-EO2-X3]
    \caption{The algebraic AHSS for $\EO_*X_3$ at $p=3$}
    \label{fig:algAHSS-EO2-X3}
    \end{figure}
    \leavevmode
}
\begin{lem}
\label{HI-subspace}
Suppose that $Y$ is a connective $2n$-sparse spectrum with cells in degrees $0\pmod{2n}$. Then the map $Y\to Y_{(2nk)}$ induces an injection $\HI_{2n(k+1)-1}(Y)\to \HI_{2n(k+1)-1}(Y_{(2nk)})$. The map $Y_{(2nk)}^{(2nk)}\to Y_{(2nk)}$ induces a surjection $\HI_{2n(k+1)-1}\left(Y_{(2nk)}^{(2nk)}\right)\to \HI_{2n(k+1)-1}(Y_{(2nk)})$. Thus $\HI_{2n(k+1)-1}(Y)$ is a subquotient of $\HI_{2n(k+1)-1}\left(Y_{(2nk)}^{(2nk)}\right)$.
\end{lem}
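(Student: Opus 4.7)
The plan is to prove the injection and surjection separately; the subquotient conclusion is then immediate. The central input throughout is \Cref{Hurewicz-image-Xl}: $\HI_{2n-1}(S^0) = \Fp\{\alpha\}$ while $\HI_{2nl-1}(S^0) = 0$ for $l \ne 1$. Combined with $2n$-sparsity of $Y$, this collapses the spectral-sequence bookkeeping enough to isolate the Hurewicz-image contributions.

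I would do the surjection first, as it is easier. By $2n$-sparsity, the cofiber $Y_{(2nk)}/Y_{(2nk)}^{(2nk)}$ has its lowest cell in dimension $2n(k+1)$, so $\pi_{2n(k+1)-1}$ of this cofiber vanishes. The long exact sequence for $Y_{(2nk)}^{(2nk)} \to Y_{(2nk)} \to Y_{(2nk)}/Y_{(2nk)}^{(2nk)}$ then gives a surjection on $\pi_{2n(k+1)-1}$, which restricts to a surjection on Hurewicz images.

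For the injection, the key intermediate claim I would establish is $\HI_{2n(k+1)-1}(Y^{(2n(k-1))}) = 0$. Comparing the Atiyah--Hirzebruch spectral sequences for $\pi_*$ and $\EO_*$ of $Y^{(2n(k-1))}$: the nonzero $E_2$-positions in stem $2n(k+1)-1$ are $(2nj, 2n(k+1-j)-1)$ for $0 \le j \le k-1$, all satisfying $k+1-j \ge 2$, so \Cref{Hurewicz-image-Xl} forces the Hurewicz map on $E_2$ to vanish in this stem, hence on $E_\infty$, hence on the total Hurewicz image (the filtration being bounded). The same input shows that in the $\EO$-AHSS of $Y_{(2nk)}$, the higher incoming differentials $d_{2nj}$ with $j \ge 2$ landing at the position $(2nk, 2n-1)$ all vanish, because each is detected by a higher-order attaching class in $\pi_{2nj-1}(S^0)$ whose Hurewicz image is zero. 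Consequently the only surviving relations on the $\alpha$-classes in $\EO_{2n(k+1)-1}(Y_{(2nk)})$ come from the primary $d_{2n}$-attachings, and these match the long-exact-sequence relations computing $\pi_{2n(k+1)-1}(Y_{(2nk)})$ under $\alpha_1 \leftrightarrow \alpha$. Thus the Hurewicz map $\pi_{2n(k+1)-1}(Y_{(2nk)}) \to \EO_{2n(k+1)-1}(Y_{(2nk)})$ is injective.

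Putting it together, if $x = h(\tilde{x}) \in \HI_{2n(k+1)-1}(Y)$ maps to zero in $\EO_*(Y_{(2nk)})$, then the image $\tilde{x}_1 \in \pi_{2n(k+1)-1}(Y_{(2nk)})$ has zero Hurewicz image and hence vanishes by the injectivity just established; the long exact sequence of $Y^{(2n(k-1))} \to Y \to Y_{(2nk)}$ then lifts $\tilde{x}$ to $\tilde{x}' \in \pi_{2n(k+1)-1}(Y^{(2n(k-1))})$, giving $x = h(\tilde{x}') = 0$. The main obstacle is the AHSS argument ruling out higher incoming differentials at $(2nk, 2n-1)$ in $\EO_*(Y_{(2nk)})$; everything else is routine long-exact-sequence chasing.
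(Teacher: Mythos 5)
Your decomposition into a surjectivity argument and an injectivity argument is a genuinely different organization from the paper, which proves the subquotient statement in one stroke by showing that every class in $\HI_{2n(k+1)-1}(Y)$ is detected in Atiyah--Hirzebruch filtration exactly $2nk$: since $\HI_{2nl-1}(S^0) = 0$ for $l \ne 1$ while $\HI_{2n-1}(S^0) = \Fp\{\alpha\}$, the only $E_\infty$-cells of $\AHSS(\EO\sm Y)$ that can support a Hurewicz-image class in stem $2n(k+1)-1$ are of the form $\alpha[x]$ with $\lvert x\rvert = 2nk$. Your surjectivity argument (cofiber connectivity plus the long exact sequence) is correct and is in fact the routine half.

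The injectivity argument, however, has a real gap. The step in establishing your claim (a) that reads ``the Hurewicz map vanishes on $E_2$, hence on $E_\infty$, hence on the total Hurewicz image (the filtration being bounded)'' is not a valid inference. A filtration-preserving map of finitely filtered abelian groups can induce the zero map on all associated graded pieces and still be nonzero on the total group: multiplication by $2$ on $\Z/4$, filtered by $0\subset 2\Z/4\subset \Z/4$, already does this. In the AHSS this is precisely the phenomenon of filtration jumping --- the image $h(\tilde\xi)$ of a class detected in filtration $f$ may be detected by an $E_\infty$-class in filtration strictly less than $f$, and that lower-filtration detecting class need not lie in the image of the map on $E_\infty$-pages. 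So vanishing of the map on $E_\infty$ does not by itself kill $\HI_{2n(k+1)-1}(Y^{(2n(k-1))})$; one has to rule out these jumps. Your claim (b), injectivity of the full Hurewicz map $\pi_{2n(k+1)-1}(Y_{(2nk)})\to \EO_{2n(k+1)-1}(Y_{(2nk)})$, is both stronger than anything the lemma requires and is only supported by a sketch (``the only surviving relations \ldots match the long-exact-sequence relations''); matching $E_\infty$-pages again does not give injectivity on the filtered groups without an extension argument. The paper avoids this by not separating the two implications and not invoking anything like claim (b); it reduces everything to the single filtration statement. If you want to run your version, the right fix is to prove that the Hurewicz image inherits a filtration on which the associated graded is concentrated in degree $2nk$, which is essentially the paper's claim, rather than asserting that a map which is zero on $E_\infty$ is zero.
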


\begin{proof}
Consider $\AHSS(\EO\sm Y)$. The Hurewicz image $\HI_{2(k+1)n-1}(S^0)$ is zero unless $k=0$ and $\HI_{2n-1}(S^0) = \Fp\{\alpha\}$ so the classes in $\HI_{2in-1}Y$ are exactly those detected by an element of the form $\alpha[x]$ for $x\in \HZ_*Y$. Since the degree of $\alpha$ is $2n-1$, the degree of $\alpha[x]$ is $2n-1 + |x|$. For $\alpha[x]$ to be in degree $2n(k+1)-1$, the homology class $x$ should be in degree $2nk$. We deduce that every class in $\HI_{2n(k+1)-1}(Y)$ is detected in Atiyah-Hirzebruch filtration $2nk$ so that $\HI_{2n(k+1)-1}(Y)$ is a subquotient of $\HI_{2n(k+1)-1}\left(Y_{(2nk)}^{(2nk)}\right)$.
\end{proof}

\begin{lem}
\label{cone-alphas-algebraic}
Let $M=\EO\sm\bigvee_{i\in T} \susp^{s_i} X_{l_i}$ be an algebraic $\EO$-module. Suppose that $f\colon \bigwsum_{S}\susp^{s_i}\EO\to M$ is some $\EO$ module map such that the homotopy class of each component $\susp^{s_i}\EO\to M$ is contained in $\Fp\{\susp^{s_i}\alpha^{(l_i)}\}_{i\in U}\subseteq \pi_s(M)$ where $U\subseteq T$ is the subset of $i\in T$ such that $l_i<p$ and $s_i+2n(l_i-1)=s$. Then $C(f)$ is an algebraic $\EO$-module.
\end{lem}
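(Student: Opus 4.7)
The plan is to reduce $f$ to a normal form via change of basis on source and target, after which $C(f)$ is manifestly a wedge of algebraic pieces. First, split $f$ according to source degree: $f = \bigvee_s f_s$, where $f_s$ collects the source components in degree $s$ and lands in $N_s := \bigvee_{i\in U_s}M_i$, with $U_s\subseteq U$ the set of indices matching degree $s$. Since $f$ never meets the summands $M_i$ for $i\notin U$, we have $C(f) = \bigl(\bigvee_s C(f_s)\bigr)\vee\bigl(\bigvee_{i\notin U}M_i\bigr)$, so it suffices to treat a single $f_s$.

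The map $f_s$ is recorded by an $\Fp$-matrix $(c_{ji})$ expressing each component in the basis $\{\iota_i(\susp^{s_i}\alpha^{(l_i)})\}_{i\in U_s}$ of the subspace $V_s\subseteq\pi_s(N_s)$. Row operations are realized by $\Fp$-linear automorphisms of the source. For column operations, the crucial construction is: whenever $l_i\geq l_{i'}$ with $i,i'\in U_s$, the degree constraint forces $s_{i'}=s_i+2n(l_i-l_{i'})$, so the bottom $(l_i-l_{i'})$ cells of $\susp^{s_i}X_{l_i}$ form the subcomplex $\susp^{s_i}X_{l_i-l_{i'}}$, and the cellular quotient defines an $\EO$-module map $q_{i,i'}\colon M_i\to M_{i'}$. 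Because $l_i<p$, the analogous quotient $\susp^{s_i}X_{l_i+1}\to\susp^{s_{i'}}X_{l_{i'}+1}$ is also defined, and the resulting map of attaching-map cofiber sequences (with the identity on the shared top cell) shows $(q_{i,i'})_*\bigl(\susp^{s_i}\alpha^{(l_i)}\bigr)=\pm\susp^{s_{i'}}\alpha^{(l_{i'})}$. For any $c\in\Fp$, the unipotent automorphism $\mathrm{id}_{N_s}+c\,\iota_{i'}q_{i,i'}\pi_i$ of $N_s$ then executes the elementary column operation ``add $c$ times column $i$ to column $i'$'' on $V_s$.

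Now order the columns so that $l_i$ is weakly decreasing left-to-right, and row-reduce the matrix to RREF. Each pivot row then has a leading $1$ at its pivot column, zeros to the left, and possibly nonzero entries to the right. Since the pivot column has larger (or equal) $l$-value than any column to its right, we may add the pivot column to each such later column via our automorphisms to zero the post-pivot entries, without disturbing the other pivot rows (which have zero entries in the pivot column by the RREF property). The result is a matrix with exactly one $1$ in each nonzero row, located in a distinct pivot column; all non-pivot columns and null rows are zero. In this form, $f_s$ is a wedge of attaching maps $\susp^s\EO\xrightarrow{\susp^{s_i}\alpha^{(l_i)}}M_i$ for pivot indices $i$, together with zero maps on non-pivot summands of $N_s$ and on null source components. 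Their cofibers are $\EO\sm\susp^{s_i}X_{l_i+1}$ (well-defined because $l_i+1\leq p$), the unchanged $M_i$, and extra wedge summands $\susp^{s+1}\EO$, all algebraic. Hence $C(f_s)$, and therefore $C(f)$, is algebraic.

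The main technical step is constructing the quotient maps $q_{i,i'}$ and checking that they carry the top-cell attaching class in $M_i$ to that in $M_{i'}$; this amounts to observing that the cellular quotient identifies the top cells of $X_{l_i+1}$ and $X_{l_{i'}+1}$. The remainder of the proof is Gaussian elimination set up so that only the ``decreasing-$l$'' direction of column operations is needed.
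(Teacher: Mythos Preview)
Your argument is correct and uses the same key ingredient as the paper: the collapse maps $X_l\to\susp^{2n}X_{l-1}$ send $\alpha^{(l)}\mapsto\alpha^{(l-1)}$, so one can build (unipotent) automorphisms of the target realizing the needed column operations. The paper packages this as the separate \Cref{filtered-automorphisms-sums-of-X_l} and then, rather than doing a global Gaussian elimination, attaches one cell at a time: pick a coefficient $a_1\neq 0$ with $l_1$ maximal, produce a single automorphism $\phi$ of $M$ with $\phi_*^{-1}(\alpha^{(l_1)})=f_*$, and read off $C(f)\simeq \EO\sm\susp^{s_1}X_{l_1+1}\vee(\text{rest})$. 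For several cells it then filters $C(f)$ by attaching them one by one and appeals to \Cref{algebraic-EOmod-closed-under-union} to pass to the colimit. Your matrix reduction is an equivalent and arguably cleaner organization of the same idea in the finite case, and avoids invoking the union lemma there.

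One small gap to flag: when $S$ or $T$ is infinite, your RREF step and the subsequent column sweeps compose infinitely many elementary automorphisms, and you have not argued that the composite is a well-defined $\EO$-module automorphism (maps out of an infinite wedge are row-finite, so not every formal infinite product makes sense). This is exactly what the paper's one-cell-at-a-time induction plus \Cref{algebraic-EOmod-closed-under-union} is buying. You can patch this either by observing that your operations are locally finite (each row is finite, pivots lie in distinct columns, so every summand of source and target is altered only finitely often) or, more simply, by reducing to finitely many cells at a time and quoting the same union lemma.
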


\begin{proof}
First suppose we are only attaching one cell along a map $\susp^s\EO\to M$.
By assumption, $f_*\in \EO_{s}M$ is some linear combination $\sum_{i\in U}a_{i}\susp^{s_i}\alpha^{(l_i)}$. If all $a_i$ are zero, then $C(f)\simeq M\vee \susp^{s+1}\EO$ is algebraic. Otherwise, suppose that $a_1\neq 0$  and that for all $i\in U$ such that $a_i\neq 0$,  $l_1\geq l_i$. By \Cref{filtered-automorphisms-sums-of-X_l} there is an automorphism $\phi$ of $M$ such that $\phi_*^{-1}(\alpha^{(l_1)})=f_*$ and $\phi^{-1}_*(\alpha^{(l_i)})=\alpha^{(l_i)}$ for $i>1$.  Then $\phi\circ f =\susp^{2ns_1} \alpha^{(l_1)}$ so that
\[C(f)\simeq C(\phi\circ f ) = \EO\sm \susp^{s_1} X_{l_1+1} \vee \bigvee_{i\in T\setminus\{1\}}\susp^{s_i} X_{l_i}.\]
We conclude that $C(f)$ is an algebraic $\EO$-module.

Now consider the case where we are attaching multiple cells, say $f\colon \bigwsum_{S} \susp^{s_i}\EO\to M$. Filter $C(f)$ by picking a total order on $S$ and letting $f_i$ be the restriction of $f$ to $\bigwsum_{j\leq i} \susp^{s_j}\EO$. Then let $N_i = C(f_i)$. Note that $N_i$ is the cofiber of the composite $\susp^{s_i}\EO\to \bigwsum_{S} \susp^{s_i}\EO\to M \to N_{i-1}$, and this composite satisfies the hypotheses of the lemma so $N_i$ is an algebraic $\EO$-module for each $i$. Since $C(f)=\hocolim_i N_i$, by \Cref{algebraic-EOmod-closed-under-union} $C(f)$ is algebraic.
\end{proof}

To finish the proof of \Cref{cone-alphas-algebraic} we need the following lemma:
\begin{lem}
\label{filtered-automorphisms-sums-of-X_l}
Fix $d$ an integer and let
\[X= \bigvee_{i\in S} \susp^{d-2nl_i} X_{l_i}\]
where $l_i<p$ for all $i\in S$.
Let $U=\EO_{d-1}(X)=\Fp\{\alpha^{(l_i)}\}_{i\in S'}$.
Filter $U$ by $U_{l} = \Fp\Set{\alpha^{l_i} \given l_i<l}$ and suppose that $f\colon \EO_{d-1}(X)\to \EO_{d-1}(X)$ is a filtered endomorphism.
Then there is an endomorphism $\ftwee\colon X\to X$ that induces $f$ on $\EO_{d-1}(X)$.
\end{lem}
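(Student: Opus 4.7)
The plan is to assemble $\ftwee$ from a collection of component maps, one for each pair $(i,j) \in S \times S$ with $l_j \le l_i$. Using the filtration hypothesis, I would first expand $f(\alpha^{(l_i)}) = \sum_{j : l_j \le l_i} c_{ij}\alpha^{(l_j)}$ with $c_{ij} \in \Fp$. It then suffices to produce, for each such pair, a map $\phi_{ij}\colon \susp^{d - 2nl_i} X_{l_i} \to \susp^{d - 2nl_j} X_{l_j}$ sending $\alpha^{(l_i)}$ to $\alpha^{(l_j)}$ on $\EO_{d-1}$, and then to set $\ftwee = \sum_{i,j} c_{ij}\, \iota_j \circ \phi_{ij} \circ \pi_i$, where $\pi_i$ and $\iota_j$ are the projections onto and inclusions of the respective wedge summands of $X$.

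For pairs with $l_i = l_j$ the source and target summands agree up to the canonical identification, and I take $\phi_{ij}$ to be the identity (rescaled by $c_{ij}$). For pairs with $l_j < l_i$, I would use the collapse map arising from the cofiber sequence
\[X_{l_i - l_j}\hookrightarrow X_{l_i} \xrightarrow{\ q\ } \susp^{2n(l_i - l_j)} X_{l_j},\]
where the inclusion is of the bottom $(l_i - l_j)$ cells (available because each $X_l$ is built by iteratively coning off $\alpha_1$-maps) and whose cofiber is $\susp^{2n(l_i - l_j)} X_{l_j}$ since the residual attaching maps on the top $l_j$ cells of $X_{l_i}$ are again copies of $\alpha_1$. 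I take $\phi_{ij}$ to be $q$, suitably suspended.

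The main obstacle is checking that $q_*(\alpha^{(l_i)}) = \alpha^{(l_j)}$ on $\EO_{d-1}$; this is where the hypothesis $l_i < p$ for all $i \in S$ is essential. For this, consider the composite $S^{2nl_i - 1}\xrightarrow{\alpha^{(l_i)}} X_{l_i}\xrightarrow{\ q\ }\susp^{2n(l_i-l_j)} X_{l_j}$, regarded after a shift as a class in $\pi_{2nl_j - 1}(X_{l_j})$. Post-composing with the top-cell projection $\susp^{2n(l_i - l_j)}X_{l_j}\to S^{2nl_i - 2n}$ agrees with post-composing $\alpha^{(l_i)}$ with the top-cell projection $X_{l_i}\to S^{2nl_i - 2n}$ of $X_{l_i}$, since the top cell of $X_{l_i}$ survives the quotient by the bottom $l_i - l_j$ cells; this composite equals $\alpha_1$ by definition of $\alpha^{(l_i)}$. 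By \Cref{Hurewicz-image-Xl}, applied at $l_j < p$, we have $\HI_{2nl_j - 1}(X_{l_j}) = \Fp\{\alpha^{(l_j)}\}$ and the top-cell projection restricts to an isomorphism onto $\Fp\{\alpha_1\}$; so the class $q_*(\alpha^{(l_i)})$, which lies in the Hurewicz image and projects to $\alpha_1$, must equal $\alpha^{(l_j)}$. Thus the assembled $\ftwee$ induces $f$ on $\EO_{d-1}$, as required.
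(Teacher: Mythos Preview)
Your argument is correct and follows the same strategy as the paper: build $\ftwee$ as a block upper-triangular matrix whose off-diagonal blocks are collapse maps and whose diagonal blocks are scalar multiplications. The only differences are cosmetic. The paper uses the one-step collapse $p\colon X_l\to\susp^{2n}X_{l-1}$ and verifies $p_*(\alpha^{(l)})=\alpha^{(l-1)}$ directly from the commuting square of cofiber sequences defining $X_{l+1}$ and $\susp^{2n}X_l$; it then implicitly iterates to reach any $l_j<l_i$. You instead collapse several cells at once via $X_{l_i}\to\susp^{2n(l_i-l_j)}X_{l_j}$ and invoke \Cref{Hurewicz-image-Xl} to identify $q_*(\alpha^{(l_i)})$ by its image under the top-cell projection. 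Both verifications are valid; yours makes explicit where the hypothesis $l_i<p$ is used (so that \Cref{Hurewicz-image-Xl} applies to $X_{l_j}$), while the paper's diagram chase is slightly more self-contained.
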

\begin{proof}
The collapse map $p\colon X_l \to \susp^{2n} X_{l-1}$ sends $\alpha^{(l)}\mapsto \alpha^{(l-1)}$. This follows from the commutativity of the following diagram, where the columns are cofiber sequences:
\[\begin{tikzcd}[column sep=-1.5em]
&S^{2n(l+1)-1}\dlar["\alpha^{(l)}"']\drar["\alpha^{(l-1)}"] &\\
X_l \ar[rr,"p"]\dar &&\susp^{2n} X_{l-1}\dar\\
X_{l+1} \ar[rr,"p"] && \susp^{2n} X_l
\end{tikzcd}\]
The multiplication by $c$ map $c\colon X_l\to X_l$ sends $\alpha^{(l)}\mapsto c\alpha^{(l)}$. If $X =\bigvee_{l=1}^{p-1}\susp^{d - 2nl}\bigvee_{S_l} X_{l}$, any block upper triangular matrix of integers represents an endomorphism of $X$, and this endomorphism of $X$ induces the corresponding filtered endomorphism of $\EO_{d-1}(X)$.
\end{proof}

\begin{restatethis}{thm}{2n-sparse-algebraic-EO-theory}
Let $Z$ be a connective $(2p-2)$-sparse spectrum. Then $Z$ has algebraic $\EO$ theory.
\end{restatethis}

\def\fbar{\overline f}
\def\ftilde{\widetilde{f}}
\begin{proof}
By \Cref{algebraic-EOmod-closed-under-union} we may argue by cellular induction on $Z$. If $Z$ has one cell, the statement is immediate. Suppose that $Y$ is a connective $2n$-sparse spectrum with cells in dimension less than or equal to $2nk$.  Suppose also that $Y$ has algebraic $\EO$ theory, say $\EO\sm Y\simeq  \EO\sm\bigvee_{i\in T} \susp^{s_i} X_{l_i}$ and that $Z$ is the cofiber of $f\colon \bigwsum_{S} S^{2nk-1}\to Y$. It follows that $\EO\sm Z$ is the cofiber of $\EO\sm f$.

By \Cref{HI-subspace}, $\HI_{2nk-1}(Y)$ is a subspace of $\HI_{2nk-1}(Y_{2n(k-1)})\cong \bigoplus_{T'}\Fp[\susp^{2nk}\alpha]$ where $T'\subseteq T$ is the subset of $i\in T$ such that $s_i+2nl_i=2nk$. Under the isomorphism $\pi_*(\EO\sm Y) \cong \pi_*\left(\EO\sm \bigvee_{i=1}^d \susp^{s_i} X_{l_i}\right)$, the module $\HI_{2nk-1}(Y)$ is the subspace of classes of $\Fp\{\susp^{s_i}\alpha^{(l_i)}\}_{i\in S}$ where $s_i+2nl_i=2nk$, and such that the element $\alpha_1[x]\in \AHSS(Y)$ that maps to the element of $\AHSS(\EO\sm Y)$ that detects $\alpha^{(l_i)}$ is a permanent cycle. By \Cref{cone-alphas-algebraic}, it follows that $C(f)$ has algebraic $\EO$ theory.
\end{proof}

\begin{restatethis}{thm}{2p-sparse-algebraic-EO-theory}
Suppose that $M$ is a $2p$-sparse cellular $\EO$-module. Then $M$ is algebraic. In fact, $M\simeq \bigwsum \susp^{s_i} \EO$. If $Z$ is a $2p$-sparse connective spectrum, then $Z$ has algebraic $\EO$ theory.
\end{restatethis}

\begin{proof}
It suffices to show that $\EO_{2pk-1}=0$ for all $k$. The $0$-line of $\HFPSS(\EO)$ consists of the fixed points of the action of $G$ on $E_*$. Since $E_*$ is even, so is the zero line. We need to show that there are no elements in positive filtration in the $2pk-1$ stem on the $E_{\infty}$ page of $\HFPSS(\EO)$. Recall that the $E_2$ page of $\HFPSS(\EO)$ is isomorphic to $\Fp[\alpha,\beta,v^{pm}]$ in positive filtration and that the elements of the $E_2$ page representing nonzero odd degree homotopy elements are all of the form $\alpha\beta^i v^{j}$ where $i<p-1$. These are in degrees $\deg{\alpha}=2p-3\equiv -3\pmod{2p}$, $\deg{\beta}=2pn-2\equiv -2\pmod{2p}$ and $\deg{u}=2pn^2\equiv 0\pmod{2p}$. Now $\deg{\alpha\beta^i}\equiv -3-2i\pmod{2p}$ so if $\deg{\alpha\beta^i}\equiv -1\pmod{2p}$ then $-3-2i\equiv -1\pmod{2p}$ implies that $2i\equiv -2 \pmod{2p}$ so $i=(p-1)+pk$ but then $\alpha\beta^i=0\in \EO_*$.
\end{proof}

\begin{prop}
\label{EO-splitting-small-range}
Let $X$ be a torsion free connective spectrum with cells in degrees between $k$ and $k+2pn-2$. Suppose that $M$ is a retract of $\EO\sm X$. Then $M$ has algebraic $\EO$ theory.
\end{prop}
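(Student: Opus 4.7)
The plan is to first show that $\EO\sm X$ is algebraic as an $\EO$-module, and then to deduce that the retract $M$ inherits algebraicity.

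To show that $\EO\sm X$ is algebraic, I would follow the template of \Cref{2n-sparse-algebraic-EO-theory}, inducting on the cell structure of $X$ and using \Cref{algebraic-EOmod-closed-under-union} to pass to the homotopy colimit. The base case of a single cell is immediate. For the inductive step, write $X = Y\cup_f e^d$ with $Y$ satisfying the same hypotheses, so by the inductive hypothesis $\EO\sm Y\simeq \EO\sm\bigwsum_i\Sigma^{s_i}X_{l_i}$. The attaching map factors through some skeleton $Y^{(b)}$, and the width hypothesis forces $d-b\le 2pn-2=\deg{\beta}$. In particular the stem of the relevant class in $\pi_{d-1}(\EO\sm Y)$ is at most $\deg{\beta}-1$, which rules out any $\beta$-multiple attaching map (these would need a differential of length at least $2pn-1$). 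The remaining candidate attaching classes are $\alpha$-multiples of cells of $Y$, captured by the $\alpha^{(l_i)}$ of \Cref{Hurewicz-image-Xl}, after using the relation $d_{2n+1}(u)=\alpha\beta^n$ in $\HFPSS(\EO)$ to reduce any higher $\alpha^j$-contributions to iterated $\alpha$-attachments against the already filtered structure of $\EO\sm Y$. Applying \Cref{cone-alphas-algebraic} then gives $\EO\sm X$ as algebraic.

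Having established $\EO\sm X\simeq\EO\sm\bigwsum_i\Sigma^{s_i}X_{l_i}$, the retract $M$ gives a $\KCp$-summand decomposition $\EEO_*(M)/\mfrak$ of $\bigoplus_i\Sigma^{\bar s_i}V_{l_i}$. Since the $V_l$ are indecomposable as $\KCp$-modules, this summand has the form $\bigoplus_{j\in T}\Sigma^{\bar s_j}V_{l_j}$ for a subset $T$ of indices. Then \Cref{splitting-from-E-theory} converts this algebraic data into a splitting $M\simeq\EO\sm\bigwsum_{j\in T}\Sigma^{s_j}X_{l_j}$, with the integer shifts $s_j$ inherited from the ambient wedge.

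The main obstacle is the bookkeeping required to rule out higher $\alpha^j$-attaching maps for $j\ge 2$, which a priori fit within the stem bound $\deg{\beta}-1$. The precise bound $2pn-2$ eliminates $\beta$-attachings directly, but the $\alpha^j$-contributions must be controlled by a careful HFPSS argument that leverages the relations $d_{2n+1}(u)=\alpha\beta^n$ and $d_{2n^2+1}(\alpha u^n)=\beta^{n^2+1}$ in $\pi_*(\EO)$ to either force vanishing or reduce to iterated $\alpha$-multiplications already subsumed by the inductive algebraicity of $\EO\sm Y$.
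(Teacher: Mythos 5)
Your overall shape (a cellular induction using degree bounds to force $\alpha$-attaching maps, then reduce to \Cref{cone-alphas-algebraic}) is close to the paper's, but there are two real gaps, and one red herring worth clearing up.

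\textbf{The red herring.} You spend the final paragraph worrying about $\alpha^j$-attaching maps for $j\ge 2$ and proposing an HFPSS argument to control them. There is nothing to control: $\alpha$ has odd degree $2n-1$ and $p$ is odd, so $\alpha^2=0$ in $\EO_*$. No such classes exist.

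\textbf{Gap 1: positive-filtration detection.} Your degree count correctly rules out $\beta$, but it does not by itself rule out attaching maps detected in $\HFPSS$ filtration $0$, i.e.\ classes of the form $e[x]$ with $e$ a nonzero element of the image of $\EO_*\to E_*$. Such attaching maps exist for general cellular $\EO$-modules and are fatal to algebraicity (e.g.\ the cone on $p\colon \EO\to\EO$ is $\EO/p$, which is not algebraic). The paper's engine is \Cref{EO-splitting-small-range-2}, whose extra hypothesis is precisely that each attaching map vanishes after $E\sm_{\EO}-$. Once that is known, the detecting class $\theta\in\EO_*$ lies in $\ker(\EO_*\to E_*)$ with $0<\deg\theta\le 2pn-2$, and the only such nonzero class is $\alpha$, so \Cref{cone-alphas-algebraic} applies. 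For $\EO\sm X$ the $E$-triviality of the attaching maps is where the torsion-free hypothesis is used: $E\sm X^{(i)}$ splits as a wedge of $E$'s, so $E\sm X^{(i)}\to E\sm X^{(i+1)}$ is split injective. Your proposal never states or uses this.

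\textbf{Gap 2: passing to the retract.} You invoke \Cref{splitting-from-E-theory} to split the retract $M$, but that corollary has as a hypothesis that $M$ is \emph{already} algebraic; using it here is circular. The paper does not pass through ``closure under retracts'' at all. Instead it observes that the cellular filtration of $\EO\sm X$ induces one on $M$ by taking the retracts $M_i$ of $\EO\sm X^{(i)}$; since $E\sm_{\EO}M_i$ is a retract of the split $E\sm X^{(i)}$, the attaching maps for $M$ are again $E$-trivial, and the cells of $M$ still lie in the range $[k,\,k+2pn-2]$. So $M$ itself satisfies the hypotheses of \Cref{EO-splitting-small-range-2}, and the same argument runs for $M$ directly with no appeal to $\EO\sm X$ being algebraic first.
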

In order to prove this, we use the following lemma, which has messier hypotheses:
\begin{lem}
\label{EO-splitting-small-range-2}
Suppose that $M$ is a cellular $\EO$-module, say $M=\hocolim M_i$ and $M_{i+1}$ is the cone of some map $f_i\colon\bigwsum_{j\in S_i}\susp^{s_{ij}}\EO \to M_i$ where $s_{ij}\in \Z/2p^2n^2$. Suppose that $E\sm_{\EO}f\simeq 0$. Suppose also that there are integral lifts $\widetilde{s}_{ij}$ such that for some $k\in \Z$ and for all $i$, $j\in S_i$ we have $k\leq \widetilde{s}_{ij}\leq k+2pn-2$. Then $M$ is an algebraic $\EO$-module.
\end{lem}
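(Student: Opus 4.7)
The plan is to induct on the cellular filtration, showing each $M_i$ is algebraic, and then apply \Cref{algebraic-EOmod-closed-under-union} to conclude $M = \hocolim M_i$ is algebraic. The base case $M_0 \simeq \bigwsum_j \susp^{s_{0j}} \EO$ is trivially algebraic, since each summand is a shifted $\EO \sm X_1$.

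For the inductive step, write $M_i \simeq \EO \sm \bigwsum_\beta \susp^{t_\beta} X_{l_\beta}$ and consider $M_{i+1} = C(f_i)$, where $f_i \colon \bigwsum_j \susp^{s_{ij}} \EO \to M_i$. The hypothesis $E \smEO f_i \simeq 0$ places each component of $f_i$ in the kernel of $\pi_{s_{ij}}(M_i) \to \pi_{s_{ij}}(E \smEO M_i)$, hence in positive filtration of $\HFPSS(M_i)$. My plan is to show that, after composing with an appropriate automorphism of $M_i$ furnished by \Cref{filtered-automorphisms-sums-of-X_l}, each component of $f_i$ becomes a scalar multiple of a Hurewicz image class $\alpha^{(l)}$ with $l < p$; then \Cref{cone-alphas-algebraic} immediately gives that $M_{i+1}$ is algebraic.

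To analyze positive-filtration classes, decompose $\pi_{s_{ij}}(M_i) = \bigoplus_\beta \pi_{s_{ij} - t_\beta}(\EO \sm X_{l_\beta})$. Using the description of $\HFPSS(\EO)$ in \Cref{subsec:review-of-EO*}, positive-filtration classes are built from $\alpha \in (2n-1, 1)$, $\beta \in (2pn-2, 2)$, and $u \in (2pn^2, 0)$ acting on the algebraic generators. By \Cref{Hurewicz-image-Xl}, the only odd-stem Hurewicz image classes in $\EO_*(X_l)$ for $l < p$ are the $\alpha^{(l)}$ in stem $2nl - 1$; together with the integer range hypothesis $s_{ij}, t_\beta \in [k, k + 2pn - 2]$, which forces $|s_{ij} - t_\beta| \leq 2pn - 2 = |\beta|$, this severely restricts which positive-filtration classes can be attaching maps. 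An $\alpha$-multiple $\alpha \cdot [x]$ into an $X_l$-summand sits in precisely the right stem to be recognized as a scalar multiple of $\alpha^{(l)}$; a $\beta$-multiple (jumping the stem by $|\beta|$) can only arise from the bottom cell of a summand whose $t_\beta$ equals $k$, mapping into a cell in the extreme top stem $k + 2pn - 2$.

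The main obstacle will be ruling out the boundary-case $\beta$-multiple attachments. The strategy is to order the pieces of $f_i$ by source stem, attaching cells of lowest stem first, and to argue inductively that a genuine $\beta$-type component would either produce a cell outside the allowed integral range $[k, k + 2pn - 1]$ at the next step (contradicting the hypothesis) or could be absorbed by a filtered automorphism that reorganizes the bottom $X_{l_\beta}$-summand and the new cell into a larger algebraic piece. Once the $\beta$-components are eliminated, every surviving component of $f_i$ is an $\alpha^{(l)}$-multiple with $l < p$, and \Cref{cone-alphas-algebraic} closes the inductive step.
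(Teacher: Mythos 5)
Your argument follows the paper's overall strategy --- cellular induction reducing to \Cref{cone-alphas-algebraic}, via \Cref{algebraic-EOmod-closed-under-union} --- and you correctly identify that the crux is controlling which positive-filtration classes can detect an attaching map. But both of your proposed resolutions of the boundary $\beta$-case fail. Option (a), that a $\beta$-attachment would create a cell outside the allowed range: if $s_{ij} = k + 2pn - 2$ and the target cell is the bottom one at degree $k$, the new cell lands at $k + 2pn - 1$, which is \emph{inside} the range you yourself write down, so there is no contradiction. Option (b), absorbing a $\beta$-attachment by a filtered automorphism: such an automorphism of $M_i$ preserves the Atiyah--Hirzebruch filtration, so it sends a class detected by $\beta[x]$ to another class detected by a $\beta$-type element, never to an element of the Hurewicz-image span $\Fp\{\susp^{s_t}\alpha^{(l_t)}\}$ that \Cref{cone-alphas-algebraic} requires. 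Concretely, the cofiber of $\beta\cdot[x_0]\colon\susp^{2pn-2}\EO\to\EO\sm X_2$ is genuinely non-algebraic, and no automorphism of $\EO\sm X_2$ can change that.

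The paper sidesteps $\beta$ entirely by a bare degree count, which is what you were missing: a detector $\theta[x]$ of $f_{ij}$ has $|\theta| = s_{ij} - |x|$ with $\theta$ in positive degree lying in $\ker(\EO_* \to E_*)$, and the width hypothesis together with the correct accounting between cell degrees and attaching-map degrees (in the application to \Cref{EO-splitting-small-range}, the \emph{cells} of $X$ span a window of width $2pn-2$, so attaching-map sources span a window of width $2pn-3$) forces $|\theta| < 2pn - 2 = |\beta|$. In that strictly smaller range, $\alpha$ is the only class in the kernel, so every $f_{ij}$ already lies in the required Hurewicz-image span and \Cref{cone-alphas-algebraic} applies directly. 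No ordering of cells, inductive absorption, or automorphism step is needed; the one place you introduce extra machinery is precisely the place where it would not work.
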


\begin{proof}
We argue by cellular induction. Suppose that $M_i$ is an algebraic $\EO$-module, say $M_i\simeq \EO\sm\bigvee_{t\in T} \susp^{s_t} X_{l_t}$. We need to show that the cone of $f_i\colon\bigwsum_{j\in S_i}\susp^{s_{ij}}\EO \to M_i$ is algebraic. It suffices by \Cref{cone-alphas-algebraic} to show that the homotopy of each component map $f_{ij}\colon\susp^{s_{ij}}\EO\to M_i$ lies in the subgroup of $\pi_{s_{ij}}(M)$ generated by $\Fp\{\susp^{s_t}\alpha^{(l_t)}\}_{t\in T'}$ where $T'\subseteq T$ is the subset of $t\in T$ such that $s_t+2n(l_t-1)=s_{ij}$ and $l_t<p$. By assumption, $f_{ij}$ is detected in $\AHSS(M_i)$ by some element $\theta[x]$ where $\theta\in \EO_*$ and $x$ is some cell of $M_i$ such that the image of $\theta$ under $\EO\to E$ is zero. We also know that $|x| + |\theta| = s_{ij}$ and that $k\leq |x|, s_{ij}\leq 2pn-2$. It follows that $0\leq |\theta|\leq 2pn-2$. The only element of the kernel of $\EO_*\to E_*$ in these degrees is $\alpha$. We deduce that $f_{ij}$ is detected in the necessary subspace. By \Cref{cone-alphas-algebraic}, $M_{i+1}$ is algebraic.
\end{proof}

\begin{proof}[Proof of \Cref{EO-splitting-small-range}]
First note that $\EO\sm X$ satisfies the hypotheses of \Cref{EO-splitting-small-range-2} -- $\EO\sm X$ has a cellular filtration where each cell is in dimension between $k$ and $k+2pn-2$. Because $X$ is torsion free, $E\sm X^{(i)}$ splits as a sum of $E$-theories, so each attaching map $f_i\colon \bigwsum_{S_i}\susp^{i-1}\EO \to \EO\sm X^{(i-1)} \to \EO\sm X^{(i)}$ must be zero on $E$-theory.

Now let $M$ be a retract of $\EO\sm X$ and let $M_i$ be the corresponding retract of $\EO\sm X^{(i)}$. Since $E\sm X^{(i)}$ splits as a sum of $E$-theories, and $E\sm_{\EO}M_i$ is a retract of $E\sm X^{(i)}$, it follows that $E\sm_{\EO}M_i$ does too. Thus, the attaching maps to form $M_{i}$ from $M_{i-1}$ must be in the kernel of $\pi_*(M_{i-1})\to \EEO_*(M_{i-1})$. The dimensions of the cells of $M$ are still in the range from $k$ to $k+2pn-2$, so $M$ satisfies the hypotheses of \Cref{EO-splitting-small-range-2} as well and $M$ is an algebraic $\EO$-module.
\end{proof}

\subsection{Orientations}
\label{subsec:orientations}
Here we give proofs of the theorems quoted from this section in the introduction. At this point these arguments are straightforward.

\begin{lem}
\label{alg-EO-from-homology-impl-Chern-determined}
Let $Z$ be a space and suppose that every spectrum $Y$ with $\HFp_*(Y)$ isomorphic to $\HFp_*(Z)$ as $P(1)_*$-comodules has algebraic $\EO$-theory. Then $\EO$-orientability of complex bundles over $Z$ is Chern determined.
\end{lem}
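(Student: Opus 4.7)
The plan is to prove that under the lemma's hypothesis, any complex bundle $V$ over $Z$ with $\psi_{p-1}(V) \equiv 0 \pmod p$ admits a unital orientation $\Mf \to \EO$. The argument has three parts: (i) translate the Chern condition into a $P(1)_*$-comodule identification of $\HFp_*(\Mf)$ with $\HFp_*(Z)$, (ii) invoke the hypothesis to conclude that $\Mf$ has algebraic $\EO$-theory, and (iii) extract the orientation from the resulting splitting.

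For step (i), recall from the introduction that the Thom class $u \in \HFp^0(\Mf)$ satisfies $P^1(u) = \overline\psi_{p-1}(V) \cdot u$, so the Chern congruence is equivalent to $P^1(u) = 0$. The Cartan formula $P^1(u \cdot \pi^*x) = P^1(u) \cdot \pi^* x + u \cdot \pi^*(P^1 x)$ combined with the Thom isomorphism then identifies the $P^1$-action on $\HFp^*(\Mf)$ with the $P^1$-action on $\HFp^*(Z)$ via the Thom class. Dualizing gives an isomorphism $\HFp_*(\Mf) \cong \HFp_*(Z)$ of $P(1)_*$-comodules (after absorbing the Thom shift into a suspension). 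The hypothesis of the lemma then implies $\Mf$ has algebraic $\EO$-theory, and \Cref{splitting-from-HFp} produces a splitting $\EO \sm \Mf \simeq \EO \sm \bigwsum \susp^{s_i} X_{l_i}$ reflecting the decomposition of $\HFp_*(\Mf)$ into indecomposable $P(1)_*$-comodules $W_{l_i}$.

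For step (iii), I would observe that the Thom class generates a $W_1$ summand of $\HFp_*(\Mf)$: since $\Mf$ is connective, $u$ sits in the lowest (zeroth) cohomological degree, and in any $W_l$ with $l > 1$ the lowest-degree cohomology class has nonzero $P^1$ (as follows by duality from the formula $\Psi(x_k) = \xi_1 \otimes x_{k-1} + \cdots$ of \Cref{X_l-unique}). Hence the splitting includes an $\EO \sm X_1 = \EO$ summand in degree $0$. Projecting $\EO \sm \Mf$ onto this summand and applying the smash-product adjunction produces a spectrum map $\Mf \to \EO$. To verify unitality, smashing the composite $S^0 \to \Mf \to \EO$ with $\EO$ identifies it with the composite $\EO \hookrightarrow \EO \sm \Mf \twoheadrightarrow \EO$ consisting of the inclusion of the distinguished $X_1$-summand followed by the projection onto it, which is the identity on $\EO$.

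I expect the main obstacle to be the bookkeeping in step (iii) — specifically, tracking shifts and verifying that the $W_1$ summand generated by $u$ is the one matched by the unit $S^0 \to \Mf$, so that the projection really does give a unital map. This requires some care with the naturality of the splitting in \Cref{splitting-from-HFp} with respect to the Thom class. The converse direction of Chern determination (that $\EO$-orientable bundles satisfy $\psi_{p-1}(V) \equiv 0 \pmod p$) is already established in the introduction as a necessary condition and needs no further argument.
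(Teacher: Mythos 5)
Your proof is correct and follows the same route as the paper: translate $\psi_{p-1}(V)\equiv 0\pmod p$ into a $P(1)_*$-comodule isomorphism $\HFp_*(\Th(V))\cong\HFp_*(\susp^\infty_+Z)$, invoke the hypothesis to get algebraic $\EO$-theory, and apply \Cref{splitting-from-HFp} (the paper instead cites its corollary, \Cref{B-iso-implies-EO-equiv}). Your step (iii) makes explicit the extraction of a unital orientation from the splitting, which the paper compresses into "so $V$ is orientable"; this is a welcome expansion rather than a different argument.
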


\begin{proof}
Suppose that $V$ is a complex bundle over $Z$ with $\psi^{p-1}(V)=0\pmod{p}$. We need to show that $V$ is $\EO$-orientable. Since $\psi^{p-1}(V)=0\pmod{p}$, $\HFp_*(\Th(V))\cong \HFp_*(Z)$ as $P(1)_*$-comodules. By hypothesis, this implies that both $\Th(V)$ and $Z$ have algebraic $\EO$-theory and by \Cref{B-iso-implies-EO-equiv} $EO\sm Z$ and $\EO\sm \Th(V)$ are homotopy equivalent so $V$ is orientable.
\end{proof}

\begin{restatethis}{thm}{EO-orientation-2n-sparse}
Let $Z$ be a $(2p-2)$-sparse space. Then $\EO$-orientability of complex bundles over $Z$ is Chern-determined. In particular, let $\psi_{p-1}$ be the $(p-1)$st power sum polynomial over $Z$. Then a complex vector bundle $\xi$ over $Z$ is $\EO$-orientable if and only if $\psi_{p-1}\equiv 0\pmod p$.
\end{restatethis}

\begin{proof}
\Cref{2n-sparse-algebraic-EO-theory} says that the hypothesis of \Cref{alg-EO-from-homology-impl-Chern-determined} is satisfied.
\end{proof}

\begin{restatethis}{prop}{sparse-connective-implies-orientable}
Let $Z$ be a $(2p-2)$-sparse $2p$-connective space. Then every map $Z\to\BGLS$ is $\EO$-orientable.
\end{restatethis}

\begin{proof}
$Y_{4p-4}$ is $(2p-2)$-sparse so both $\Th(f)$ and $\susp^{\infty}_+Y_{4p-4}$ are $(2p-2)$-sparse. By \Cref{2n-sparse-algebraic-EO-theory} we deduce that they have algebraic $\EO$-theory. If $u$ is the Thom class in $\HFp^*(\Th(f))$ then for connectivity reasons, $P^1(u)=0$ so the Thom isomorphism respects the $P(1)^*$-module structure. The theorem follows by \Cref{B-iso-implies-EO-equiv}.
\end{proof}
We immediately deduce:
\restate{4p-4_orientation}

\subsection{Free \texorpdfstring{$E_*[C_p]$}{E\_*[Cp]} summands of \texorpdfstring{$\EEO_*(M)$}{E\string^{EO}\_*(M)} lift to summands of \texorpdfstring{$M$}{M}}
\label{subsec:frees-split}
We can split off copies of $\EO\sm X_p$ from an $\EO$-module $M$ without assuming that $M$ is sparse or induced:
\begin{restatethis}{prop}{EO-splitting-free}
Suppose that $M$ is an $\EO$-module such that $\EEO_*(M)$ is a projective $E_*$-module. The map
\[\pi_0 \EOmod(\EO\sm X_p, M) \to \Hom_{\EEO_*E}(\E_*(X_p),\EEO_*(M))\]
given by $f\mapsto \pi_0(E\sm_{\EO}f)$ is an isomorphism, natural in $M$. If $M$ is a finite module, the map
\[\pi_0 \EOmod(M,\EO\sm X_p) \to \Hom_{\EEO_*E}(\EEO_*(M),\E_*(X_p))\]
given by $f\mapsto \pi_0(E\sm_{\EO}f)$ is an isomorphism, natural in $M$. If $M$ is a cellular $\EO$-module and $\EEO_*(M) \cong E_*(\susp^s X_p)\oplus V'$ where $V'$ is some $E_*[G]$-module, then $M\simeq \EO\sm \susp^s X_p \vee M'$ for some $\EO$-module $M'$ with $\EEO_*(M')=V'$ as $E_*[G]$-modules.
\end{restatethis}

\begin{proof}
There is a relative Adams spectral sequence
\[\Ext_{\EEO_*E}(E_*,\EEO_* (DX_p\sm M))\Rightarrow \pi_*(DX_p\sm M) = \pi_*\EOmod(\EO\sm X_p,M).\]
Because $E_*(X_p)$ is $E_*$-free, there is a Kunneth isomorphism
\[\EEO_*(DX_p\sm M)\cong E_*(DX_p) \otimes_{E_*} \EEO_*(M).\]
The $\EEO_*E$ coaction on $E_*(DX_p)$ is free, so we see that $\EEO_*(DX_p\sm M)$ has a free coaction too. It follows that the spectral sequence collapses on the $E_2$ page and \[\pi_*\EO(\EO\sm X_p,M)\to \Hom_{\EEO_*E}(E_*,E_*(DX_p) \otimes_{E_*} \EEO_*(M))\]
is an equivalence. Because $E_*(X_p)$ is $E_*$-free, $E_*(DX_p)\cong E_*(X_p)^{\vee}$ and because $E_*(X_p)$ is a finite dimensional free $E_*$-module,
\[\Hom_{\EEO_*E}(E_*,E_*(DX_p) \otimes_{E_*} \EEO_*(M))\cong \Hom_{\EEO_*E}(E_*(X_p),\EEO_*(M)).\]

We know that $E \sm X_p = E \sm_{\EO} (\EO \sm X_p) = E \sm_{\EO} \EhC$ so $E_*(X_p) = \EEO_*(\EhC)$ is a summand of $\EEO_*E$ and hence is a relatively injective $\EEO_*E$-comodule.
Because $\EEO_*E$ is a cofree $\EEO_*E$-comodule, for $V$ some other $\EEO_*E$ comodule there is an isomorphism between $\EEO_*E \otimes_{E*} V$ with the standard ``diagonal'' coaction and $\EEO_*E\otimes_{E_*} V$ with coaction just on the left tensor factor, so we see that relative injectives form a tensor ideal.
Thus $E_*(DX_p) \otimes_{E_*} \EEO_*(M)$ is a relative injective and $\Ext_{\EEO_*E}(E_*,\EEO_* (DX_p\sm M))$ vanishes in positive degrees. It follows that the edge map $\pi_0 \EOmod(\EO\sm X_p, M) \to \Hom_{\EEO_*E}(\E_*(X_p),\EEO_*(M))$ is an isomorphism.

Since $\EEO_*M$ is projective as an $E_*$-module, there is also a relative Adams spectral sequence
\[\Ext_{\EEO_*E}(\EEO_*(M),E_*(X_p))\Rightarrow \pi_*\EOmod(\EEO_* (M),E_*(X_p)).\]
Because $E_*(X_p)$ is relatively injective, this is concentrated on the zero line. It follows that the Adams spectral sequence collapses and the edge map
\[\pi_0\EOmod(M,\EO\sm X_p)\to \Hom_{\EEO_*E}(\EEO_*(M),E_*(X_p))\]
is an isomorphism.

If $M$ is cellular and $\EEO_*(M) \cong E_*\left(\susp^s X_p\right)\oplus V'$ then there is a map $f:\EO\sm \susp^s X_p\to M$ inducing the inclusion of the summand $E_*\left(\susp^s X_p\right)$ on relative $E$-theory. Since $\EO\sm X_p$ is compact, $f$ factors as a map $f^{(k)}\colon \EO\sm X_p \to M^{(k)}$ where $M^{(k)}$ is any sufficiently large skeleton of $M$. Skeleta of $M$ are finite so each $f^{(k)}$ splits and these splittings can be chosen compatibly. We conclude that $f$ splits.
\end{proof}

\begin{prop}
Suppose that $Z$ is a spectrum with $E_*(Z)$ a free $E_*$-module and there is a splitting $\HFp_*(Z) = \susp^{s} W_p \oplus U$ as $P(1)_*$-comodules. Then there is a splitting $\EO\sm Z\simeq \susp^s \EO\sm X_p \vee M$ where $M$ is some $\EO$-module.
\label{frees-split}
\end{prop}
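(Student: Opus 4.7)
The aim is to reduce to \Cref{EO-splitting-free} applied to $M = \EO \sm Z$. The hypotheses to verify are: $M$ is cellular (which holds since $\EO \sm Z$ has a cellular $\EO$-module structure coming from any CW structure on $Z$); $\EEO_*(M) = E_*(Z)$ is projective over $E_*$ (immediate, since $E_*(Z)$ is $E_*$-free by assumption); and there is an $E_*[G]$-module splitting $\EEO_*(M) \cong E_*(\susp^s X_p) \oplus V'$. Thus the whole problem reduces to producing this algebraic splitting of $E_*(Z)$ as an $E_*[G]$-module.

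To build the splitting, I would translate the given $P(1)_*$-comodule splitting of $\HFp_*(Z)$ into an $E_*[G]$-module splitting of $E_*(Z)$ using the machinery of \Cref{sec-E_*X_l}. Using that $E_*(Z)$ is $E_*$-free, one may select elements $\supE{x}_0, \ldots, \supE{x}_{p-1} \in E_*(Z)$ lifting the standard generators of $\susp^s W_p \subset \HFp_*(Z)$ (this uses, as in the proof of \Cref{E*X_l/m=V_l}, that $E_*(Z)$ and $\HFp_*(Z)$ share a common $\BP_*$-basis). By \Cref{stabilizer-action-coaction} together with the computation in the proof of \Cref{E*X_l/m=V_l}, these classes satisfy
\[\zeta_*(\supE{x}_k) \equiv \supE{x}_k - v\,\supE{x}_{k-1} \pmod{\text{lower Atiyah--Hirzebruch filtration}},\]
where $v \in E_{2n}$ is a unit and $\zeta$ generates $C_p$. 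Modulo $\m$, this exhibits a free $K_*[C_p]$-summand of $K_*(Z)$ isomorphic to $K_*(\susp^s X_p)$.

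The crux is to promote the mod-$\m$ splitting to a genuine $E_*[C_p]$-module splitting. I would do this by successive approximation, relying on the fact that $E_*(X_p)$ is a free (hence projective) $E_*[C_p]$-module by \Cref{EXp-free} to iteratively correct the generators $\supE{x}_k$ by higher-filtration and higher-$\m$-order terms until the $C_p$-equivariance relations hold exactly. The resulting $C_p$-equivariant summand is then promoted to a $G$-equivariant summand by averaging over $C_{n^2} = G/C_p$; this averaging is legitimate because $|C_{n^2}| = n^2$ is coprime to $p$ and hence a unit in $E_*$. Invoking \Cref{EO-splitting-free} yields the desired topological splitting $\EO \sm Z \simeq \EO \sm \susp^s X_p \vee M$. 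The main obstacle is the successive-approximation step: it should converge because the summand $V_p$ being split off is projective over $K_*[C_p]$, but careful bookkeeping of Atiyah--Hirzebruch filtrations and $\m$-adic convergence is required.
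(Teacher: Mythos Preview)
Your reduction to \Cref{EO-splitting-free} is correct and is also the endpoint of the paper's argument, but the route you take to produce the required $E_*[G]$-module splitting of $E_*(Z)$ is genuinely different from the paper's.

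The paper does not attempt to build the $E_*[G]$-summand directly from the $\HFp_*$-generators.  Instead it passes to the finite subquotient $Z^{(s+2n^2)}_{(s)}$, observes that this lies in the ``small range'' covered by \Cref{EO-splitting-small-range}, and therefore has algebraic $\EO$-theory.  The $P(1)_*$-summand $\susp^s W_p$ survives to the subquotient, so \Cref{splitting-from-HFp} yields a \emph{topological} splitting $\EO\sm Z^{(s+2n^2)}_{(s)}\simeq \susp^s\EO\sm X_p\vee \EO\sm Z'$.  Applying $E\sm_{\EO}(-)$ then hands you an honest $E_*[G]$-module splitting of $E_*\bigl(Z^{(s+2n^2)}_{(s)}\bigr)$ for free, with no filtration bookkeeping and no averaging.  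Since $E_*(\susp^s X_p)$ is $E_*[C_p]$-free, this summand of the subquotient lifts to a summand of $E_*(Z)$, and one finishes with \Cref{EO-splitting-free} and a short Atiyah--Hirzebruch comparison to pin down the shift modulo $2p^2n^2$.

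Your approach trades the appeal to \Cref{EO-splitting-small-range} for a hands-on construction: lift generators through $\BP_*$, use \Cref{stabilizer-action-coaction} to see that the top class generates a free $K_*[C_p]$-submodule, then promote to an $E_*[C_p]$-splitting and average over $C_{n^2}$.  This is viable---the promotion step works cleanly once you use that $E_*[C_p]\cong \mathrm{Coind}_1^{C_p}E_*$, so that $\Hom_{E_*[C_p]}(E_*(Z),E_*[C_p])\cong \Hom_{E_*}(E_*(Z),E_*)$ and any $E_*$-linear functional picking out the generator yields the desired retraction---and is arguably more elementary.  What you lose is the automatic $E_*[G]$-structure: the paper gets this from a genuine $\EO$-module splitting of the subquotient, whereas you must argue separately that the averaged summand is isomorphic to $E_*(\susp^s X_p)$ as an $E_*[G]$-module (not merely as an $E_*[C_p]$-module).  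You also do not address the final shift ambiguity: \Cref{EO-splitting-free} only fixes $s$ modulo $2n^2$ from the $E_*[G]$-data, and the paper uses an Atiyah--Hirzebruch comparison to recover $s$ modulo the full periodicity $2p^2n^2$.
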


\begin{proof}
Consider the subquotient $Z^{(s+2n^2)}_{(s)}$ of $Z$. Because $2n^2\leq 2pn-n$, \Cref{EO-splitting-small-range} says that $Z^{(s+2n^2)}_{(s)}$ has algebraic $\EO$ theory. Since $\HFp_*\left(Z^{(s+2n^2)}_{(s)}\right) = \susp^s W_p \oplus U'$ there is a splitting $\EO\sm Z^{(s+2n^2)}_{(s)}\simeq \susp^k \EO\sm X_p \vee \EO\sm Z'$.
This implies that 
\[E\sm Z^{(s+2n^2)}_{(s)} \simeq E\sm_{\EO} \EO\sm Z^{(s+2n^2)}_{(s)} \simeq \susp^s E\sm X_p \oplus E\sm Z',\] 
so $E_*\left(Z^{(s+2n^2)}_{(s)}\right) \cong E_*(\susp^s X_p) \oplus E_*(X)$ as $E_*[G]$-modules.
Thus $E_*(\susp^sX_p)$ is a subquotient of $E_*(Z)$ and $E_*(\susp^sX_p)$ is a free $E_*[C_p]$-module so this subquotient is split: $E_*(Z) \cong E_*(\susp^s X_p) \oplus E_*(Z')$. By \Cref{EO-splitting-free}, there is a splitting $\EO\sm Z \simeq \EO\sm \susp^{s'}X_p\wsum M'$ where $s'\equiv s \pmod{2n^2}$ and $\EEO_*(M')\cong E_*(Z')$. By comparing Atiyah-Hirzebruch spectral sequences as in the proof of \Cref{splitting-from-HFp} we see that $s'\equiv s \pmod{2p^2n^2}$.
\end{proof}

\subsection{A formula for the smash product of algebraic \texorpdfstring{$\EO$}{EO}-modules}
\label{subsec:smash-product-of-algebraic-EO-modules}
Recall that $P(1)_* = \Fp[t]/(t^p)$ with $\Delta(t) = t\otimes 1 + 1\otimes t$, so that $P(1)_*$ is dual to the subalgebra of the Steenrod algebra generated by $P^1$. Let $W_l$ be the indecomposable $P(1)_*$-comodule of length $l$. The following lemma indicates how tensor products decompose:
\begin{lem}
\label{algebraic-tensor-decomposition}
Given $1\leq r\leq s\leq p$,
\[W_r\otimes W_s = \bigoplus_{i=c+1}^{r} \susp^{2r-2i}W_p\oplus\bigoplus_{i=1}^c \susp^{2r-2i}W_{s-r+2i-1}\]
where
\[
c=\begin{cases}
r & \text{if $r+s\leq p$}\\
p-s & \text{if $r+s\geq p$}
\end{cases}
\]
and the first sum is empty if $r+s\leq p$.
\end{lem}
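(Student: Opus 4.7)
The plan is to identify $W_l$ with $\Sym^{l-1}(V)$ for $V=W_2$ the two-dimensional comodule, and then deduce the decomposition from the classical Clebsch--Gordan formula with appropriate modular corrections.

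First I would set up notation. Let $V$ be the $P(1)_*$-comodule with basis $\{e_0,e_1\}$ in internal degrees $0$ and $2$ with coaction $\Psi(e_1)=1\otimes e_1+t\otimes e_0$. A direct calculation on the basis $\{e_0^{\,l-1-k}e_1^{\,k}\}_{k=0}^{l-1}$ of $\Sym^{l-1}(V)$ shows that $P^1$ sends $e_0^{\,l-1-k}e_1^{\,k}$ to $k\,e_0^{\,l-k}e_1^{\,k-1}$, so $\Sym^{l-1}(V)\cong W_l$ as graded $P(1)_*$-comodules for every $1\leq l\leq p$; the relevant multinomial coefficients remain invertible mod $p$ exactly in this range, which is why the identification breaks down for $l>p$.

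Next, in characteristic zero, viewing $V$ as the defining representation of $\mathfrak{sl}_2$, the classical Clebsch--Gordan decomposition yields
\[\Sym^{r-1}(V)\otimes \Sym^{s-1}(V)\cong \bigoplus_{i=1}^{r}\Sigma^{2r-2i}\Sym^{s+r-2i}(V),\]
where each shift $\Sigma^{2r-2i}$ arises from tracking the internal degree of the highest-weight vector of the $i$th summand, obtained by applying $i-1$ lowering operations to the top tensor $e_1^{\,r-1}\otimes e_1^{\,s-1}$. In the range $r+s\leq p$ each $\Sym^{s+r-2i}(V)$ has dimension at most $p$, so the characteristic zero answer transports to an isomorphism of graded $P(1)_*$-comodules, giving the $c=r$ case of the lemma directly.

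When $r+s>p$, some of the Clebsch--Gordan summands would have dimension exceeding $p$ and are no longer indecomposable $P(1)_*$-comodules. The key fact is that $W_p$ is the unique projective-injective indecomposable $P(1)_*$-comodule (because $P(1)_*$-comodules are equivalent to $\F_p[C_p]$-modules, with $W_p$ the regular representation), so indecomposable summands of dimension $>p$ must split off copies of $W_p$. The precise count is pinned down most cleanly by induction on $r$ using the Pieri-style rule $V\otimes W_s\simeq W_{s+1}\oplus\Sigma^2 W_{s-1}$ for $1\leq s\leq p-1$ together with $V\otimes W_p\simeq W_p\oplus \Sigma^2 W_p$, which are verified directly by a $2s$-dimensional computation. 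Alternatively one can compute $\dim\ker((P^1)^k)$ for $k=1,\ldots,p$ on each side and check equality, which determines the Jordan decomposition uniquely.

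The main obstacle is bookkeeping the internal grading shifts on the $W_p$ summands that appear when $r+s>p$, since those summands arise by collapse of symmetric powers of dimension $>p$ rather than directly from a Clebsch--Gordan highest-weight vector. The inductive approach via the Pieri rule handles this cleanly: each step exchanges graded summands whose shifts are already known, so the shifts propagate through the induction unambiguously and match the stated formula $\Sigma^{2r-2i}$ on both the $W_p$ block ($i=c+1,\ldots,r$) and the smaller Jordan block ($i=1,\ldots,c$).
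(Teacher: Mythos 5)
Your route is genuinely different from the paper's. The paper simply invokes a known tensor-product decomposition for $\Fp[C_p]$-modules (\cite[Theorem 1]{tensor-decomposition}) and transfers it to $P(1)^*$-comodules via the representation-ring isomorphism of \Cref{rep-rings-iso}; it is a two-line proof. You instead identify $W_l\cong\Sym^{l-1}(W_2)$ for $1\le l\le p$ and derive the decomposition from Clebsch--Gordan in characteristic zero together with a Pieri rule and the projectivity of $W_p$ for the modular corrections. That is a self-contained, bottom-up argument and is sound in outline; the identification $W_l\cong\Sym^{l-1}(V)$ and the two Pieri rules you quote are all correct, and the reduction of the $r+s>p$ case to projectivity of $W_p$ plus Krull--Schmidt is the right idea.

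However, the displayed Clebsch--Gordan formula has the degree shifts attached to the wrong summands. With your indexing, the $i$th summand of $\Sym^{r-1}(V)\otimes\Sym^{s-1}(V)$ (highest-weight vector reached by $i-1$ lowerings from $e_1^{r-1}\otimes e_1^{s-1}$) is $\Sym^{r+s-2i}(V)\cong W_{r+s-2i+1}$. Its highest-weight vector lives in internal degree $2(r+s-2)-2(i-1)$, which puts the \emph{bottom} class of that summand in degree $2(i-1)$; so the shift is $\Sigma^{2i-2}$, not $\Sigma^{2r-2i}$. For example, your formula predicts $W_2\otimes W_3\cong\Sigma^2 W_4\oplus W_2$, whereas the correct answer (and the lemma's) is $W_4\oplus\Sigma^2 W_2$. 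After reindexing $i\leadsto r+1-i$, $\Sigma^{2i-2}\Sym^{r+s-2i}(V)$ becomes $\Sigma^{2r-2i}W_{s-r+2i-1}$, which is the lemma's formula; so the slip is a bookkeeping error that cancels against a tacit reindexing, but the intermediate statement and the claim that it gives the $c=r$ case ``directly'' are wrong as written and should be fixed. Once the shift is corrected, the argument goes through, with the caveat that the inductive step via the Pieri rule (peeling $W_r$ off of $V\otimes W_{r-1}=W_r\oplus\Sigma^2W_{r-2}$ and invoking Krull--Schmidt) should be spelled out rather than merely asserted.
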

\begin{proof}
According to \cite[Theorem 1]{tensor-decomposition}, if $V_l$ is the length $l$ representation of $\KCp$, replacing $W_l$ with $V_l$ everywhere in the formula gives the tensor decomposition for $V_r\otimes V_s$. By \Cref{rep-rings-iso}, this suffices.
\end{proof}

In other words, to decompose $W_r\otimes W_s$, first apply the corresponding $\sl_2$ decomposition rule for irreducible $\sl_2$-modules of dimension $r$ and $s$:
\[W_r\otimes W_s = W_{r-s+1} \oplus \cdots \oplus W_{r+s-1}\]
When $l$ is larger than $p$, there is no indecomposable $P(1)_*$ comodule named $W_{l}$. If $W_{p+l}$ shows up in the list for $l>0$, then replace $W_{p+l}\oplus W_{p-l}$ with $W_p^{\oplus 2}$. This gives the decomposition rule.

\begin{cor}
\label{topological-tensor-decomposition}
Let $1\leq r\leq s\leq p$ and let $c$ be as in \Cref{algebraic-tensor-decomposition}. Then:
\[\EO\sm X_r\sm X_s\simeq \EO\sm \left(\bigvee_{i=c+1}^{r} \susp^{2r-2i} X_p\vee\bigvee_{i=1}^c \susp^{2r-2i}X_{s-r+2i-1}\right).\]
\end{cor}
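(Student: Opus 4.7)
The plan is to deduce this topological decomposition from the algebraic one (\Cref{algebraic-tensor-decomposition}) by verifying that $X_r\sm X_s$ has algebraic $\EO$ theory and then invoking \Cref{splitting-from-HFp} to transfer the $P(1)_*$-comodule splitting to a splitting of $\EO$-modules.

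First, I would observe that $X_r$ has cells only in degrees $2k(p-1)$ for $0\leq k\leq r-1$ and similarly for $X_s$, so $X_r\sm X_s$ is a connective $(2p-2)$-sparse spectrum. By \Cref{2n-sparse-algebraic-EO-theory}, $X_r\sm X_s$ has algebraic $\EO$ theory. Next, by the Künneth isomorphism for mod $p$ homology, we have $\HFp_*(X_r\sm X_s)\cong \HFp_*(X_r)\otimes\HFp_*(X_s)\cong W_r\otimes W_s$ as $P(1)_*$-comodules.

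Now \Cref{algebraic-tensor-decomposition} gives the desired decomposition of $W_r\otimes W_s$ into a direct sum of suspended indecomposables $\susp^{s_i}W_{l_i}$ of lengths $l_i\in\{1,\ldots,p\}$. Applying \Cref{splitting-from-HFp} to the spectrum $X_r\sm X_s$ (which we have just verified has algebraic $\EO$ theory) converts this $P(1)_*$-comodule splitting into the claimed equivalence
\[\EO\sm X_r\sm X_s\simeq \EO\sm \left(\bigvee_{i=c+1}^{r} \susp^{2r-2i} X_p\vee\bigvee_{i=1}^c \susp^{2r-2i}X_{s-r+2i-1}\right).\]

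The only subtlety is tracking the integer (rather than mod $2p^2n^2$) shifts on the right-hand side. Since all cells of $X_r\sm X_s$ sit in degrees at most $2(p-1)(r+s-2)<2p^2n^2$, the shifts $2r-2i$ appearing on the left of the decomposition agree with the corresponding mod $2p^2n^2$ shifts produced by \Cref{splitting-from-HFp}, so no ambiguity arises. The potential obstacle — a mismatch between the algebraic decomposition and available topological splittings — is ruled out precisely because the sparsity hypothesis forces algebraicity.
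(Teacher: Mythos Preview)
Your proof is correct and follows essentially the same route as the paper: observe that $X_r\sm X_s$ is $(2p-2)$-sparse, invoke \Cref{2n-sparse-algebraic-EO-theory} to get algebraic $\EO$ theory, and then apply \Cref{splitting-from-HFp} to the decomposition of \Cref{algebraic-tensor-decomposition}. Your extra paragraph about shift tracking is harmless but unnecessary, since \Cref{splitting-from-HFp} already transfers the integer shifts $s_i$ from the $\HFp_*$-decomposition directly to the $\EO$-module splitting without any mod $2p^2n^2$ ambiguity.
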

\begin{proof}
$X_i\sm X_j$ is $2n$-sparse so by \Cref{2n-sparse-algebraic-EO-theory} $X_i\sm X_j$ has algebraic $\EO$ theory. Apply \Cref{splitting-from-HFp} to \Cref{algebraic-tensor-decomposition}.
\end{proof}

\begin{cor}
\label{alg-EO-mod-closed-under-smash}
If $M$ and $N$ are algebraic $\EO$-modules, then so is $M\sm_{\EO}N$.
\end{cor}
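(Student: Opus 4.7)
The plan is to reduce to the case of a single smash product of the form $X_r\sm X_s$ and invoke \Cref{topological-tensor-decomposition}.

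First, unpack the definition. By \Cref{defn-algebraic-EO-theory}, write $M\simeq \EO\sm\bigwsum_{i} \susp^{s_i} X_{l_i}$ and $N\simeq \EO\sm\bigwsum_{j} \susp^{t_j}X_{m_j}$ with each $l_i,m_j\in\{1,\ldots,p\}$. Distributing the relative smash product over wedges, we obtain
\[
M\sm_{\EO}N \;\simeq\; \bigwsum_{i,j}\susp^{s_i+t_j}\bigl(\EO\sm X_{l_i}\sm X_{m_j}\bigr).
\]

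Next, apply \Cref{topological-tensor-decomposition} to each factor. For each pair $(i,j)$, after reordering so that the smaller length comes first, $\EO\sm X_{l_i}\sm X_{m_j}$ is equivalent to a wedge of suspensions of $\EO\sm X_k$'s for various $k\in\{1,\ldots,p\}$, so in particular it is algebraic. Wedging these together across $(i,j)$ and incorporating the external shifts $\susp^{s_i+t_j}$ produces a wedge of suspensions of $\EO\sm X_{k}$'s, which is by definition algebraic.

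There is essentially no obstacle: the work has already been done in \Cref{algebraic-tensor-decomposition} and \Cref{topological-tensor-decomposition}, which in turn rested on the sparsity criterion \Cref{2n-sparse-algebraic-EO-theory} applied to $X_r\sm X_s$. The only minor point to check is that the distributivity of $\sm_{\EO}$ over arbitrary wedges is legitimate, which is standard for modules over an $E_\infty$-ring spectrum.
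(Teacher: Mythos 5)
Your argument is correct and is precisely the (unwritten) proof the paper intends: the corollary is stated immediately after \Cref{topological-tensor-decomposition} with no proof because it follows by writing $M\simeq\EO\sm A$ and $N\simeq\EO\sm B$ with $A,B$ wedges of suspended $X_l$'s, using $(\EO\sm A)\sm_{\EO}(\EO\sm B)\simeq\EO\sm A\sm B$, distributing over wedges, and applying \Cref{topological-tensor-decomposition} factorwise. Nothing is missing.
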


\begin{prop}
\label{alg-EO-mod-closed-under-sym}
Suppose that $M$ is an algebraic $\EO$-module and $i<p$ is an integer. Let $\Sym^i_{\EO}(M) = M^{\sm_{\EO}i}_{h\Sigma_i}$ be the $i$th symmetric power of $M$ relative to $\EO$. Then $\Sym^i(M)$ is algebraic and
\[\EEO_*\left(\Sym^i_{\EO}(X)\right)/\mfrak = \Sym^i\left(\EEO_*(X)/\mfrak\right)\]
where the right hand side is the symmetric power in $\KCp$-modules.
\end{prop}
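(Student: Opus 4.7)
The plan is to show that $\Sym^i_{\EO}(M)$ is a retract of $M^{\sm_{\EO} i}$, deduce algebraicity from closure under retracts and smash products, and then compute the Kunneth formula modulo $\mfrak$.

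First, since $i<p$, the integer $i!=|\Sigma_i|$ is a unit in $\Zp=\pi_0\EO$, so the canonical projection $M^{\sm_{\EO}i}\to \Sym^i_{\EO}(M)=M^{\sm_{\EO}i}_{h\Sigma_i}$ splits via the normalized symmetrization $\tfrac{1}{i!}\sum_{\sigma\in\Sigma_i}\sigma$. Iterating \Cref{alg-EO-mod-closed-under-smash}, $M^{\sm_{\EO}i}$ is algebraic, and algebraic $\EO$-modules are closed under retracts (as noted in the introduction), so $\Sym^i_{\EO}(M)$ is algebraic.

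To identify the $K_*[C_p]$-module structure of $\EEO_*(\Sym^i_{\EO}(M))/\mfrak$, I would use that $\EEO_*(M)$ is a free $E_*$-module (being a sum of shifts of $E_*(X_l)$'s, each of which is $E_*$-free). Thus the iterated Kunneth isomorphism gives an identification
\[
\EEO_*(M^{\sm_{\EO}i})\;\cong\;\EEO_*(M)^{\otimes_{E_*} i}
\]
as $E_*[C_p\times \Sigma_i]$-modules, where $\Sigma_i$ permutes tensor factors and $C_p$ acts diagonally. Since $M^{\sm_{\EO}i}$ is algebraic, $\EEO_*(M^{\sm_{\EO}i})$ is again $E_*$-free, so there is a further Kunneth-style identification of $\EEO_*(\Sym^i_{\EO}M)$ with the $\Sigma_i$-orbits of $\EEO_*(M^{\sm_{\EO}i})$; concretely, the splitting of the projection by $\tfrac{1}{i!}\sum_\sigma \sigma$ persists on the level of relative $E$-homology, identifying $\EEO_*(\Sym^i_\EO M)$ with the image of this idempotent, which is the $\Sigma_i$-coinvariants (equivalently, the invariants, equivalently the symmetric power) of $\EEO_*(M)^{\otimes_{E_*} i}$.

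Finally, reducing modulo $\mfrak$ commutes with the symmetric power because $i!$ is a unit: the functor $V\mapsto \Sym^i(V)$ on $E_*$-modules is a retract of $V\mapsto V^{\otimes i}$, which is exact on free modules and commutes with $-\otimes_{E_*}K_*$. This yields
\[
\EEO_*(\Sym^i_{\EO}(M))/\mfrak\;\cong\;\Sym^i\bigl(\EEO_*(M)/\mfrak\bigr)
\]
as $K_*[C_p]$-modules, which is the claimed formula. The only nontrivial point is the first paragraph's use of closure under retracts for algebraic $\EO$-modules; everything else is formal consequences of $i!$ being a unit, the Kunneth isomorphism for algebraic $M$, and \Cref{alg-EO-mod-closed-under-smash}.
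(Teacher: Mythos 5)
Your proof is correct, but it takes a genuinely different route from the paper's. The paper first uses the binomial formula for symmetric powers of a wedge to reduce to the single-summand case $M = \EO\sm X_l$, then identifies $\Sym^i_{\EO}(\EO\sm X_l) = \EO\sm\Sym^i(X_l)$, and finally observes that $\Sym^i(X_l)$ is $2n$-sparse (because $X_l$ is), so \Cref{2n-sparse-algebraic-EO-theory} gives algebraicity directly; the $E_*$ and $\HFp_*$ computations come from the homotopy orbit spectral sequence $(E\sm X_l^{\sm i})_{h\Sigma_i}$ with $i!$ invertible. You instead split $\Sym^i_{\EO}(M)$ off $M^{\sm_{\EO}i}$ as a retract via the norm idempotent, invoke \Cref{alg-EO-mod-closed-under-smash} iteratively for $M^{\sm_{\EO}i}$, and then appeal to closure of algebraic $\EO$-modules under retracts. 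Both arguments ultimately route through the $2n$-sparsity theorem (yours indirectly, via \Cref{topological-tensor-decomposition}), and both use the invertibility of $i!$ for the $E$-homology and mod-$\mfrak$ identifications. The trade-off: the paper's route avoids any reliance on the retract-closure of algebraic $\EO$-modules (a fact the paper asserts in the introduction as ``clear from the definition'' but never proves), proving instead an explicit sparsity statement; your route is shorter and more structural but leans on that retract-closure assertion, so a reader skeptical of that claim would want it spelled out (e.g., via a Krull--Schmidt-type argument on the $\AHSS$ or on $\EEO_*(-)/\mfrak$). Your Künneth/$\mfrak$-reduction paragraph is essentially the same content as the paper's homotopy orbit spectral sequence argument, just phrased via the idempotent rather than the spectral sequence.
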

It follows that the formula for symmetric powers of $\KCp$-modules determines the symmetric powers of $\EO$-module. Corollary 2.7 of \cite{symmetric-powers} gives a generating function for these symmetric powers which we quote as \Cref{symmetric-powers-generating-function}.

\begin{proof}
Using the binomial formula for the symmetric powers of a sum, the theorem reduces to the case of $M=\EO\sm X_l$. In this case $\Sym^i_{\EO}(\EO\sm X_l)=\EO\sm \Sym^i(X_l)$ so we need to show that $\Sym^i(X_l)$ has algebraic $\EO$ theory. If $E$ is a spectrum with trivial $G$ action and $X$ has a $G$-action, then $E\sm X_{hG}\simeq (E\sm X)_{hG}$, so there is a homotopy orbit spectral sequence $H_*^{\Sigma_i}(E_*(X_l^{\sm i}))\Rightarrow E_*\left(\Sym^i(X_l)\right)$. Since $i$ is $p$-locally invertible and there is a K\"unneth formula, we deduce that $E_*(\Sym^i(X_l))=\Sym^i(E_*(X_l))$ and $\HFp_*\left(\Sym^i(X_l)\right) = \Sym^i\left(\HFp_*X_l\right)$. Because $X_l$ is $2n$-sparse, so is $\Sym^i\left(\HFp_*X_l\right)$ and thus $\Sym^i(X_l)$ is $2n$-sparse and has algebraic $\EO$ theory.
\end{proof}

\section{\texorpdfstring{$Y_{2p}$}{Y\_{2p}} has Algebraic \texorpdfstring{$\EO$}{EO}-theory}
\label{sec:Y2p}
As a fun application of our theory, we show that $Y_{2p}$ has algebraic $\EO$ theory.
\begin{thm}
\label{Y2p-orientation}
Let $Y_{2p}=\OS{\BPn<1>}{2p}$. Suppose $Z$ is any spectrum with $\HFp_*(Z)\cong \HFp_*(Y_{2p})$ as $P(1)_*$-comodules. Then $Z$ has algebraic $\EO$ theory.
\end{thm}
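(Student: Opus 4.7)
The approach is to combine Wilson's description of $\HFp_*(Y_{2p})$ as a free graded-commutative Hopf algebra with the closure properties of algebraic $\EO$-modules established in \Cref{sec-splittings} and the symmetric power decomposition from the appendix.

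First I would unpack $\HFp_*(Y_{2p})$ as a $P(1)_*$-comodule. Since $Y_{2p} = \OS{\BPn<1>}{2p}$ is a simply connected infinite loop space whose mod $p$ homology is concentrated in even degrees, $\HFp_*(Y_{2p})$ is a polynomial algebra, hence of the form $\Sym^*(V)$ for a $P(1)_*$-comodule $V$ supported on the module of indecomposables. The $P(1)_*$-coaction on $V$ is read off the $P^1$-action on the polynomial generators of $\HFp^*(Y_{2p})$, and the Adem relation $P^1 P^{p-1}\equiv 0 \pmod p$ truncates every $P^1$-chain at length at most $p$. Therefore $V \cong \bigoplus_i \susp^{s_i} W_{l_i}$ with $1\leq l_i \leq p$.

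Next I would construct an explicit algebraic $\EO$-module $M$ whose $\HFp_*$ matches $\HFp_*(Y_{2p})$ as a $P(1)_*$-comodule. Writing
\[
\HFp_*(Y_{2p}) \cong \Sym^*(V) \cong \bigotimes_i \Sym^*(\susp^{s_i} W_{l_i}),
\]
it suffices to realize each tensor factor as an algebraic $\EO$-module and then smash using \Cref{alg-EO-mod-closed-under-smash}. For symmetric powers $\Sym^j(W_l)$ with $j<p$ this is handed to us directly by \Cref{alg-EO-mod-closed-under-sym}, which identifies $\Sym^j(W_l)$ with $\HFp_*$ of $\Sym^j(X_l)$. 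For $j\geq p$, the symmetric-power closure result no longer applies and we instead use the generating-function formula for $\Sym^j$ of a $P(1)_*$-comodule (\Cref{symmetric-powers-generating-function}) to decompose $\Sym^j(W_l)$ into a direct sum of shifts of $W_k$'s. Each such $W_k$ is realized by an $X_k$, which is $(2p-2)$-sparse and hence algebraic by \Cref{2n-sparse-algebraic-EO-theory}.

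Finally, for an arbitrary $Z$ with $\HFp_*(Z) \cong \HFp_*(Y_{2p})$ I would argue by cellular induction. Choose a cellular filtration $Z_0 \subset Z_1 \subset \cdots$ of $Z$ whose successive subquotients match the summands in the $P(1)_*$-decomposition. At each stage, the constraints on cell degrees together with the description of $\EO_*$ force the Hurewicz image of each attaching map to lie in the $\alpha^{(l)}$-span of the relevant $\EEO_*$-module, so \Cref{cone-alphas-algebraic} applies and $\EO\sm Z_i$ remains algebraic. Passing to the homotopy colimit and invoking \Cref{algebraic-EOmod-closed-under-union} shows $\EO\sm Z$ is algebraic.

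The main obstacle is the decomposition of $\Sym^j(W_l)$ for $j\geq p$, where \Cref{alg-EO-mod-closed-under-sym} no longer applies directly; the appendix on symmetric powers of $P(1)_*$-comodules is exactly what allows us to break these large symmetric powers into sparse indecomposables that can be realized one at a time.
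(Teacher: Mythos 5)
Your proposal has a genuine gap at the final step, and it also misses the structural observation that makes the paper's argument go through.

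The crucial point you do not observe is that the module of indecomposables of $\HFp^*(Y_{2p})$ is \emph{free} as a $P(1)^*$-module: by \Cref{HFpY2p-Sym}, $\Fp\{c_p,c_{p+n},c_{p+2n},\ldots\}$ is the free module on the generators $c_{p+pnk}$, i.e.\ a sum of shifted copies of $W_p$ only, not of arbitrary $W_{l_i}$. This matters because it means the only symmetric powers you need are $\Sym^k(W_p)$, which by \Cref{symmetric-powers-P(1)} is either a sum of $W_p$'s or a sum of $W_p$'s plus a single trivial $W_1$, and — this is the key point — the trivial summands are generated by $p$th powers of monomials in the $c_{p+pnk}$, so they live in degrees $\equiv 0 \pmod{2p}$. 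Your more generic decomposition $V\cong\bigoplus \susp^{s_i}W_{l_i}$ throws away exactly the information needed to control where the non-free summands land.

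Your final step — a cellular induction using \Cref{cone-alphas-algebraic} so that ``$\EO\sm Z_i$ remains algebraic'' — is precisely the argument in the proof of \Cref{2n-sparse-algebraic-EO-theory}, and it requires $2n$-sparsity: the cells must sit in a single congruence class mod $2n$ for the Hurewicz image of each attaching map to be forced into the $\alpha^{(l)}$-span. But $\HFp_*(Y_{2p})$ is only even, not $(2p-2)$-sparse: the generators $c_{p+nk}$ all sit in degrees $\equiv 2 \pmod{2n}$, so the monomial $c_p^a$ sits in degree $\equiv 2a \pmod{2n}$, which ranges over all even residues. So \Cref{2n-sparse-algebraic-EO-theory} and its proof technique simply do not apply to $Z$, and the Hurewicz-image constraint you invoke is not available. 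The paper avoids this by first splitting off all the $W_p$ summands at once using \Cref{frees-split} (which requires no sparsity, only that the free part is a split summand on $\EEO_*$) and then observing that what remains is $2p$-sparse, at which point \Cref{2p-sparse-algebraic-EO-theory} — not \Cref{2n-sparse-algebraic-EO-theory} — finishes the job.

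Two smaller issues: \Cref{symmetric-powers-generating-function} is only stated $\pmod{t^p}$ and so only computes $\Sym^j$ for $j<p$; the case $j\geq p$ needs \Cref{symmetric-powers-periodicity}, which you do not cite. And \Cref{alg-EO-mod-closed-under-smash} is a statement about smash products of algebraic $\EO$-modules — to smash together infinitely many factors $\Sym^*(\susp^{s_i}W_{l_i})$ you would need to first realize each infinite symmetric algebra as an algebraic $\EO$-module (already a colimit argument via \Cref{algebraic-EOmod-closed-under-union}), and even then you would only have built \emph{some} algebraic $\EO$-module with the right homology, not shown that $\EO\sm Z$ itself is algebraic for the given $Z$.
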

By \Cref{alg-EO-from-homology-impl-Chern-determined} we deduce:
\begin{cor}
$\EO$-orientability of complex bundles over $Y_{2p}$ is Chern-determined.
\end{cor}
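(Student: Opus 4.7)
The plan is to deduce the corollary as a one-line combination of \Cref{alg-EO-from-homology-impl-Chern-determined} and \Cref{Y2p-orientation}. \Cref{alg-EO-from-homology-impl-Chern-determined} takes as input the statement that every spectrum $Y$ with $\HFp_*(Y) \cong \HFp_*(Z)$ as $P(1)_*$-comodules has algebraic $\EO$-theory, and outputs precisely the Chern-determined orientability conclusion for $Z$. Specializing to $Z = Y_{2p}$, the required input is exactly \Cref{Y2p-orientation}, which was just established. So the proof consists of one sentence: apply \Cref{alg-EO-from-homology-impl-Chern-determined} with $Z = Y_{2p}$, using \Cref{Y2p-orientation} to verify the hypothesis.

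For context, what this combination is doing under the hood: the only Chern-class obstruction to $\EO$-orientability detectable from the $P^1$-action on the Thom class is the congruence $\psi_{p-1}(V) \equiv 0 \pmod p$, since $P^1$ acts on the Thom class $u$ via $P^1(u) = \overline{\psi}_{p-1}\cdot u$. When this congruence holds, the Thom isomorphism $\HFp_*(\Th(V)) \cong \HFp_*(Y_{2p})$ is compatible with the $P(1)_*$-coaction. \Cref{Y2p-orientation} then upgrades both $\Th(V)$ and $\susp^{\infty}_+ Y_{2p}$ to algebraic $\EO$-modules, and \Cref{B-iso-implies-EO-equiv} promotes the $P(1)_*$-comodule isomorphism to an $\EO$-equivalence $\EO\sm \Th(V) \simeq \EO\sm Y_{2p+}$, which witnesses the $\EO$-orientation of $V$.

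The essential content lies entirely in \Cref{Y2p-orientation}; this corollary is purely a formal repackaging, so there is no obstacle to overcome here.
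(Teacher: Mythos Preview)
Your proposal is correct and matches the paper's approach exactly: the paper deduces the corollary directly from \Cref{alg-EO-from-homology-impl-Chern-determined}, with \Cref{Y2p-orientation} supplying the required hypothesis.
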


To understand $\EO\sm Y_{2p}$ we first need to compute $\HFp^*(Y_{2p})$ as a $P(1)^*$-module. First we check:
\begin{lem}
\label{HFpY2p-Sym}
As a $P(1)^*$-module, $\HFp^*(Y_{2p}) = \Fp[c_p, c_{p+n}, c_{p+2n},\ldots]$ with the $P(1)^*$-module structure:
\[
    P^1(c_{i})=(i+n)c_{i+n}
\]
\end{lem}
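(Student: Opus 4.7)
The approach is to compute the algebra structure of $\HFp^*(Y_{2p})$ using Wilson's classical computation of the cohomology of Wilson spaces, identify specific polynomial generators as pullbacks of Chern classes along a map $Y_{2p}\to\BU$, and then deduce the $P^1$-action by applying the standard Chern-class formulas in $\HFp^*(\BU)$ and restricting.

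First, I would establish the algebra structure. Since $Y_{2p}=\OS{\BPo}{2p}$ has homotopy groups $\pi_{2(p+jn)}(Y_{2p})\cong\Z_{(p)}$ for $j\geq 0$ and zero in all other degrees, and since this homotopy is $p$-torsion free, Wilson's computation \cite{wilson} identifies $\HFp^*(Y_{2p})$ as a polynomial algebra with one generator in each degree $2(p+jn)$. This can alternatively be obtained by comparing Poincar\'e series.

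Second, I would identify the generators via the Wilson splitting. The splitting
$\BU[2p]_{(p)}\simeq Y_{2p}\times Y_{2p+2}\times\cdots\times\OS{\BPo}{4p-4}$
recalled in the introduction produces a composite $f\colon Y_{2p}\to\BU[2p]\to\BU$. I would set $c_{p+jn}:=f^*(c_{p+jn})\in\HFp^{2(p+jn)}(Y_{2p})$, where the right-hand $c_{p+jn}$ is the ordinary Chern class in $\HFp^*(\BU)$. Checking that $f^*$ is nonzero on indecomposables in each such degree shows these pullbacks form polynomial generators for $\HFp^*(Y_{2p})$.

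Third, I would compute $P^1$. On $\HFp^*(\BU)=\Fp[c_1,c_2,\ldots]$, write $c_i=e_i(\gamma_1,\gamma_2,\ldots)$ in terms of Chern roots. Since $P^1(\gamma_j)=\gamma_j^p$ and $P^1$ is a derivation, differentiating the total Chern class $E(t)=\prod(1+\gamma_jt)$ yields
\[
P^1(c_i)=\sum_{k=1}^{i}(-1)^{k-1}p_{p+k-1}c_{i-k},
\]
where $p_m=\sum\gamma_j^m$ is the $m$-th power sum, expressible in the $c_k$ via Newton's identities $p_m=c_1p_{m-1}-c_2p_{m-2}+\cdots+(-1)^{m-1}mc_m$. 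Applying $f^*$: all $c_i$ with $0<i<p$ vanish for connectivity reasons, so for $i=p+jn$ only the $k=p+jn$ term of the sum contributes. An induction using Newton's identities plus the same connectivity-vanishing shows that $p_{2p+jn-1}\equiv(p+(j+1)n)\,c_{p+(j+1)n}$ in $\HFp^*(Y_{2p})$, giving the desired formula $P^1(c_{p+jn})=(p+(j+1)n)\,c_{p+(j+1)n}=(i+n)c_{i+n}$.

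\textbf{Main obstacle.} The delicate point is controlling possible ``cross-terms'' in the Newton reduction when the target degree admits multiple generator-monomials: for instance, $c_p^p$ shares degree with $c_{p^2}$, which is where $P^1(c_{p+n^2})$ lives. One must verify these extra monomials do not appear with nonzero coefficient. Either a direct Newton-identities calculation kills them, or one absorbs them into the choice of generator $c_{p+(j+1)n}$. The latter is always available because the primitive subspace of $\HFp^*(Y_{2p})$ in the relevant degree is one-dimensional while the competing monomials are decomposable, so $P^1$ is forced onto the intended indecomposable up to decomposable corrections that can be cancelled by redefining the generator.
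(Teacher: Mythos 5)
Your proof follows the same outline as the paper's (Wilson for the algebra structure, identify generators as pullbacks of Chern classes along a map to $\BU$, compute $P^1$ there and restrict), but you use a different and substantially less convenient form of the $P^1$-on-Chern-classes formula, and this is where the real gap lies. The paper uses the identity
\[
P^1 c_i \;=\; c_i\psi_{n} - c_{i+1}\psi_{n-1} + \cdots - c_{i+n-1}\psi_1 + (i+n)\,c_{i+n}
\]
(with $\psi_j$ the power sums), in which \emph{every} correcting term carries a factor $\psi_j$ with $j\le n=p-1$. By Newton's identities each such $\psi_j$ lies in the ideal $(c_1,\ldots,c_n)\subset\HFp^*(\BU)$, and since $Y_{2p}$ is $2p$-connective the restriction map factors through $\HFp^*(\BU)/(c_1,\ldots,c_n)$; all the correcting terms therefore die at once, with no bookkeeping. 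Your formula $P^1 c_i=\sum_{k=1}^{i}(-1)^{k-1}\psi_{p+k-1}\,c_{i-k}$ is also correct in $\HFp^*(\BU)$, but it expresses the answer in terms of \emph{high}-degree power sums, and you must then run Newton's recursion to reduce each $\psi_{p+k-1}$ and show the resulting decomposable terms vanish in $\HFp^*(Y_{2p})$. You flag this as the main obstacle but do not resolve it, and your second proposed fix (absorb the decomposable error into a redefinition of $c_{i+n}$) cannot work in the cases $i+n\equiv 0\pmod p$: there the lemma asserts $P^1 c_i=0$ exactly, and the coefficient $(i+n)$ you would need to divide by is zero, so there is no generator redefinition available to cancel a nonzero decomposable error. (Your first proposed fix, a direct Newton calculation, would work, but it is exactly the content that your proposal does not supply.) The cleanest fix is to switch to the paper's form of the identity.

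Separately, you assert that $f^*$ hits indecomposables without proof. The paper establishes this by exhibiting $Y_{2p}\to\BU$ as a retract of spaces, using the splitting $\OS{\BPo}{2}\simeq\CPinfty\times Y_{2p}$ and the $v_1$/$\beta^{p-1}$ square, so that $\HFp^*(\BU)\to\HFp^*(Y_{2p})$ is a genuine surjection; combined with the degree count this pins down the generators. You should include this step rather than leave it as an unverified claim.
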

Note that this implies that $\Fp\{c_p, c_{p+n}, c_{p+2n},\ldots\}$ is a $P(1)^*$ submodule of $\HFp^*(Y_{2p})$ and so $\HFp^*(Y_{2p})=\Sym(\Fp\{c_p, c_{p+n}, c_{p+2n},\ldots\})$.  Furthermore $\Fp\{c_p, c_{p+n}, c_{p+2n},\ldots\}$ is the free $P(1)^*$-module on the generators $c_{p + pnk}$ for all $k$.
\begin{proof}
According to \cite{wilson}, the cohomology of $Y_{2p}$ is as indicated as an $\Fp$-algebra. There is a map of spectra $\phi\colon\BPo\to \ku$ including an Adams summand. Call the splitting map $\rho$. These induce maps between the loop spaces of $\BPo$ to the loop spaces of $\ku$. Because $\phi(v_1)=\beta_1^{p-1}$ we have a commutative diagram of infinite loop spaces:
\[\begin{tikzcd}
\hphantom{\CPinfty\times} Y_{2p} \rar[equals] &[-2em] \BPOS{2p} \rar[, "\phi"]\dar[xshift=-3pt,shorten <=-2pt,"v_1"'] & \OS{\ku}{2p}\dar[shorten <=-2pt, "\beta^{p-1}"]\\
          \CPinfty\times  Y_{2p} \rar[equals] & \BPOS{2} \rar[yshift=-3pt,"\phi"']\uar[shorten >=-2pt,dashed,xshift=3pt] &\lar[dashed,yshift=3pt,"\rho"'] \OS{\ku}{2}\mathrlap{{}=\BU}
\end{tikzcd}\]
By \cite[main theorem]{wilson}, $\OS{\BPo}{2}\simeq \CPinfty\times Y_{2p}$ so the map $\Ytp\to \OS{\BPo}{2}$ is a retract of spaces and the vertical dashed map exists. Thus, the map $Y_{2p}\to BU$ is a retract and we get a surjection $\HFp^*(BU)\to \HFp^*(Y_{2p})$. Since $Y_{2p}$ is $2p$-connective, this factors through $\HFp^*(BU)/(c_1,\ldots,c_{n})$ where $\HFp^*(BU)=\Fp[c_i]$. In $\HFp^*(BU)$ we have the formula:
\[P^1c_i = c_i\psi_{n} - c_{i+1}\psi_{n-2} + -\cdots -c_{i+n-1}\psi_1 + (i+n)c_{i+n}\]
Because $\psi_i\in (c_1,\ldots,c_n)$ for $i\leq n$, we deduce that $P^1(c_i)=(i+n)c_{i+n}$.
\end{proof}

The last input we need for this is \cref{symmetric-powers-P(1)} which computes the following symmetric powers formula:
\[\Sym^{k}(W_p) =
    \begin{cases}
    W_p^d  & p \nmid k\\
    W_1 \oplus W_p^d & p | k
    \end{cases}
\]

If $M=\Fp[C_p]\{x_1,\ldots,x_d\}$ is a free $C_p$-module then the trivial summands in $\Sym(M)$ are generated by $(x_1^{k_1}\cdots x_d^{k_d})^p$.

\begin{proof}[Proof of \Cref{Y2p-orientation}]
We showed in \cref{HFpY2p-Sym} that
\[\HFp^*(Y_{2p}) = \Sym(\Fp\{c_{p+nk}\})\]
as $P(1)^*$-representations. By \Cref{symmetric-powers-P(1)}, this splits into a sum of free modules and trivial modules where the trivial modules are generated by elements of the form $(c_{p+npk_1}\cdots c_{p+npk_i})^p$, so these all live in degrees congruent to zero mod $2p$. By \Cref{frees-split}, we can express $\EO\sm Y_{2p} = M \vee \bigwsum_{\infty} E $ where $M$ has cells in degrees $0$ mod $2p$.
By \Cref{2p-sparse-algebraic-EO-theory}, $M$ splits as a sum of $\EO$'s. On the other hand, $\HFp^*(MY_{2p})\cong \HFp^*(Y_{2p})$ as $P(1)^*$-representations because $P^1(u)\in \HFp^{2p-2}(\Mf)=0$.
The same logic applies to $\EO\sm \Mf$ and shows that it has the same splitting as $\EO\sm Y_{2p}$. Matching the summands gives a Thom isomorphism. The composite $\Mf\to \EO\sm \Mf \to \EO\sm \susp^{\infty}_+Y_{4p-4} \to \EO$ is a unital map $\Mf\to \EO$.
\end{proof}

\appendix
\section{Symmetric Powers of \texorpdfstring{$P(1)^*$}{P(1)*} Modules}
In this appendix, we show that the representation rings of $\Fp[C_p]$ and $P(1)^*$ are isomorphic, that the symmetric powers are the same for each, and discuss a formula for the symmetric powers.

For $A$ a Hopf algebra, let $\Rep(A)$ be the representation tensor category of $A$ and let $R(A)$ be the representation ring.
\begin{lem}
\label{rep-rings-iso}
Let $V_l$ be the length $l$ cyclic module over $\Fp[C_p]$ and let $W_l$ be the length $l$ cyclic module over $P(1)^*$. The map $\phi\colon R(\Fp[C_p])\to R(P(1)^*)$ sending $V_l\mapsto W_l$ is an isomorphism of representation rings.
\end{lem}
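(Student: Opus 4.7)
The approach is to give both representation rings a common presentation as a quotient of $\Z[t]$, with $\phi$ being the induced isomorphism sending $V_2\mapsto W_2$. Both $\Fp[C_p]$ and $P(1)^*$ are isomorphic as $\Fp$-algebras to $\Fp[x]/(x^p)$, so finite-dimensional modules for either are classified up to isomorphism by the Jordan type of the action of the algebra generator. Hence the indecomposables of each are exactly the cyclic modules $V_1,\ldots,V_p$ (resp.\ $W_1,\ldots,W_p$), both representation rings are free abelian of rank $p$, and $\phi$ is tautologically an isomorphism of abelian groups. The content is showing it preserves products.

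The key step is to verify the ``Clebsch--Gordan'' identities
\[
V_2\otimes V_l\cong V_{l-1}\oplus V_{l+1}\quad(1\le l<p),\qquad V_2\otimes V_p\cong V_p\oplus V_p,
\]
together with the analogous identities for $W$. In each case the augmentation-ideal generator acts on the tensor product as an explicit nilpotent operator: $s\otimes 1+1\otimes s+s\otimes s$ on $V_2\otimes V_l$ (since $\sigma$ is group-like) and $P^1\otimes 1+1\otimes P^1$ on $W_2\otimes W_l$ (since $P^1$ is primitive). Writing a general element in the basis $\{e_i\otimes f_j\}$, a short linear-algebra calculation in each case shows $\dim\ker N=2$, forcing exactly two indecomposable summands; inspecting $\dim\ker N^k$ for small $k$ pins down the block sizes.

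Given these decompositions, define $U_k(t)\in\Z[t]$ by $U_0=1$, $U_1=t$, $U_{k+1}=tU_k-U_{k-1}$. The first identity gives by induction $V_l=U_{l-1}(V_2)$, and the $l=p$ case becomes $(V_2-2)U_{p-1}(V_2)=0$; identically for $W$. Since $(t-2)U_{p-1}(t)$ is monic of degree $p$, the quotient $\Z[t]/\bigl((t-2)U_{p-1}(t)\bigr)$ is free abelian of rank $p$, so the evident surjection onto either $R(\Fp[C_p])$ or $R(P(1)^*)$ (sending $t$ to $V_2$ or $W_2$) is an isomorphism. Composing these two isomorphisms, $\phi$ sends $V_2\mapsto W_2$ and hence $V_l=U_{l-1}(V_2)\mapsto U_{l-1}(W_2)=W_l$, so $\phi$ is a ring isomorphism.

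The main obstacle is the tensor-product decomposition itself. For the $V$-side this is a classical fact in the modular representation theory of $C_p$, recorded in \cite[Theorem 1]{tensor-decomposition}; for the $W$-side the analogous formulas have to be proved independently by the same Jordan-type computation, since here we cannot invoke the isomorphism we are trying to prove. Both reduce to direct, if slightly involved, linear algebra in the bases $\{e_i\otimes f_j\}$.
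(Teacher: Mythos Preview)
Your proof is correct and follows the same basic idea as the paper's: both rely on the Clebsch--Gordan rule $V_2\otimes V_l\cong V_{l-1}\oplus V_{l+1}$ to exhibit $V_2$ (resp.\ $W_2$) as a ring generator. The paper, however, stops after checking only the single identity $\phi([V_2]^2)=\phi([V_2])^2$, asserting that for a map already defined additively it suffices to verify multiplicativity on a generating set---a claim which, as stated, is not valid in general. Your argument closes this gap by verifying the full Clebsch--Gordan relation on \emph{both} sides and then packaging it into an explicit common presentation $R\cong\Z[t]/\bigl((t-2)U_{p-1}(t)\bigr)$, from which the ring isomorphism is immediate. The extra linear algebra you do for the $W$-side (computing $\dim\ker N^k$ for the primitive $P^1$-action on $W_2\otimes W_l$) is genuinely needed and cannot be replaced by an appeal to \cite{tensor-decomposition}, which only covers the group-algebra case; this is exactly the point at which the paper's shortcut is incomplete. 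So your route is the same in spirit but more careful in execution, and the explicit presentation you obtain is a nice bonus that makes the later $\lambda$-ring style arguments in the appendix more transparent.
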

There is no lift of $\phi$ to a functor because that would imply an isomorphism of Hopf algebras $\Fp[C_p]\cong P(1)^*$ by Tannakian reconstruction.

\begin{proof}
It is clear that $\phi$ is an isomorphism of the underlying graded abelian groups, where the tensor product is forgotten. We need to show $\phi$ respects decompositions of tensor products. By direct computation, it is not hard to show that $V_2\otimes V_l \cong V_{l-1}\oplus V_{l+1}$ for $1<l < p$ so $V_2$ tensor generates the category of $\Fp[C_p]$-modules (see \cite[Theorem 1]{tensor-decomposition}). This implies that $V_2$ is a generator for $R(\Fp[C_p])$. To check that a map is a ring homomorphism, it suffices to check that $f(xy)=f(x)f(y)$ for $x$ and $y$ each pair of elements in a generating set. In this case, our generating set has one element, so it suffices to show $\phi([V_2]^2) = \phi([V_2])^2$. The formula is given by
\[\phi([V_2]^2)=\phi([V_1]+ [V_3])=\phi([V_1])+\phi([V_3])=[W_1]+[W_3] = [W_2]^2 = \phi([V_2])^2\qedhere\]
\end{proof}

Let $A_q$ be the Hopf algebra $\Fp[q,t]/(t^p)$ with the coaction $\Delta(t) = t\otimes 1 + 1\otimes t + t\otimes t$. Let $\Repqf$ be the full subcategory of $\Rep(A_q)$ spanned by representations that are free as $\Fp[q]$-modules. Consider the following diagram of Hopf algebras:
\[\begin{tikzcd}
& A_q\dlar["\theta_1"']\drar["\theta_0"] & \\
\Fp[C_p] && P(1)^*
\end{tikzcd}\]
The algebras $A_1$ and $A_0$ are specializations of $A_q$: $A_1 = A_q/(q-1) = \Fp[C_p]$ and $A_0=A_q/(q) = P(1)^*$.  It is fun to note that this diagram is the kernel of Verschiebung on the following diagram of formal groups:
\[\begin{tikzcd}
 & \ku\dlar\drar &&&& \ku^*(\CPinfty)\dlar \drar \\
\KU & & \HFp && \KU^*(\CPinfty) & & \HFp^*(\CPinfty)
\end{tikzcd}\]

\begin{lem}
The map $\theta_1^*\colon\Rqf(A_q)\to R(A_1)$ induced by tensoring down along $A_q\to A_1$ admits a section $\alpha$ such that $\theta_0\circ\alpha = \phi$.
\end{lem}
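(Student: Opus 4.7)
The plan is to define $\alpha$ on the additive basis $\{[V_l]\}$ of $R(A_1)$ by choosing explicit $q$-free lifts and extending $\Z$-linearly. For each $1 \le l \le p$, let $\widetilde V_l = \F_p[q][t]/(t^l)$, regarded as an $A_q$-module via left multiplication by $t$ (well-defined since $l\le p$ ensures $t^p$ acts as zero). Each $\widetilde V_l$ is free of rank $l$ as an $\F_p[q]$-module, so lies in $\Rqf(A_q)$, and direct inspection along the specialization maps shows that $\theta_1^*[\widetilde V_l] = [V_l]$ and $\theta_0^*[\widetilde V_l] = [W_l]$. Setting $\alpha([V_l]) := [\widetilde V_l]$ and extending $\Z$-linearly therefore automatically gives an abelian-group map satisfying $\theta_1^* \alpha = \mathrm{id}$ and $\theta_0^* \alpha = \phi$ on the generators $[V_l]$, so those compatibility conditions are built in.

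The substantive content of the lemma is that $\alpha$ respects multiplication, which reduces to verifying that for $1 \le r \le s \le p$ the tensor product $\widetilde V_r \otimes \widetilde V_s$ decomposes in $\Rqf(A_q)$ into summands $\widetilde V_k$ with exactly the multiplicities appearing in the Clebsch--Gordan decomposition of $V_r \otimes V_s$ in $R(A_1)$ (that is, the formula of \Cref{algebraic-tensor-decomposition}). Since $[V_2]$ generates $R(A_1)$ as a ring, as used in the proof of \Cref{rep-rings-iso}, by induction it suffices to establish the two-factor identity $\widetilde V_2 \otimes \widetilde V_s \cong \widetilde V_{s-1} \oplus \widetilde V_{s+1}$ for $1 < s < p$, together with the boundary case $\widetilde V_2 \otimes \widetilde V_p \cong \widetilde V_p^{\oplus 2}$.

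To prove these identities, take bases $\widetilde V_2 = \F_p[q]\{e_0, e_1\}$ and $\widetilde V_s = \F_p[q]\{f_0, \ldots, f_{s-1}\}$ with $t$ lowering indices, and expand the $t$-action on the tensor product using the coaction on $A_q$. One exhibits a cyclic length-$(s+1)$ submodule generated by $e_1 \otimes f_{s-1}$ together with a complementary cyclic length-$(s-1)$ submodule generated by a suitable $q$-antisymmetrization of the elements $\{e_0 \otimes f_j,\, e_1 \otimes f_{j-1}\}$. The main obstacle is ensuring that the resulting change-of-basis matrix from the standard basis to these cyclic-submodule generators has determinant a unit in $\F_p[q]$, not merely in the fraction field $\F_p(q)$ nor only after inverting $q(q-1)$: otherwise the splitting would exist only after localization, and $\widetilde V_r \otimes \widetilde V_s$ would fail to decompose in $\Rqf(A_q)$. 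A direct computation in the base case $s=2$, using the natural symmetric/antisymmetric basis of $\widetilde V_2^{\otimes 2}$, yields determinant $\pm 4 \in \F_p^\times$ since $p$ is odd; a systematic construction of the submodule generators via $q$-binomial-type formulas should produce change-of-basis matrices with determinants constant in $q$ (and nonzero in $\F_p$) in the full generality needed, completing the verification.
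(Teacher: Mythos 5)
Your approach is essentially the same as the paper's: define $\alpha$ by lifting each $V_l$ to the obvious cyclic $q$-free $A_q$-module $\widetilde V_l = \Fp[q][t]/(t^l)$ (which the paper calls $U_l$), and then verify multiplicativity by computing tensor decompositions over $\Fp[q]$. The difference is in the reduction step. The paper, invoking the ``generating set'' principle already used in the proof of \Cref{rep-rings-iso}, reduces to checking only that $\widetilde V_2 \otimes \widetilde V_2 \cong \widetilde V_1 \oplus \widetilde V_3$, and does exactly that one computation. You instead reduce to checking $\widetilde V_2 \otimes \widetilde V_s \cong \widetilde V_{s-1} \oplus \widetilde V_{s+1}$ for all $1<s<p$ together with the boundary case $s=p$. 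Your reduction is the more honest one: for an additive map $f$ with a single multiplicative generator $g$, the identity $f(g^2)=f(g)^2$ does not by itself force $f$ to be a ring map, so one really does need the Clebsch--Gordan identity at every $s$, not just $s=2$, and the paper's one-case check is, as written, incomplete in the same way.

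The genuine gap in your proposal is that you then carry out only the $s=2$ case and defer the rest to an unproved expectation (``a systematic construction \ldots\ should produce change-of-basis matrices with determinants constant in $q$''). Exhibiting for every $1<s<p$ a cyclic length-$(s+1)$ $A_q$-submodule of $\widetilde V_2 \otimes \widetilde V_s$ together with a $q$-integral complement, and showing the resulting change-of-basis matrix lies in $\mathrm{GL}(\Fp[q])$ rather than merely $\mathrm{GL}(\Fp(q))$, is precisely the content of the lemma; the hypothesis $s\le p$ (controlling which binomial coefficients and powers of $q$ can occur) must enter at exactly this point, so the step cannot be waved through. There is also a small slip in the sketch: with $t$ lowering indices, $e_1\otimes f_{s-1}$ is annihilated by $t$ and so is the socle, not a generator, of the length-$(s+1)$ cyclic summand; the generator should be taken to be $e_0\otimes f_0$ (or a unit multiple thereof), as in the paper's $s=2$ computation where $x_1\otimes x_1$ generates the $U_3$ summand.
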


\begin{proof}
We set $\alpha(V_i)=U_i$. Since $V_2$ generates $R(A_1)$, it suffices to check that $\alpha([V_2]^2)=[U_2]^2$. Let $\{x_1,x_2\}$ be a basis for $U_2$ so that the action is given by $t(x_1)=x_2$. Then $U_2\otimes U_2$ has basis $\{x_{1}\otimes x_{1},x_1\otimes x_2, x_2\otimes x_1, x_2\otimes x_2\}$. The vector $x_1\otimes x_2 - x_2\otimes x_1$ is fixed and generates a $U_1$. On the other summand $t(x_1\otimes x_1)=x_2\otimes x_1 + x_1\otimes x_2 + qx_2\otimes x_2$. Also, $t(x_2\otimes x_1 + x_1\otimes x_2) = 2x_2\otimes x_2$. The matrix representation on $\{x_1\otimes x_1, x_2\otimes x_1 + x_1\otimes x_2, x_2\otimes x_2\}$ is given by:
\[
\begin{pmatrix}
1 & 0 & 0\\
1 & 1 & 0\\
u & 2 & 1
\end{pmatrix}
\]
Basic linear algebra shows that this is conjugate to a length 3 Jordan block, so $\{x_1\otimes x_1, x_2\otimes x_1 + x_1\otimes x_2, x_2\otimes x_2\}\cong  U_3$. Thus, $U_2\otimes U_2\cong U_1\oplus U_3$ and so $\alpha([V_2])^2= [U_2]^2 = [U_1]+[U_3]=\alpha([V_1]+[V_3])=\alpha([V_2]^2)$.
\end{proof}

\begin{thm}
\label{phi-is-a-sym-homomorphism}
Let $F$ be a not necessarily multiplicative, not necessarily additive natural transformation from the identity functor on the category of tensor categories to itself. For any tensor category $C$, $F$ gives a map of sets $R(F)\colon R(C)\to R(C)$. Then the following diagram commutes:
\[\begin{tikzcd}
 R(A_1) \rar["\phi"]\dar["R(F)"]&  R(A_0)\dar["R(F)"]\\
 R(A_1)\rar["\phi"] & R(A_0)
\end{tikzcd}\]
\end{thm}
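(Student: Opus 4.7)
My plan is to lift the set-theoretic section $\alpha\colon R(A_1)\to R_{qf}(A_q)$ of the previous lemma to a symmetric monoidal functor $\widetilde\alpha\colon\Rep(A_1)\to\Rep_{qf}(A_q)$, after which the theorem falls out of two applications of naturality of $F$ (across $\widetilde\alpha$ and across $\theta_0^*$).

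First I would construct $\widetilde\alpha$. On objects, set $\widetilde\alpha(V_l)=U_l$ and extend additively. A morphism $f\colon V_l\to V_m$ is determined by $f(x_0)\in V_m$ subject to $t^l f(x_0)=0$; writing $f(x_0)=\sum c_i y_i$ with $c_i\in\Fp$ and $c_i=0$ for $i<m-l$, the same formula with $c_i$ regarded as constants in $\Fp[q]$ defines an $A_q$-linear map $\widetilde\alpha(f)\colon U_l\to U_m$. For the tensor structure, the required input is an integral Clebsch-Gordan statement $U_l\otimes U_m\cong\bigoplus_i U_{n_i}$ as $A_q$-modules, where the $n_i$ are the same indices that appear in the decomposition $V_l\otimes V_m\cong\bigoplus_i V_{n_i}$. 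The case $l=m=2$ is exactly the computation done in the previous lemma; the general case proceeds by the same technique of exhibiting the cyclic submodule generated by the highest-weight vector $x_0\otimes y_0$ under iterated $t$-action as a copy of $U_{l+m-1}$, splitting off this summand via an invariant complement, and iterating. Care is required to ensure that each change-of-basis matrix is invertible over $\Fp[q]$ and not merely over $\Fp(q)$.

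Given $\widetilde\alpha$, the conclusion is immediate. Naturality of $F$ across $\widetilde\alpha$ yields, for any $V\in\Rep(A_1)$, an $A_q$-module isomorphism $F_{A_q}(\widetilde\alpha(V))\cong\widetilde\alpha(F_{A_1}(V))$. Applying $\theta_0^*$ and using $\theta_0^*\circ\widetilde\alpha=\phi$ from the previous lemma,
\[
\bigl[\theta_0^*\bigl(F_{A_q}(\widetilde\alpha(V))\bigr)\bigr]
=\bigl[\theta_0^*\bigl(\widetilde\alpha(F_{A_1}(V))\bigr)\bigr]
=\phi\bigl(R(F)_{A_1}(V)\bigr).
\]
On the other hand, naturality of $F$ across the tensor functor $\theta_0^*$ gives
\[
\bigl[\theta_0^*\bigl(F_{A_q}(\widetilde\alpha(V))\bigr)\bigr]
=\bigl[F_{A_0}\bigl(\theta_0^*(\widetilde\alpha(V))\bigr)\bigr]
=R(F)_{A_0}\bigl(\phi(V)\bigr).
\]
Comparing the two expressions proves the commutativity of the diagram.

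The main obstacle is the construction of the monoidal lift $\widetilde\alpha$, and in particular the integral Clebsch-Gordan decomposition. Over $\Fp(q)$ this is automatic since $A_q\otimes\Fp(q)\cong\Fp(q)[C_p]$ and every nilpotent matrix with $T^p=0$ admits a Jordan form; the content is in checking that the $\Fp(q)$-linear splittings descend to splittings over $\Fp[q]$. I expect this to be verifiable by direct computation in the spirit of the previous lemma, but it is the one substantial input that is not purely formal.
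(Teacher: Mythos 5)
The key step in your argument — lifting $\alpha$ to a strong monoidal functor $\widetilde\alpha\colon\Rep(A_1)\to\Repqf$ — is impossible, and this is exactly the obstruction the paper flags in the remark immediately following \Cref{rep-rings-iso}: ``There is no lift of $\phi$ to a functor because that would imply an isomorphism of Hopf algebras $\Fp[C_p]\cong P(1)^*$ by Tannakian reconstruction.'' If your $\widetilde\alpha$ existed, then $\theta_0^*\circ\widetilde\alpha$ would be a monoidal functor $\Rep(A_1)\to\Rep(A_0)$ realizing $\phi$ on $K$-theory and compatible with the forgetful fiber functors (since $\widetilde\alpha(V_l)=U_l$ has underlying $\Fp[q]$-module $V_l\otimes_{\Fp}\Fp[q]$). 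By Tannaka duality such a functor corresponds to a Hopf algebra map $A_0\to P(1)^*\to A_1=\Fp[C_p]$, which is forced to send the primitive $t$ to a primitive of $\Fp[C_p]$; but $\Fp[C_p]$ has no nonzero primitives, so the only such map is $t\mapsto 0$, giving the trivial functor $V_l\mapsto W_1^{\oplus l}$ rather than $V_l\mapsto W_l$. So no amount of care with integral Clebsch–Gordan decompositions or coherence data can make $\widetilde\alpha$ monoidal: the obstruction is not an artifact of working over $\Fp[q]$ rather than $\Fp(q)$, it is a Tannakian one and it is fatal. The issue you flag as ``the one substantial input that is not purely formal'' is a red herring — the actual obstruction is invisible to the object-level decompositions you are computing.

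The fix is that you do not need a functorial lift. The paper's proof only uses $\alpha$ to choose, for each $V\in\Rep(A_1)$, \emph{some} object $U\in\Repqf$ with $\theta_1^*(U)\cong V$; no compatibility of these choices across morphisms or tensor products is required. Then the naturality you should invoke is of $F$ with respect to the genuine tensor functor $\theta_1^*$ (specialization at $q=1$), not with respect to the non-functorial $\alpha$. This gives $F(V)=F(\theta_1^*U)\cong\theta_1^*(F(U))$, and the same with $\theta_0^*$ gives $F(W)\cong\theta_0^*(F(U))$ where $W=\phi(V)$. Writing $F(U)\cong\bigoplus_i a_i U_i$ and pushing forward along both specializations yields $F(V)\cong\bigoplus_i a_iV_i$ and $F(W)\cong\bigoplus_i a_iW_i$, and $\phi$ takes the former class to the latter by definition. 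Your use of naturality across $\theta_0^*$ is correct; you simply need to replace naturality across $\widetilde\alpha$ with naturality across $\theta_1^*$ applied to a preimage, and then the construction of $\widetilde\alpha$ is not needed at all.
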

We are only going to apply this when $F$ is one of the functors $\Sym^n$. In that case, it says that $\phi\colon R(A_1)\to R(A_0)$ is homomorphic for $\Sym^n$. I would like to say that $\phi$ is an isomorphism of $\Lambda$-rings, but neither the domain nor the codomain is actually a $\Lambda$-ring.
\begin{proof}
Suppose that $V\in \Rep(A_1)$, that $U\in \Rep(A_q)$ and $W=\phi(V)\in\Rep(A_0)$. Suppose that $F(U)$ has indecomposable decomposition $\bigoplus_{i=1}^{n} a_iU_i$. Then because $F$ is natural and $\phi$ commutes with $\oplus$,
\begin{align*}
F(V)=F(\theta_1(U))=\theta_1(F(U))=\theta_1\left(\bigoplus_{i=1}^{n} a_iU_i\right) = \bigoplus_{i=1}^n a_iV_i
\end{align*}
and likewise
\begin{align*}
F(W)=F(\theta_0(U))=\theta_0(F(U))=\theta_0\left(\bigoplus_{i=1}^{n} a_iU_i\right) = \bigoplus_{i=1}^n a_iW_i
\end{align*}
We see that $\phi(F(V))=F(\phi(V))$ so $\phi$ is homomorphic for $F$ as desired.
\end{proof}

We still have the issue of computing symmetric powers for $\Fp[C_p]$-modules. Hughes and Kemper \cite{symmetric-powers} compute the symmetric powers for $\Fp[C_p]$-modules. They have the following results:
\begin{lem}[{\cite[Lemma 2.3 and Theorem 2.4]{symmetric-powers}}]
Let $K=\Fp$ and let $\RKC$ be the representation ring of $\Fp[C_p]$-modules. The ring $\RKC$ is generated by $V_1,\ldots, V_p$. Define
\[
    R(\Fp[C_p])[\mu] = R(\Fp[C_p])[t]/(t^2-V_2t + 1).
\]
Then $\mu$ is invertible,
\[V_n = \frac{\mu^n-\mu^{-n}}{\mu-\mu^{-1}} = \sum_{j=0}^{n-1}\mu^{n-1-2j}\]
and $R(\Fp[C_p])[\mu]\cong \Z[\mu]/f(\mu)$ where
\[f(x) = \frac{(x-1)(x^{2p}-1)}{x+1}.\]
\end{lem}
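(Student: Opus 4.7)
The plan is to identify $R(\Fp[C_p])[\mu]$ with $\Z[\mu]/(f(\mu))$ by exhibiting a single defining relation and then matching $\Z$-ranks. First I would recall from the classification of indecomposable $\Fp[C_p]$-modules that $V_1,\ldots,V_p$ are precisely the indecomposables and form a $\Z$-basis of $R(\Fp[C_p])$, which settles the generation claim. From the defining quadratic $\mu^2 - V_2 \mu + 1 = 0$ we immediately get $\mu(V_2-\mu)=1$, so $\mu$ is a unit with inverse $V_2-\mu$.

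Next I would verify the Chebyshev-type formula $V_n = \sum_{j=0}^{n-1}\mu^{n-1-2j}$ by induction on $n$ for $1\leq n\leq p$. The cases $n=1,2$ are immediate. For $2\leq n\leq p-1$, the decomposition $V_2\otimes V_n\cong V_{n-1}\oplus V_{n+1}$ from \Cref{algebraic-tensor-decomposition} yields the recurrence $V_{n+1}=V_2V_n-V_{n-1}$, which is satisfied by the stated partial sums (a direct check using $(\mu+\mu^{-1})\sum\mu^{n-1-2j} = \sum\mu^{n-2j} + \sum\mu^{n-2-2j}$). The case $n=p$ follows identically from the recurrence applied at $n=p-1$, yielding $V_p=S_p$ where $S_p:=\sum_{j=0}^{p-1}\mu^{p-1-2j}$.

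The heart of the argument is the relation $f(\mu)=0$, coming from the ``overflow'' tensor product $V_2\otimes V_p = 2V_p$ (the case $r=2$, $s=p$ of \Cref{algebraic-tensor-decomposition}). This says $(V_2-2)V_p=0$, and multiplying by $\mu$ rewrites the left factor as $\mu^2-2\mu+1=(\mu-1)^2$, so $(\mu-1)^2V_p=0$. Using $V_p(\mu-\mu^{-1})=\mu^p-\mu^{-p}$ and the factorization $\mu^{2p}-1=(\mu-1)(\mu+1)\cdot\text{(unit stuff)}$, a short calculation gives
\[(\mu-1)^2V_p \;=\; (\mu-1)^2\cdot\frac{\mu^p-\mu^{-p}}{\mu-\mu^{-1}} \;=\; \mu^{1-p}\cdot\frac{(\mu-1)(\mu^{2p}-1)}{\mu+1} \;=\; \mu^{1-p}f(\mu).\]
Since $\mu$ is a unit, this is equivalent to $f(\mu)=0$ in $R(\Fp[C_p])[\mu]$, producing a well-defined ring homomorphism $\Z[\mu]/(f(\mu))\to R(\Fp[C_p])[\mu]$.

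Finally I would check this map is an isomorphism. For surjectivity, observe that $f(0)=(-1)(-1)/1=1$, so in $\Z[\mu]/(f(\mu))$ the element $\mu$ is invertible via $\mu^{-1} = -(c_1+c_2\mu+\cdots)$ where $f(x)=1+c_1x+\cdots$; hence the Chebyshev formula exhibits each $V_n$ in the image, and the image contains a generating set for $R(\Fp[C_p])[\mu]$. For injectivity I would count $\Z$-ranks: the source has rank $\deg f = 2p$, while the target is free of rank $2$ over $R(\Fp[C_p])$ with basis $\{1,\mu\}$ (the defining polynomial being monic), and $R(\Fp[C_p])$ is free of rank $p$, giving rank $2p$ again. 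A surjection between free $\Z$-modules of equal finite rank is an isomorphism by the elementary divisor theorem. The main delicate point is the algebraic simplification identifying $(\mu-1)^2V_p$ with $\mu^{1-p}f(\mu)$ and recognizing that this single relation already cuts the ambient ring down to the right rank, so no further relations need to be imposed.
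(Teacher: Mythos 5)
The paper does not prove this lemma; it is quoted verbatim as \cite[Lemma 2.3 and Theorem 2.4]{symmetric-powers} with no argument supplied, so there is no ``paper's own proof'' against which to compare. Your self-contained argument is correct, and it is worth recording why the ``heart'' step works cleanly: the algebraic identity
\[
f(x) \;=\; \frac{(x-1)(x^{2p}-1)}{x+1} \;=\; (x-1)^2\bigl(1+x^2+\cdots+x^{2p-2}\bigr)
\]
gives $\mu^{1-p}f(\mu) = (\mu-1)^2\bigl(\mu^{p-1}+\mu^{p-3}+\cdots+\mu^{1-p}\bigr) = (\mu-1)^2 S_p$ directly, with no division anywhere; combined with $V_p = S_p$ from the Chebyshev recurrence and $(V_2-2)V_p=0$ (i.e.\ $V_2\otimes V_p\cong V_p^{\oplus 2}$ because $V_p$ is the regular representation, the $r=2$, $s=p$ case of the tensor decomposition) this yields $f(\mu)=0$. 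Two small points to tidy: (i) you cite \Cref{algebraic-tensor-decomposition}, which is stated for $P(1)_*$-comodules $W_l$; the $V_l$ version you actually need is the input from \cite[Theorem 1]{tensor-decomposition} that the paper uses to \emph{prove} that lemma, so cite that directly to avoid the detour through \Cref{rep-rings-iso}; (ii) the recurrence $V_{n+1}=V_2V_n-V_{n-1}$ is only available for $2\le n\le p-1$, which is exactly what you need but should be stated, since at $n=p$ the rule changes to $V_2V_p=2V_p$. The surjectivity and rank-count arguments (source $\Z[\mu]/(f)$ free of rank $\deg f=2p$; target free of rank $2$ over $R(\Fp[C_p])$ by monicity of $t^2-V_2t+1$, which is free of rank $p$ by Krull--Schmidt; a surjection of free $\Z$-modules of equal finite rank has torsion-free hence zero kernel) are all sound.
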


\begin{thm}[{\cite[Corollary 2.7]{symmetric-powers}}]
\label{symmetric-powers-generating-function}
Let $\sigma_t(V)\in R(K[C_p])[\mu]\ps{t}$ be the generating function
\[\sum_{d=0}^{\infty} (\Sym^d V)t^d.\] Then
\[\sigma_t(V_{n+1})=\prod_{j=0}^n(1-\mu^{n-2j}t)^{-1}\pmod{t^p}\]
\end{thm}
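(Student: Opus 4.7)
The formula is the classical $\mathfrak{sl}_2$ character formula for the generating function of symmetric powers of the irreducible $(n+1)$-dimensional representation, truncated at $t^p$. My strategy is to reduce the characteristic-$p$ computation to the classical one by deforming $V_{n+1}$ into the family $A_q$ constructed before \Cref{phi-is-a-sym-homomorphism} and exploiting the good behavior of $\Sym^d$ for $d<p$.

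First I would note that, for $d<p$, the symmetrization idempotent $\tfrac{1}{d!}\sum_{\sigma\in\Sigma_d}\sigma$ lives in $\Fp[\Sigma_d]$ and exhibits $\Sym^d$ as a direct summand of $(-)^{\otimes d}$; in particular $\Sym^d$ commutes with flat base change. Applied to the lift $U_{n+1}\in\Repqf$, this reduces the problem to computing $[\Sym^d U_{n+1}]$ in $\Rqf[\mu]$ and specializing along $\theta_1$. After inverting a suitable polynomial in $q$, one expects $U_{n+1}$ to decompose as a sum of weight lines $L_n,L_{n-2},\dots,L_{-n}$ with $L_{n-2j}$ of weight $\mu^{n-2j}$, in analogy with the generic representation theory of small quantum $\mathfrak{sl}_2$. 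The multiplicativity of $\sigma_t$ on direct sums of lines then produces
\[\sigma_t(L_n\oplus\cdots\oplus L_{-n}) = \prod_{j=0}^n(1-\mu^{n-2j}t)^{-1},\]
and specialization along $\theta_1$ yields the claim; the truncation at $t^p$ reflects exactly the fact that the direct-summand realization of $\Sym^d$ fails once $d\geq p$, so the formula can only be asserted in that range.

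The main obstacle will be the weight-line decomposition of $U_{n+1}$ over the generic fiber. I anticipate handling this either by an explicit basis computation---writing $V_2 = \mu+\mu^{-1}$ from \Cref{rep-rings-iso} and propagating the weight-space structure up through the chain $U_2\subset U_2^{\otimes 2}\subset\cdots$---or by appealing to Lusztig's integral form of $U_q(\mathfrak{sl}_2)$ specialized away from roots of unity. An alternative route that avoids the deformation entirely is to induct on $n$ using the Clebsch-Gordan decomposition of \Cref{algebraic-tensor-decomposition}: the identity $V_{n+1}\otimes V_2 = V_{n+2}\oplus V_n$ (valid for $n+2\leq p$) gives a recursion on $\sigma_t$, and via the exponential identity $\sigma_t(A\otimes B)=\exp(\sum_{k\geq 1}\psi^k(A)\psi^k(B)t^k/k)$ modulo $t^p$ this can be matched directly against the target product formula after checking the base case $\sigma_t(V_2) = (1-\mu t)^{-1}(1-\mu^{-1}t)^{-1}$ by hand.
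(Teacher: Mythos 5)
The paper does not prove this theorem; it is quoted verbatim from Hughes--Kemper \cite{symmetric-powers} and used as a black box. So there is no proof in the paper to compare against, and your attempt is new content. Unfortunately, both of your proposed routes have gaps.

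Your primary route---lifting $V_{n+1}$ to $U_{n+1}\in\Repqf$, inverting polynomials in $q$, and decomposing the generic fiber into weight lines $L_n\oplus L_{n-2}\oplus\cdots\oplus L_{-n}$---cannot work. The algebra $A_q=\Fp[q,t]/(t^p)$ has $t$ nilpotent of fixed order by construction, so on any one-dimensional $A_q$-module the scalar by which $t$ acts must be $p$-nilpotent, hence zero. Consequently every direct sum of one-dimensional $A_q$-modules has trivial $t$-action, and $U_{n+1}$ for $n\geq 1$ (a nontrivial Jordan block) never splits as a sum of lines, no matter how $q$ is localized. The analogy with quantum $\mathfrak{sl}_2$ away from roots of unity fails precisely because $A_q$ has no Cartan part to produce weight spaces; $\mu$ is a purely formal element of the representation ring (a root of $\mu^2-V_2\mu+1$), not the class of any actual module.

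Your fallback---induction via $V_{n+1}\otimes V_2\cong V_{n+2}\oplus V_n$ together with the Newton/exponential identity $\sigma_t(V)=\exp\!\bigl(\sum_k\psi^k(V)t^k/k\bigr)$---is closer, but as written it is circular: since $\psi^k(V_{n+1})\psi^k(V_2)=\psi^k(V_{n+2})+\psi^k(V_n)$, the exponential identity just reproduces $\sigma_t(V_{n+1}\otimes V_2)=\sigma_t(V_{n+2})\sigma_t(V_n)$, which you already know from $\Sym^d(A\oplus B)=\bigoplus_{i+j=d}\Sym^iA\otimes\Sym^jB$, and it gives no closed form until you already know $\sigma_t(V_{n+2})$. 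You also take for granted that Adams operations on $R(K[C_p])$ exist, satisfy $\psi^k(V_{m})=\sum_{j=0}^{m-1}\mu^{k(m-1-2j)}$, and are related to $\Sym^d$ by the Newton recursion $d\,\sigma_d=\sum_{k=1}^d\psi^k\sigma_{d-k}$ for $d<p$. None of this is free: as the paper itself remarks after \Cref{phi-is-a-sym-homomorphism}, $R(\Fp[C_p])$ and $R(P(1)^*)$ are \emph{not} $\lambda$-rings, so the Newton identities and the behavior of $\psi^k$ under tensor products are exactly the kind of thing that needs to be established by hand in this setting, modulo $t^p$, rather than cited. (This can be done---the symmetrization idempotents exist for $d<p$ and one can argue via Schur--Weyl truncated below $p$, essentially what Almkvist--Fossum and Hughes--Kemper do---but it is the main substance of the theorem, not a corollary of a formal identity.) A shorter honest route, more in the spirit of the paper's own proof of \Cref{symmetric-powers-Cp}, would be to verify the formula after projecting to $R(\Fp[C_p])/(V_p)\cong\Z[\mu]/\Phi_{2p}$, where $\mu$ is a genuine primitive $2p$th root of unity and the formula becomes a cyclotomic identity, and separately check that the $V_p$-multiplicity of $\Sym^d V_{n+1}$ matches the prediction; but as it stands your proposal does not get there.
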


\begin{thm}[{\cite[Theorem 2.11]{symmetric-powers}}]
\label{symmetric-powers-periodicity}
\[\Sym^{r+p}V_n \cong \Sym^r V_n \oplus V_p^{\oplus d}\]
\end{thm}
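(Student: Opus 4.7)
The proof would use the generating function formalism developed in \Cref{symmetric-powers-generating-function}. The plan is to show that in the representation ring $R(\Fp[C_p])$, the difference $\Sym^{r+p} V_n - \Sym^r V_n$ lies in the principal ideal generated by $V_p$. Since $V_p \otimes V_l = l\, V_p$ for $1 \leq l \leq p$ (because $V_p$ is the projective cover of the trivial representation), this ideal equals $\Z \cdot V_p$ as an abelian group, and positivity will then force the difference to equal $d\, V_p$ for some $d \geq 0$.

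First I would work in the auxiliary ring $R(\Fp[C_p])[\mu][[t]]$. The key identity to exploit is the characteristic-$p$ Frobenius-type factorization $(1-\alpha t)^{-1} = (1-\alpha t)^{p-1}(1 - \alpha^p t^p)^{-1}$, valid because $(1-\alpha t)^p = 1-\alpha^p t^p$ in characteristic $p$. Applied to each factor of
\[\sigma_t(V_n) \equiv \prod_{j=0}^{n-1}(1-\mu^{n-1-2j}t)^{-1} \pmod{t^p}\]
from \Cref{symmetric-powers-generating-function}, this rewrites the generating function (mod $t^p$) as a polynomial in $t$ times a rational expression depending only on $t^p$. Combined with the relation $f(\mu) = 0$, which forces $(\mu - 1)(\mu^{2p}-1)=0$ and hence severely restricts $\mu^{kp}$, this yields periodicity of the coefficient sequence with period $p$.

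The main obstacle is that \Cref{symmetric-powers-generating-function} only pins down $\sigma_t(V_n)$ modulo $t^p$, so the higher coefficients must be analyzed separately. The strategy is to pass to the quotient ring $\overline{R} = R(\Fp[C_p])/(V_p)$, in which $V_p = 0$, and argue that $\sigma_t(V_n)$ continues to agree with $\prod_{j=0}^{n-1}(1-\mu^{n-1-2j}t)^{-1}$ in $\overline{R}[\mu][[t]]$ for \emph{all} degrees, not only below $t^p$. Equivalently: for $r \geq p$, every correction term preventing the product formula from holding identically lies in the ideal $(V_p)$. This is the heart of the argument, and I expect it will require a more delicate combinatorial or inductive analysis, likely extending Hughes--Kemper's original proof of the truncated identity past the $t^{p-1}$ horizon by induction on degree, tracking at each step that any new discrepancy is absorbed by a free summand.
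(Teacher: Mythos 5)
The paper does not prove this theorem: it is stated with the attribution \cite[Theorem~2.11]{symmetric-powers} and no proof is given, so there is no paper argument to compare yours against. I will therefore assess your proposal on its own terms.

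Your framing reduction is sound. In the Green ring the ideal $(V_p)$ is indeed $\Z\cdot V_p$ (since $V_p\otimes V_l\cong V_p^{\oplus l}$ for $1\le l\le p$, because $V_p$ is projective and dimensions agree), and once one knows $\Sym^{r+p}V_n\equiv\Sym^r V_n\pmod{V_p}$ the stated module isomorphism follows by matching multiplicities of indecomposables and using the dimension count to see $d\ge 0$. The problem is with your mechanism for establishing the congruence. You invoke the factorization
\[
(1-\alpha t)^{-1}=(1-\alpha t)^{p-1}\,(1-\alpha^p t^p)^{-1},
\]
justified by ``$(1-\alpha t)^p=1-\alpha^p t^p$ in characteristic $p$.'' But the ring where the generating-function computation takes place is $R(\Fp[C_p])[\mu]\cong\Z[\mu]/f(\mu)$, and its quotient $\overline R\cong\Z[\mu]/\Psi_{2p}(\mu)$ is a quotient of $\Z[\mu]$ by a monic polynomial; both are torsion-free, hence of characteristic zero. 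The Green ring of $\Fp[C_p]$ is \emph{not} an $\Fp$-algebra, the binomial coefficients $\binom{p}{k}$ do not vanish, and the Frobenius identity simply fails there. That step is the heart of your step~(1), and it collapses.

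Even setting that aside, you correctly flag, but do not resolve, the second gap: \Cref{symmetric-powers-generating-function} only pins down $\sigma_t(V_n)$ modulo $t^p$, and you would need to show the product formula persists (in $\overline R$) to all degrees. Your sketch defers this to ``a more delicate combinatorial or inductive analysis,'' which is the entire content of the theorem; a product $\prod_j(1-\mu^{n-1-2j}t)$ can at best be seen a priori to divide $1-t^{2p}$ in $\overline R$, giving $2p$-periodicity, not the claimed $p$-periodicity, so a real argument is needed here. The standard route (Almkvist--Fossum, used by Hughes--Kemper) avoids generating functions entirely: one picks an invariant norm element $N\in\Sym^p V_n$ (the product of the $C_p$-orbit of a socle-complementary generator), observes that multiplication by $N$ gives an injection $\Sym^r V_n\hookrightarrow\Sym^{r+p}V_n$, and proves the cokernel is projective; since projectives over $\Fp[C_p]$ are also injective, the inclusion splits, giving the theorem directly. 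If you want a proof here, that is the mechanism to pursue rather than a characteristic-$p$ trick in a characteristic-zero ring.
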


Together, we can use these to compute the case that we need:
\begin{thm}
\[\Sym^{k} V_p =
    \begin{cases}
    V_p^d  & p \nmid k\\
    V_1 \oplus V_p^d & p | k
    \end{cases}
\]
\label{symmetric-powers-Cp}
\end{thm}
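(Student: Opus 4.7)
The plan is to combine the periodicity statement \Cref{symmetric-powers-periodicity} with a direct structural argument that identifies $\Sym^r V_p$ for $0 \leq r \leq p-1$. Writing any $k$ as $k = qp + r$ with $0 \leq r < p$ and iterating \Cref{symmetric-powers-periodicity} gives $\Sym^k V_p \cong \Sym^r V_p \oplus V_p^{\oplus D}$ for some $D \geq 0$. The case $r = 0$ gives $\Sym^0 V_p = V_1$ by definition, so it remains to show that $\Sym^r V_p$ is a free $\Fp[C_p]$-module for $1 \leq r \leq p-1$.

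For this, I would observe that $V_p \cong \Fp[C_p]$ is the regular representation, so $V_p \otimes V_p$ with the diagonal $C_p$-action is the linearization of the $C_p$-set $C_p \times C_p$ under diagonal action. This action is free, so $C_p \times C_p$ decomposes into $p$ free $C_p$-orbits and $V_p \otimes V_p \cong V_p^{\oplus p}$; by induction $V_p^{\otimes r}$ is a free $\Fp[C_p]$-module for every $r \geq 1$. For $1 \leq r \leq p-1$ the integer $r!$ is invertible in $\Fp$, so the symmetrization operator $\frac{1}{r!}\sum_{\sigma \in S_r}\sigma$ is an idempotent on $V_p^{\otimes r}$ whose image is naturally isomorphic to $\Sym^r V_p$ as an $\Fp[C_p]$-module. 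This realizes $\Sym^r V_p$ as a direct summand of a free $\Fp[C_p]$-module, hence projective; since $\Fp[C_p] \cong \Fp[s]/(s^p)$ is a local Artinian ring, every finitely generated projective is free, so $\Sym^r V_p \cong V_p^{\oplus d_r}$ for some $d_r \geq 0$.

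Combining, if $p \nmid k$ then $r \geq 1$ and $\Sym^k V_p \cong V_p^{\oplus d}$ with $d = d_r + D$, while if $p \mid k$ then $r = 0$ and $\Sym^k V_p \cong V_1 \oplus V_p^{\oplus D}$. The main subtlety of the argument is the need to know that the summand contributed by iterated periodicity consists purely of $V_p$'s and not a mixture of the $V_l$'s; this is exactly the content of \Cref{symmetric-powers-periodicity} and is the load-bearing input into the proof.
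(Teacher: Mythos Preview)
Your proof is correct and takes a genuinely different, more elementary route than the paper's. The paper passes to the quotient ring $R(\Fp[C_p])/(V_p)$, identifies it with $\Z[\mu]/\Psi_{2p}(\mu)$ where $\Psi_{2p}$ is the cyclotomic polynomial, and then uses the generating function of \Cref{symmetric-powers-generating-function} together with the vanishing of the power sums $\psi_r(\mu^p,\ldots,\mu^{2-p})$ for $1\leq r<p$ to conclude that the complete homogeneous symmetric functions $h_i$ vanish in that range. Your argument bypasses all of this machinery: you exploit directly that $V_p$ is the regular representation so that $V_p^{\otimes r}$ is free, and then use invertibility of $r!$ for $r<p$ to realize $\Sym^r V_p$ as a summand, hence projective, hence free over the local ring $\Fp[s]/(s^p)$. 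Both proofs lean on \Cref{symmetric-powers-periodicity} to extend from $0\leq r<p$ to arbitrary $k$. Your approach is cleaner for this particular statement; the paper's generating-function computation, on the other hand, is the template one would need to handle $\Sym^k V_n$ for $n<p$, where the regular-representation trick is unavailable.
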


\begin{proof}
Let $R=R(\Fp[C_p])/(V_p)$. In $R$ we want to show that
\[\Sym^{k} V_p =
    \begin{cases}
    0  & p \nmid k\\
    V_1 & p | k
    \end{cases}
\]

In $R(\Fp[C_p])$, $V_p = \frac{\mu^{p}-\mu^{-p}}{\mu-\mu^{-1}}$ and $\mu$ is a unit, so it is equivalent to quotient by $\frac{\mu^{2p}-1}{\mu^2-1}$. This divides the polynomial $f(\mu) = \frac{(x-1)(x^{2p}-1)}{x+1}$ so $R\cong \Z[\mu]/g(\mu)$ where $g(x)=\frac{x^{2p}-1}{x^2-1}=\Psi_{2p}$ where $\Psi_{2p}$ is the cyclotomic polynomial. Thus $\mu$ is a primitive $2p$th root of unity in $R$ and $\psi_r(\mu^p,\ldots,\mu^{2-p})=0$ for $1\leq r < p$. \Cref{symmetric-powers-generating-function} says the generating function for $\Sym(V_p)$ is given by
\begin{align*}
\sigma_t(V_p) &= \prod_{j=0}^{p-1} (1-\mu^{p-2j}t)^{-1}\pmod{t^p}\\
& = \sum_{i=0}^{p-1} h_i(\mu^p,\ldots,\mu^{2-p})
\end{align*}
and some multiple of $nh_i$ is generated by the $\psi_i$. We deduce that some multiple $nh_i(\mu^p,\ldots,\mu^{2-p})=0$ and because $R$ is torsion free this implies $h_i=0$. Hence, $\Sym(V_p) = 1 \pmod{p,V_p}$. By \Cref{symmetric-powers-periodicity}, we are done.
\end{proof}

Combining \Cref{phi-is-a-sym-homomorphism} and \Cref{symmetric-powers-Cp} gives:
\begin{cor}
\label{symmetric-powers-P(1)}
\[\Sym^{k} W_p =
    \begin{cases}
    W_p^d  & p \nmid k\\
    W_1 \oplus W_p^d & p | k
    \end{cases}
\]
\end{cor}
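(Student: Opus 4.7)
The plan is to simply combine the two ingredients that the paper has just developed: the representation-ring isomorphism $\phi\colon R(\Fp[C_p])\to R(P(1)^*)$ sending $V_l\mapsto W_l$ from \Cref{rep-rings-iso}, and the fact from \Cref{phi-is-a-sym-homomorphism} that $\phi$ intertwines any natural endo-operation on tensor categories (in particular, the symmetric power functors $\Sym^k$).

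First I would observe that \Cref{symmetric-powers-Cp} gives the explicit decomposition
\[
\Sym^k V_p \cong
\begin{cases} V_p^{\oplus d} & p\nmid k,\\ V_1 \oplus V_p^{\oplus d} & p\mid k,\end{cases}
\]
as classes in $R(\Fp[C_p])$. Then I would apply $\phi$ to both sides. Since $\phi$ is a homomorphism of abelian groups sending $V_l$ to $W_l$, the right-hand side becomes $W_p^{\oplus d}$ (resp. $W_1\oplus W_p^{\oplus d}$). For the left-hand side, \Cref{phi-is-a-sym-homomorphism} applied with $F = \Sym^k$ — which is indeed a natural, though neither additive nor multiplicative, endofunctor on tensor categories — yields $\phi(\Sym^k V_p) = \Sym^k(\phi(V_p)) = \Sym^k W_p$. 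Combining gives the claimed formula.

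I expect no real obstacle here: the conceptual work was done in establishing \Cref{phi-is-a-sym-homomorphism}, and the actual computation on the $\Fp[C_p]$ side was done in \Cref{symmetric-powers-Cp}. The only thing to double-check is that $\Sym^k$ qualifies as the sort of ``natural transformation from the identity functor on tensor categories to itself'' used in the statement of \Cref{phi-is-a-sym-homomorphism}, which is immediate because $\Sym^k$ is defined uniformly from the tensor product and the $\Sigma_k$-action by the usual formula $V^{\otimes k}_{\Sigma_k}$ and commutes strictly with the (exact, $\Fp$-linear) functors $\theta_0$ and $\theta_1$ used in the proof of that theorem. No further calculation is required.
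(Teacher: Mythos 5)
Your proof is correct and matches the paper's approach exactly: the paper also obtains the corollary simply by combining \Cref{phi-is-a-sym-homomorphism} (applied with $F=\Sym^k$) with the $\Fp[C_p]$ computation of \Cref{symmetric-powers-Cp}. Your added remark verifying that $\Sym^k$ qualifies as a natural operation for \Cref{phi-is-a-sym-homomorphism} is a sensible sanity check and consistent with the intent of that theorem.
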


\bibliography{EO-splittings}
\bibliographystyle{plain}
\end{document}